\newtheorem{Th}{Theorem}[section]
\newtheorem{Prop}[Th]{Proposition}
\newtheorem{Lem}[Th]{Lemma}
\newtheorem{Cor}[Th]{Corollary}
\newtheorem{Rem}[Th]{Remark}
\newtheorem{defn}[Th]{Definition}
\newcommand{\N}{\mathbb{N}}
\newcommand{\R}{\mathbb{R}}
\newcommand{\Rn}{\mathbb{R}^{n}}
\newcommand{\Z}{\mathbb{Z}}
\renewcommand{\d}{\partial}
\renewcommand{\SS}{{\mathscr{S}}}
\def\bra#1{{\langle{#1}\rangle}}
\def\la{\langle}
\def\ra{\rangle}
\newcommand{\eps}{\varepsilon}
\newcommand{\dd}{\, \mathrm{d}}
\newcommand{\ddd}{\,\text{\rm{\mbox{\dj}}}}
\newcommand{\at}{\mathfrak{a}}
\newcommand{\brkt}[1]{\Big({#1}\Big)}
\newcommand{\set}[1]{\Big\{{#1}\Big\}}
\newcommand{\norm}[1]{\Big\Vert#1\Big\Vert}
\newcommand{\jap}[1]{\left\langle {#1}\right \rangle }
\newcommand{\abs}[1]{\left |{#1}\right |}
\newcommand{\m}[1]{\begin{equation*}
\begin{split}
#1
\end{split}
\end{equation*}}
\newcommand{\nm}[2]{\begin{equation}\label{#1}
\begin{split}
#2
\end{split}
\end{equation}}
\DeclareMathOperator{\supp}{supp}
\title[On the $L^p$-boundedness of general FIOs]
{Regularity of Fourier integral operators with amplitudes in general H\"ormander classes}
\author[A. J. Castro]{Alejandro J. Castro}
\author[A. Israelsson]{Anders Israelsson}
\author[W. Staubach]{Wolfgang Staubach}
\address{\newline
       Alejandro J. Castro \newline
       Department of  Mathematics, Nazarbayev University, \newline
		010000 Nur-Sultan, Kazakhstan}
\email{alejandro.castilla@nu.edu.kz}
\address{\newline
       Anders Israelsson, Wolfgang Staubach \newline
       Department of  Mathematics, Uppsala University, \newline
       S-751 06 Uppsala, Sweden}
       \email{Anders.Israelsson@math.uu.se, wulf@math.uu.se}
 \thanks{
 The first author is supported by the Nazarbayev University Faculty Development Competitive Research Grants Program, grant number 110119FD4544.
 During the initial stage of preparation of the paper, the third author was partially supported by a grant from the Crafoord foundation.}
 \keywords{Fourier integral operators, Hyperbolic PDEs, H\"ormander classes}
 \subjclass[2010]{Primary: 42B20, 47D06, Secondary: 35S30, 35L05}
\begin{document}


\maketitle

\begin{abstract}
We prove the global $L^p$-boundedness of Fourier integral operators that model the parametrices for hyperbolic  partial differential equations, with amplitudes in classical H\"ormander classes $S^{m}_{\rho, \delta}(\R^n)$ for parameters $0<\rho\leq 1$, $0\leq \delta<1$. We also consider the regularity of operators with amplitudes in the exotic class $S^{m}_{0, \delta}(\R^n)$, $0\leq \delta< 1$ and the forbidden class $S^{m}_{\rho, 1}(\R^n)$, $0\leq\rho\leq 1.$   Furthermore we show that despite the failure of the $L^2$-boundedness of operators with amplitudes in the forbidden class $S^{0}_{1, 1}(\R^n)$, the operators in question are bounded on Sobolev spaces $H^s(\R^n)$ with $s>0.$ This result extends those of Y. Meyer and E. M. Stein to the setting of Fourier integral operators.
\end{abstract}

\hspace*{0.75cm}\\
\tableofcontents


\section{Introduction}

In this paper we investigate the local and global  regularity of Fourier integral operators (FIOs) of the form
\begin{equation*}
	T_a^\varphi f(x) = \frac{1}{(2\pi)^n} \int_{\R^n} e^{i\varphi(x,\xi)}\,a(x,\xi)\,\widehat f (\xi) \dd\xi,
	\end{equation*}
with amplitudes in H\"ormander classes $S^m_{\rho, \delta}(\R^n)$ consisting of functions in $\mathcal{C}^{\infty}(\R^n \times \R^n)$ satisfying \begin{equation*}
	\left|\partial_{\xi}^{\alpha}\partial_{x}^{\beta} a(x,\xi)\right |
	\leq C_{\alpha , \beta} (1+|\xi|)^{m-\rho|\alpha|+\delta|\beta|},
\end{equation*}	
for $0\leq\rho\leq 1$, $0\leq \delta\leq 1$. More specifically we  consider the boundedness in $L^p(\R^n)$ and $H^s(\R^n)$ (Sobolev spaces) for FIOs that model the parametrices of variable coefficient wave equations where the $\mathrm{rank}\,(\partial^2_{\xi \xi}\varphi(x,\xi))=n-1$. The corresponding investigation for the other extreme case, i.e. $\mathrm{rank}\,(\partial^2_{\xi \xi}\varphi(x,\xi))=0$ which is the pseudodifferential operator-case, was carried out by J. Alvarez, and J. Hounie in \cite{AH}. In that paper the authors consider the $L^p$-boundedness of pseudodifferential operators with symbols in $S^m_{\rho, \delta}(\Rn)$ where $0<\rho\leq 1$, $0\leq \delta < 1$. In this paper we also consider the case of operators with exotic amplitudes i.e. those with amplitudes in $S^m_{0,\delta}(\R^n)$, $0\leq \delta<1$ (which was missing in \cite{AH}) and thereby complete the picture regarding the $L^p$-boundedness of pseudodifferential operators. The case of $\rho=\delta=0$ was treated (using methods that are different than ours) by R. Coifman and Y. Meyer in {\cite{CM}}, see also \cite{Stein}.\\

Prior to this investigation, the only source for results regarding $L^p$-regularity of FIOs in $S^m_{\rho, \delta}$-classes were those by A. Seeger. C. Sogge and E. M. Stein \cite{SSS}, where the authors established the local $L^p$-boundedness for $\rho\in [1/2,1]$ and $\delta=1-\rho.$ \\

Regarding global $L^p$-boundedness, the results of M. Ruzhansky and M. Sugimoto \cite{RuSu} are global extensions of those of Seeger-Sogge-Stein, however they are confined to the amplitudes with $\rho=1$ and $\delta=0$.\\

If one goes outside the aforementioned H\"ormander classes of operators, then global boundedness results have been proven in various settings for example in the papers by S. Coriasco and M. Ruzhansky {\cite{CR1}}, E. Cordero, F. Nicola and L. Rodino \cite{CNR1,CNR2}. Recently, A. Hassell, P. Portal and J. Rozendaal \cite{HPR} obtained results regarding global boundedness of Fourier integral operators, that go beyond those in \cite{RuSu}. More precisely in \cite{HPR} the authors also establish the regularity of FIOs with amplitudes that decay faster than those in $S^m_{\rho, 1-\rho}(\R^n)$ (with $\rho\in [1/2,1]$), when differentiated in the radial direction in the frequency variables. In \cite{DS} D. Dos Santos Ferreira and W. Staubach considered amplitudes in very rough classes (that also contain all the H\"ormander classes $S^m_{\rho, \delta}(\R^n)$), and proved global $L^p$-boundedness of corresponding FIOs. However, due to the roughness of the amplitudes, the order $m$ (which depends on $\rho$ and $\delta$) is not as good as the expected one for smooth amplitudes, and further work is needed to achieve the right order of decay required for the $L^p$-boundedness of, for example, FIOs that yield parametrices for variable coefficient wave equations.\\

As one of the justifications of this investigation, we would like to mention that the work of R. Melrose and M. Taylor \cite{MT}, and also the study of FIOs on certain nilpotent Lie groups (other than the Heisenberg group) motivates the consideration of FIOs with amplitudes in H\"ormander classes $S^m_{1/3, 2/3}(\R^n)$, for which, so far, no $L^p$-boundedness results have been available. Regarding $L^2$-boundedness of operators with general H\"ormander-class amplitudes, in \cite{DS} Dos Santos Ferreira and Staubach showed that $T^\varphi_a$ is globally $L^2$-bounded, provided that $\rho, \delta\in [0,1]$, $\delta\neq 1$ and $m=\min(0,n(\rho-\delta)/2)$, or $\rho\in [0,1],$ $\delta=1$ and $m<n(\rho-1)/2$. This result is  sharp. In this paper we also discuss the global $L^p$-boundedness of FIOs with forbidden amplitudes $S^m_{\rho,1}(\R^n)$, $0\leq \rho\leq 1$. For instance, it was shown in \cite[Theorem 2.17]{DS} that FIOs with strongly non-degenerate phase functions and amplitudes in $S^m_{1,1}(\R^n)$ are $L^p$-bounded if and only if $m<-(n-1)|1/p-1/2|,$ a result which parallels the well-known facts about pseudodifferential operators with forbidden symbols. However, the endpoint case of $S^0_{1,1}(\Rn)$, which is not covered by the results above, is of particular interest. Indeed as Y. Meyer {\cite{Meyer}} and E. Stein (unpublished) have shown, despite the lack of, say $L^2$-boundedness ({\cite[Prop. 2, p. 272]{Stein}}), the pseudodifferential operators with symbols in $S^0_{1,1}(\Rn)$ map Sobolev spaces $H^s(\R^n)$ with $s>0$ continuously to themselves. This remarkable fact has had a large impact on the applications of J. M. Bony's paradifferential calculus {\cite{Bony}} to a systematic study of various nonlinear partial differential equations, see \cite{Taylor} for a comprehensive presentation. In this paper we will also prove that FIOs with amplitudes in $S^0_{1,1}(\R^n)$ yield bounded operators on Sobolev spaces with positive exponents.\\

The paper is organised as follows; in Section \ref{prelim} we recall some definitions, facts and results from microlocal and harmonic analysis that will be used throughout the paper. In Section \ref{RS globalisation} we reduce the FIOs to a form that is amenable for Ruzhansky-Sugimoto's globalisation technique, which will in turn be adapted to general classes of H\"ormander-class amplitudes. In Section \ref{Composition FIOs} we first prove a general composition formula for the left-action of a Fourier multiplier on an FIO with amplitude in general H\"ormander classes. Our result extends the known results to the global setting and all values of $\rho, \delta$ (although the case of $\delta=1$ has to be excluded). Thereafter, in Section \ref{SSS decomposition}, we extend the method of Seeger-Sogge-Stein to the case of FIOs with general classical H\"ormander-class amplitudes, and decompose the Fourier integral operators into certain pieces for which we establish the basic kernel estimates. In Section \ref{Lp section} we prove our main $L^p$-boundedness theorems for FIOs with amplitudes in the  exotic, classical and forbidden H\"ormander classes. Finally in Section \ref{Sobolev bounedness} we prove the $H^s$-boundedness for FIOs with amplitudes in the forbidden class $S^0_{1,1}(\R^n)$ for $s>0$ and thereby extend the result of Meyer and Stein to the FIO-setting. Indeed we produce a result in a more general class of amplitudes $C^{r}_{*}S^0_{1,1}(\Rn)$ with contains the class $S^0_{1,1}(\R^n)$.

\section{Preliminaries}\label{prelim}

As is common practice, we will denote positive constants in the inequalities by $C$, which can be determined by known parameters in a given situation but whose
value is not crucial to the problem at hand. Such parameters in this paper would be, for example, $m$, $p$,  $s$, $n$,  and the constants connected to the seminorms of various amplitudes or phase functions. The value of $C$ may differ
from line to line, but in each instance could be estimated if necessary. We also write $a\lesssim b$ as shorthand for $a\leq Cb$ and moreover will use the notation $a\approx b$ if $a\lesssim b$ and $b\lesssim a$.\\
\begin{defn}\label{def:LP}
Let $\psi_0 \in \mathcal C_c^\infty(\R^n)$ be equal to $1$ on $B(0,1)$ and have its support in $B(0,2)$. Then let
$$\psi_j(\xi) := \psi_0 \left (2^{-j}\xi \right )-\psi_0 \left (2^{-(j-1)}\xi \right ),$$
where $j\geq 1$ is an integer and $\psi(\xi) := \psi_1(\xi)$. Then $\psi_j(\xi) = \psi\left (2^{-(j-1)}\xi \right )$ and one has the following Littlewood-Paley partition of unity
\m{
\sum_{j=0}^\infty \psi_j(\xi) = 1, \quad \text{\emph{for all }}\xi\in\R^n .
}

\noindent It is sometimes also useful to define a sequence of smooth and compactly supported functions $\Psi_j$ with $\Psi_j=1$ on the support of $\psi_j$ and $\Psi_j=0$ outside a slightly larger compact set. One could for instance set
\m{
\Psi_j := \psi_{j+1}+\psi_j+\psi_{j-1},
}
with $\psi_{-1}:=\psi_{0}$.
\end{defn} \hspace*{1cm}\\
In what follows we define the Littlewood-Paley operators by
\[
    \psi_j(D)\, f(x)= \int_{\R^n}   \psi_j(\xi)\,\widehat{f}(\xi)\,e^{ix\cdot\xi}\, \ddd\xi,
\]
where $\ddd \xi$ denotes the normalised Lebesgue measure ${\dd \xi}/{(2\pi)^n}$ and\[
	\widehat{f}(\xi)=\int_{\R^n} e^{-i x\cdot\xi}\,f(x) \dd x,
\]
is the Fourier transform of $f$.
\noindent Using the Littlewood-Paley decomposition of Definition \ref{def:LP}, we define the \emph{Sobolev space} $H^s(\Rn)$ in a somewhat unusual way. One can however show that this is equivalent to the standard definition of $H^s(\Rn)$.
\begin{defn}\label{def:Sobolev}
	Let $s \in {\mathbb R}$. The Sobolev space is defined by
	\[
	{H}^s(\R^n)
	:=
	\Big\{
	f \in {\mathscr{S}'}(\R^n) \,:\,
	\|f\|_{{H}^s(\R^n)}
	:=
	\Big(
	\sum_{j=0}^\infty
	4^{js}\|\psi_j(D)f\|^{2}_{L^2(\R^n)}
	\Big)^{1/2}<\infty
	\Big\},
	\]
where $\mathscr{S}'(\R^n)$ denotes the space of tempered distributions.	
\end{defn}

\begin{Rem}\label{rem:Sobolev}
Different choices of the basis $\{\psi_j\}_{j=0}^\infty$ give equivalent norms of $H^s(\R^n)$ in \emph{Definition \ref{def:Sobolev},} see e.g. \cite{Trie83}. We will use either $\{\psi_j\}_{j=0}^\infty$ or  $\{\Psi_j\}_{j=0}^\infty$ to define the norm of $H^s(\R^n)$.
\end{Rem}
\begin{Rem}\label{rem:sobolevnormer}
By Fubini's theorem, one can change the order of the norms in \emph{Definition \ref{def:Sobolev}}, i.e.
\m{\|f\|_{{H}^s(\R^n)}
\approx
\Big\|\Big\{\sum_{j=0}^{\infty} 4^{j s}\left|\psi_j(D) f\right|^{2}\Big\}^{1 / 2}\Big\|_{L^{2}(\R^n)}
\approx	
\Big(
	\sum_{j=0}^\infty
	4^{j s}\|\psi_j(D) f\|^{2}_{L^2(\R^n)}
	\Big)^{1/2} .
	}
\end{Rem}
	
Also, using fairly standard Littlewood-Paley theory one can show the following well-known result:
\begin{Lem}\label{lem:generalisedTL}
Let $\{f_j\}_{j=0}^\infty \subset \SS'(\Rn)$  be such that
$$\supp \widehat f_j \subseteq \{ \xi\in\Rn: \abs\xi \lesssim 2^{j}\}, \quad j\geq0.$$
Then, for $s > 0$, one has
\m{
\norm{\sum_{j=0}^\infty f_j}_{H^s(\Rn)}
\lesssim
\norm{ \set{\sum_{j=0}^\infty 4^{js}\abs{f_j}^2}^{1/2}}_{L^2(\Rn)},
}
\end{Lem}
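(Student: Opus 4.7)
By Remark \ref{rem:sobolevnormer} and Fubini, the right-hand side equals $\big(\sum_{j=0}^\infty 4^{js} \|f_j\|_{L^2(\Rn)}^2\big)^{1/2}$. Likewise, Definition \ref{def:Sobolev} applied with the basis $\{\psi_k\}$ expresses $\|F\|_{H^s(\Rn)}^2 = \sum_{k=0}^\infty 4^{ks} \|\psi_k(D) F\|_{L^2(\Rn)}^2$ for $F := \sum_{j=0}^\infty f_j$, so the claim reduces to an almost-orthogonal estimate between the sequence indices $j$ and $k$.

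The first observation is that since $\widehat{f_j}$ is supported in a ball of radius $\lesssim 2^j$, whereas $\psi_k$ for $k\geq 1$ lives in an annulus $\{|\xi|\approx 2^k\}$, there exists a fixed integer $N_0$ such that $\psi_k(D) f_j = 0$ whenever $k > j + N_0$. Consequently,
\[
\psi_k(D) F = \sum_{j \geq k - N_0} \psi_k(D) f_j.
\]
The second ingredient is Plancherel together with $|\psi_k|\leq 1$, which yields $\|\psi_k(D) f_j\|_{L^2(\Rn)} \leq \|f_j\|_{L^2(\Rn)}$. Combined with the triangle inequality, this produces
\[
2^{ks}\, \|\psi_k(D) F\|_{L^2(\Rn)} \;\leq\; \sum_{j \geq k - N_0} 2^{(k-j)s} \cdot 2^{js}\, \|f_j\|_{L^2(\Rn)} .
\]

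Setting $b_j := 2^{js}\|f_j\|_{L^2(\Rn)}$ (extended by zero for $j<0$) and $c_\ell := 2^{\ell s}\, \1_{\{\ell \leq N_0\}}$, the inequality above reads as a convolution $(c\ast b)(k)$ on $\Z$. Young's convolution inequality $\ell^1 \ast \ell^2 \hookrightarrow \ell^2$ then yields the desired bound, with constant $\|c\|_{\ell^1(\Z)} = \sum_{\ell \leq N_0} 2^{\ell s}$. The only point that deserves attention is that the hypothesis $s>0$ enters exactly to ensure the convergence of this geometric tail as $\ell\to-\infty$; at $s=0$ the estimate would fail outright, reflecting the well-known absence of a Littlewood--Paley characterisation of $L^\infty$. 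No genuine obstacle is present beyond this bookkeeping.
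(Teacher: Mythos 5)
Your proof is correct: the key points are all in place --- the vanishing $\psi_k(D)f_j=0$ for $k>j+N_0$ forced by the ball-versus-annulus support comparison, Plancherel to get $\|\psi_k(D)f_j\|_{L^2}\leq \|f_j\|_{L^2}$, and Young's inequality $\ell^1\ast\ell^2\hookrightarrow\ell^2$ on $\Z$, where the hypothesis $s>0$ enters precisely to make $\sum_{\ell\leq N_0}2^{\ell s}$ finite. The paper gives no proof of its own but refers to \cite{Taylor}, and your argument is essentially the standard one carried out there, so there is nothing to add beyond the (harmless) observation that your aside about $L^\infty$ is tangential and that finiteness of the right-hand side already guarantees $L^2$-convergence of $\sum_j f_j$, justifying the term-by-term application of $\psi_k(D)$.
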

For a proof, see e.g. \cite{Taylor}.\\

In proving the $L^p$-boundedness of FIOs ($1<p<\infty$), the standard procedure is to first show the boundedness of the operator (and its adjoint) from the Hardy space $\mathscr{H}^1(\R^n)$ to $L^1(\R^n)$ and thereafter interpolate the results with the $L^2$-boundedness. In proving the Hardy space boundedness, the main tool is to use the so-called Hardy space atoms.
\begin{defn}\label{def:Hpatom}
Let $p\in (0,1].$ A function $\at$ is called an $\mathscr{H}^p$-atom if for some $x_0\in \R^n$ and $r>0$ the following three conditions are satisfied:
\begin{enumerate}
\item[$i)$] $\supp \at\subset B(x_{0}, r)$,
\item[$ii)$] $ |\at(x)|\leq|B(x_{0}, r)|^{-1/p},$
\item[$iii)$] \smallskip $ \int_{\R^n} x^{\alpha}\, \at(x)\dd x=0$ for all $|\alpha|\leq N$ for some $N\geq  n(1/p-1)$.
\end{enumerate}
Then a distribution $f\in \mathscr{H}^p (\R^n)$, has an atomic decomposition
$$f=\sum_{j=0}^\infty\lambda_{j}\at_{j},$$
where the $\lambda_{j}$ are constants such that
$$ \sum_{j=0}^\infty|\lambda_{j}|^p
\approx
\|f\|^p_{\mathscr{H}^p(\mathbb{R}^{n})}$$
and the $\at_{j}$ are $\mathscr{H}^p$-atoms.
\end{defn}
\begin{Rem}
Different choices of $N$ in $iii)$ above give equivalent definitions of the $\mathscr{H}^p$-norm.
\end{Rem}

Next we define the building blocks of the FIOs and the pseudodifferential operators. These are the amplitudes (symbols in the pseudodifferential setting) and the phase functions. The class of amplitudes considered in this paper were first introduced by L. H\"ormander in \cite{Hor1}.
\begin{defn}\label{symbol class Sm}
Let $m\in \R$ and $\rho, \delta \in [0,1]$. An \textit{amplitude} \emph{(}symbol\emph{)} $a(x,\xi)$ in the class $S^m_{\rho,\delta}(\R^n)$ is a function $a\in \mathcal{C}^\infty (\R ^n\times \R ^n)$ that verifies the estimate
\m{
\left |\partial_\xi^\alpha \partial_x^\beta a(x,\xi) \right |\lesssim \langle\xi\rangle ^{m-\rho|\alpha|+\delta\abs\beta},
}
for all multi-indices $\alpha$ and $\beta$ and $(x,\xi)\in \R ^n\times \R ^n$, where
$\langle\xi\rangle:= (1+|\xi|^2)^{1/2}.$
We shall henceforth refer to $m$ as the order of the amplitude. Following the folklore in harmonic and microlocal analysis, we shall refer to the class $S_{0,\delta}^m(\Rn)$ as the exotic class and to $S_{\rho,1}^m(\Rn)$ as the forbidden class of amplitudes.
\end{defn}
{Towards the end of this paper, in connection with the amplitudes with low spatial regularity and also the forbidden amplitudes, we will use the \emph{Zygmund class} $C_{*}^{r} (\R^n)$ whose definition we now recall.
\begin{defn}\label{def:Zygmund}
	Let $r \in \R$. The Zygmund class is defined by
	\[
	C_{*}^{r} (\R^n)
	:=
	\Big\{
	f \in {\mathscr{S}'}(\R^n) \,:\,
	\|f\|_{C_{*}^{r} (\R^n)}
	:=
	\sup_{j \geq 0}2^{jr}\|\psi_j(D)f\|_{L^\infty(\R^n)}
	<\infty\Big\}.
	\]
\end{defn}
If $C^r(\R^n)$, $r\in \R_+$, denotes the H\"older space, and $\mathcal{C}^r(\R^n)$ denotes the space of continuous functions with continuous derivatives of orders up to and including $r$, then one also has that
\begin{equation}\label{Zygmundegenskap}
   C_{*}^{r}(\R^n) =C^r(\R^n)  \quad \text{for}\,\,\, r\in \R_+ \setminus \Z_+ \quad \text{and}\quad  \mathcal{C}^r(\R^n) \subset C_{*}^{r}(\R^n) \quad \text{for}\, r\in \Z_+.
\end{equation}
}
\noindent In connection to the definition of the Zygmund class, there is another class of amplitudes which have low regularity in the $x$-variable, which
were considered by G. Bourdaud in {\cite{Bourd}}.

\begin{defn}\label{symbolclass ZygSm}
\noindent Let $m\in \R$, $0\leq \delta\leq 1$ and $r>0$. An \textit{amplitude} \emph{(}symbol\emph{)} $a(x,\xi)$ is in the class $C_{*}^{r} S_{1, \delta}^{m}(\R^n)$  if it is  $\mathcal{C}^\infty ( \R ^n)$ in the $\xi$ variable and verifies the estimates
\m{
\| \partial_\xi^\alpha a(\cdot,\xi) \|_{L^\infty(\R^n)}\lesssim \langle\xi\rangle ^{m-|\alpha|},
}
and
\m{
\| \partial_\xi^\alpha a(\cdot,\xi)\|_{C_{*}^{r}(\R^n)}\lesssim \langle\xi\rangle ^{m-|\alpha|+\delta r},
}
for all multi-indices $\alpha$ and $\xi\in \R ^n$. Here $C_{*}^{r}(\R^n)$ is the Zygmund class of \emph{Definition \ref{def:Zygmund}}.

\end{defn}
It is important to note that  $S^m_{1,1} (\R^n)\subset C_{*}^{r} S_{1,1}^{m}(\R^n),$ for all $r>0$, which follows from \eqref{Zygmundegenskap}.\\

\noindent Given the symbol classes defined above, one associates to the symbol its \textit{Kohn-Nirenberg quantisation }as follows:
\begin{defn}
Let $a$ be a symbol. Define a pseudodifferential operator \emph{(}$\Psi\mathrm{DO}$ for short\emph{)} as the operator
\begin{equation*}
a(x,D)f(x) := \int_{\R^n}e^{ix\cdot\xi}\,a(x,\xi)\,\widehat{f}(\xi) \ddd \xi,
\end{equation*}
a priori defined on the Schwartz class $\mathscr{S}(\R^n).$
\end{defn}\hspace*{1cm}\\
\noindent In order the define the Fourier integral operators that are studied in this paper, following \cite{DS}, we also define the classes of phase functions.
\begin{defn}\label{def:phi2}
A \textit{phase function} $\varphi(x,\xi)$ in the class $\Phi^k$ is a function \linebreak$\varphi(x,\xi)\in \mathcal{C}^{\infty}(\R^n \times\R^n \setminus\{0\})$, positively homogeneous of degree one in the frequency variable $\xi$ satisfying the following estimate

\begin{equation}\label{C_alpha}
	\sup_{(x,\,\xi) \in \R^n \times\R^n \setminus\{0\}}  |\xi| ^{-1+\vert \alpha\vert}\left | \partial_{\xi}^{\alpha}\partial_{x}^{\beta}\varphi(x,\xi)\right |
	\leq C_{\alpha , \beta},
	\end{equation}
	for any pair of multi-indices $\alpha$ and $\beta$, satisfying $|\alpha|+|\beta|\geq k.$
    In this paper we will mainly use  phases in class $\Phi^2$ and $\Phi^1$.
\end{defn}\hspace*{1cm}

We will also need to consider phase functions that satisfy  certain {\em non-degeneracy conditions}. These conditions have to be adapted to the case of local and global boundedness in an appropriate way. Following \cite{SSS}, in connection to the investigation of the local results, that is, under the assumption that the $x$-support of the amplitude $a(x,\xi)$ lies within a fixed compact set $\mathcal{K}$, the non-degeneracy condition is formulated as follows:
\begin{defn}\label{nondegeneracy}
Let $\mathcal{K}$ be a fixed compact subset of $\R^n$. One says that the phase function $\varphi(x,\xi) $  satisfies the non-degeneracy condition if
\begin{equation*}
	\det \brkt{\partial^{2}_{x_{j}\xi_{k}}\varphi(x,\xi)}\neq 0,\qquad \mbox{for all $(x,\xi)\in \mathcal{K}\times \R^n\setminus\{0\}$}.
\end{equation*}
\end{defn}\hspace*{1cm}\\
\noindent Following the approaches in e.g.  \cite{DS, RLS, RuSu}, for the global $L^p$-boundedness  results that were established in those papers, we also define the following somewhat stronger notion of non-degeneracy:
\begin{defn}
One says that the phase function $\varphi(x,\xi)$ satisfies the strong non-degeneracy condition \emph{(}or $\varphi$ is $\mathrm{SND}$ for short\emph{)} if
\begin{equation}\label{eq:SND}
	\Big |\det \brkt{\partial^{2}_{x_{j}\xi_{k}}\varphi(x,\xi)} \Big |
	\geq \delta,\qquad \mbox{for  some $\delta>0$ and all $(x,\xi)\in \mathbb{R}^{n} \times \R^n\setminus\{0\}$}.
\end{equation}
\end{defn}
\hspace*{1cm}\\
 Having the definitions of the amplitudes and the phase functions at hand, one has
\begin{defn}\label{def:FIO}
	A Fourier integral operator  \emph($\mathrm{FIO}$ for short\emph) $T_a^\varphi$ with amplitude $a$ and phase function $\varphi$, is an operator defined \emph{(}once again a-priori on $\mathscr{S}(\R^n)$\emph{)} by
	\begin{equation}\label{eq:FIO}
	T_a^\varphi f(x) := \int_{\R^n} e^{i\varphi(x,\xi)}\,a(x,\xi)\,\widehat f (\xi) \ddd\xi,
	\end{equation}
	where $\varphi(x,\xi)\in\mathcal{C}^{\infty}(\R^n \times \R^n\setminus\{0\})$ and is positively homogeneous of degree one in $\xi$.
\end{defn}

In this paper, the basic $L^2$-boundedness result which we shall utilise for the FIOs, is the following proposition which could be found in \cite{DS} as Theorems 2.2 and 2.7.

\begin{Prop}\label{basicL2}
Let  $\rho, \delta\in [0,1]$, $\delta\neq 1$. Assume that $a(x,\xi)\in S^{m}_{\rho,\delta}(\R^n)$ and $\varphi(x,\xi)$ is in the class $\Phi^2$ and is \emph{SND}. Then the \emph{FIO} $T_a^\varphi$ is bounded on $L^2(\R^n)$ if and only if $m=-n \, \max(0,(\delta-\rho)/2)$. In case $\rho\in [0,1],$ $\delta=1$ then the $L^2$-boundedness is valid if and only if $m<n(\rho-1)/2$.
\end{Prop}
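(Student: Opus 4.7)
The natural strategy is the $TT^*$ method combined with a dyadic frequency decomposition. Setting $T := T_a^\varphi$, we first compute
\[
TT^* g(x) = \int_{\R^n} K(x,y)\, g(y)\dd y, \qquad K(x,y) = \int_{\R^n} e^{i(\varphi(x,\xi)-\varphi(y,\xi))}\, a(x,\xi)\overline{a(y,\xi)}\,\ddd\xi.
\]
Writing $\varphi(x,\xi)-\varphi(y,\xi) = (x-y)\cdot h(x,y,\xi)$ with $h(x,y,\xi) := \int_0^1 \partial_x\varphi(y+t(x-y),\xi)\dd t$, the SND assumption together with $\varphi\in\Phi^2$ shows that, for $|x-y|$ sufficiently small, the map $\xi\mapsto h(x,y,\xi)$ is a global $\mathcal{C}^\infty$ diffeomorphism of $\R^n\setminus\{0\}$ with Jacobian uniformly bounded below. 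The change of variable $\eta = h(x,y,\xi)$ then rewrites the principal part of $TT^*$ as a pseudodifferential operator with symbol $b\in S^{2m}_{\rho,\delta}(\R^n)$; the region $|x-y|\gtrsim 1$ is controlled by non-stationary phase in $\xi$, which produces kernel decay $\langle x-y\rangle^{-N}$ for arbitrary $N$ and hence a Schur-bounded residual.

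For $\delta\le\rho$ and $\delta<1$, the resulting $\Psi\mathrm{DO}$ lies in $S^{0}_{\rho,\delta}(\R^n)$ when $m=0$, so the Calder\'on--Vaillancourt theorem yields $L^2$-boundedness of $TT^*$, whence of $T$. When $\delta>\rho$ (still $\delta<1$), the $TT^*$ reduction alone is not sharp, and one Littlewood--Paley-decomposes $a = \sum_j \psi_j(\xi)\,a(x,\xi)$. For each frequency-localised piece, rescaling $\xi\mapsto 2^j\xi$ reduces the problem to an FIO with smooth, compactly supported amplitude whose seminorms are $\lesssim 2^{j(m - \rho|\alpha| + \delta|\beta|)}$; applying the $TT^*$ argument at this scale gives the individual bound $\|T_{a_j}^\varphi\|_{L^2\to L^2}\lesssim 2^{j(m + n(\delta-\rho)/2)}$. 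A Cotlar--Stein type almost-orthogonality estimate on dyadic scales, based on frequency separation, then permits summation in $j$ and forces $m\le -n(\delta-\rho)/2$. The forbidden case $\delta=1$ uses the same dyadic scheme, but $x$-derivatives of $a$ now cost the full factor $\langle\xi\rangle$, yielding the single-scale bound $2^{j(m+n(1-\rho)/2)}$; absolute summability in $j$ enforces the strict inequality $m<n(\rho-1)/2$.

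Necessity of each threshold is obtained by testing $T$ on normalised oscillatory wave packets whose $L^2$ mass concentrates at frequency scale $2^N$, paired with model amplitudes $a(x,\xi) = \chi(x)\,\psi(2^{-N}\xi)\,\langle\xi\rangle^{m_0}$ saturating the symbol seminorms at that scale; a direct stationary-phase computation then shows that $m_0$ cannot be improved. The principal technical obstacle is verifying the symbol-class membership of $b$ uniformly in $(\rho,\delta)$ after the $TT^*$ reduction --- in particular, ensuring that the change of variable $\eta = h(x,y,\xi)$ respects the $x$-derivative estimates of $a$ globally in $x$ (where $\delta$ enters), and that the residual from $|x-y|\gtrsim 1$ does not degrade the sharp order. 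The SND hypothesis is indispensable throughout for the global diffeomorphism property.
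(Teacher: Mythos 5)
First, note that the paper does not actually prove this proposition: it is quoted verbatim from \cite{DS} (Theorems 2.2 and 2.7), so you are attempting to reprove a cited result. Your sketch, however, has gaps that are fatal in the stated generality. The central one is the claim that after the substitution $\eta=h(x,y,\xi)$ the operator $TT^*$ becomes a $\Psi$DO with symbol in $S^{2m}_{\rho,\delta}(\R^n)$. The inverse map $\xi=\xi(x,y,\eta)$ is homogeneous of degree one, so $\partial_x\xi=O(|\eta|)$, and each $x$- or $y$-derivative of the transplanted amplitude $a(x,\xi(x,y,\eta))\,\overline{a(y,\xi(x,y,\eta))}$ falling on the $\xi$-slot costs $\langle\eta\rangle^{1-\rho}$ rather than $\langle\eta\rangle^{\delta}$; the composed symbol therefore lies in $S^{2m}_{\rho,\delta}$ only when $\delta\ge 1-\rho$. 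In the regime $\delta<1-\rho$ --- which includes $\rho=\delta=0$, $m=0$, covered by the proposition and used later in the paper for the exotic class --- the class is not preserved and Calder\'on--Vaillancourt cannot be invoked. The treatment of $|x-y|\gtrsim 1$ has the same defect: integration by parts in $\xi$ gains $|x-y|^{-1}$ per step but only $\langle\xi\rangle^{-\rho}$ from the amplitude, so for $\rho=0$ (and $m=0$) the kernel has no absolutely convergent off-diagonal representation and the ``Schur-bounded residual'' claim fails; indeed Calder\'on--Vaillancourt for $S^0_{0,0}$ is itself proved by Cotlar--Stein on a space--frequency lattice, not by kernel decay. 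Your dyadic Cotlar--Stein step for $\delta>\rho$ is likewise only asserted: the needed $\|T_jT_{j'}^*\|$ decay in $|j-j'|$ requires the same careful use of the $\Phi^2$/SND structure (in \cite{DS} this is done after first reducing the phase to the form $x\cdot\xi+\theta(x,\xi)$, $\theta\in\Phi^1$, as in Section \ref{RS globalisation} of this paper), and your single-scale $TT^*$ bound inherits the chain-rule problem above.

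The necessity part is also not correct as written. The model amplitudes $\chi(x)\,\psi(2^{-N}\xi)\,\langle\xi\rangle^{m_0}$ belong to $S^{m_0}_{1,0}(\R^n)$ uniformly in $N$, so testing them can never detect the $(\delta-\rho)$-dependent threshold $-n(\delta-\rho)/2$, nor the strict inequality at $\delta=1$; moreover, failure of a uniform single-scale bound does not by itself exhibit one unbounded operator in the class. The known sharpness examples (already for $\varphi(x,\xi)=x\cdot\xi$, cf.\ \cite[Prop.~2, p.~272]{Stein} and the references in \cite{DS}) are lacunary, Ching/H\"ormander-type sums such as $\sum_k 2^{k m}e^{-i2^{k}x\cdot e_1}\phi(2^{-k}\xi)$, whose $x$-derivatives genuinely exhibit the $2^{k\delta}$-growth, and unboundedness is shown by testing against a single function adapted to all scales simultaneously. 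If you want a self-contained argument you would essentially have to reproduce the proof in \cite{DS}; otherwise the honest route --- the one the paper takes --- is to cite \cite{DS} directly.
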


A global result concerning the boundedness of FIOs  with amplitudes of order zero, which will be used in the proof of Theorem \ref{thm:sobolev}  goes as follows:
\begin{Lem}\label{sobolevtheorem}
Let $a (x,\xi)\in S_{1,0}^{0}(\R^n)$. Assume also that $\varphi(x,\xi) \in \Phi^2,$ is $\mathrm{SND}$. Then for $s\in \R$, the \emph{FIO} $T_a^\varphi$ is bounded from the Sobolev space $H^{s}(\R^n)$ to $H^{s}(\R^n)$.
\end{Lem}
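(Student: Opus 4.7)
The plan is to reduce the $H^s \to H^s$ boundedness of $T_a^\varphi$ to the $L^2 \to L^2$ boundedness provided by Proposition \ref{basicL2}, via conjugation by the Bessel potential $\langle D\rangle^s$ together with the composition formula for a Fourier multiplier acting on an FIO (which is the content of Section \ref{Composition FIOs}). The main substantive ingredient is that composition formula; the rest is essentially bookkeeping of orders.

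First I would pass to the equivalent characterisation of the Sobolev norm, namely $\|f\|_{H^s(\R^n)} \approx \|\langle D\rangle^{s} f\|_{L^{2}(\R^n)}$, which follows from Definition \ref{def:Sobolev} and Remark \ref{rem:Sobolev}. Writing $f = \langle D\rangle^{-s}g$ with $g \in L^2$, the target estimate $\|T_a^\varphi f\|_{H^s} \lesssim \|f\|_{H^s}$ becomes the $L^2$-boundedness of the conjugated operator
\m{
S := \langle D\rangle^{s}\, T_a^{\varphi}\, \langle D\rangle^{-s}.
}

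Next I would dispose of the right factor by a direct calculation. Since $\langle D\rangle^{-s}$ is a Fourier multiplier,
\m{
T_a^{\varphi}\, \langle D\rangle^{-s} f(x) = \int_{\R^n} e^{i\varphi(x,\xi)}\, a(x,\xi)\,\langle \xi\rangle^{-s}\, \widehat{f}(\xi)\, \ddd\xi,
}
which is again an FIO with the same phase $\varphi$ and new amplitude $a(x,\xi)\langle \xi\rangle^{-s}$. Because $\langle \xi\rangle^{-s}$ depends only on $\xi$ and lies in $S^{-s}_{1,0}(\R^n)$, and $a \in S^{0}_{1,0}(\R^n)$, the product is in $S^{-s}_{1,0}(\R^n)$.

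Then I would apply the left-action composition formula from Section \ref{Composition FIOs} to the remaining left factor: composing the Fourier multiplier $\langle D\rangle^{s}$ (symbol in $S^{s}_{1,0}(\R^n)$) with the FIO just obtained (phase $\varphi \in \Phi^2$ which is SND, amplitude in $S^{-s}_{1,0}(\R^n)$) produces an FIO $T_b^{\varphi}$ whose amplitude $b$ belongs to $S^{0}_{1,0}(\R^n)$, plus a smoothing remainder that is automatically bounded on every $L^p$ space. Since $\rho=1$, $\delta=0$ satisfy $\delta \neq 1$, this is exactly the range in which the composition formula of that section applies. Finally, because $-n\max(0,(0-1)/2)=0$, Proposition \ref{basicL2} yields $L^2(\R^n)$-boundedness of $T_b^\varphi$, hence of $S$, which is equivalent to the claimed $H^s \to H^s$ boundedness of $T_a^\varphi$.

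The only delicate step is the composition formula itself and the careful tracking of orders and the remainder; this is precisely what Section \ref{Composition FIOs} develops in the full range $\rho,\delta \in [0,1]$ with $\delta \neq 1$. The present lemma uses it in the most favourable subcase $\rho=1, \delta=0$, so no additional technicalities intervene beyond those already dealt with in that section.
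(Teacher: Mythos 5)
Your route is genuinely different from the paper's: the paper disposes of this lemma in one line by citing \cite[Theorem 5.7 (ii)]{IRS} (the Besov--Lipschitz result with $p=q=2$), whereas you give an argument internal to the present paper, conjugating by Bessel potentials and reducing to Proposition \ref{basicL2} via the composition result of Theorem \ref{thm: main composition}. The conjugation scheme itself is sound, and in fact it is the same mechanism the paper uses elsewhere (see the proof of \eqref{eq:L2toLqadj} in Lemma \ref{lemma:HLS-lemma}, where $(1-\Delta)^{-m_1/2}T^\varphi_a$ is handled exactly this way); the order bookkeeping $S^{s}_{1,0}\circ S^{-s}_{1,0}\Rightarrow S^{0}_{1,0}$ and the final appeal to Proposition \ref{basicL2} with $m=-n\max(0,(\delta-\rho)/2)=0$ are correct. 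One small inaccuracy: Theorem \ref{thm: main composition} produces a single FIO with amplitude in $S^{m+m'}_{\rho,\delta}(\R^n)$ and no separate ``smoothing remainder''; that phrasing is harmless but does not match the statement you are invoking.

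The genuine gap is the low-frequency part. Theorem \ref{thm: main composition} requires the amplitude of the FIO to be supported in $\R^n\times\{|\xi|>1\}$, precisely because $\varphi$ is only smooth away from $\xi=0$ (this is why Lemma \ref{lemma:HLS-lemma} explicitly assumes the amplitude vanishes near $\xi=0$). So before applying it you must split off $a(x,\xi)\psi_0(\xi)$ and prove $H^s$-boundedness of that piece separately, and this is not covered by what you cite: Remark \ref{low freq rem} (via \cite[Theorem 1.18]{DS}) only gives $L^p$-boundedness of the low-frequency portion, which settles $s=0$ but not $H^s\to H^s$ for general $s\in\R$, since $T^\varphi_{a\psi_0}f$ is not frequency-localised and the singularity of $\varphi$ at $\xi=0$ (together with the fact that $\Phi^2$ controls no derivatives of total order $<2$, so e.g.\ $\nabla_x\varphi$ may grow in $x$) blocks the naive argument of differentiating under the integral or conjugating again by $\langle D\rangle^{s}$. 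This low-frequency Sobolev estimate is exactly part of what the cited result \cite{IRS} supplies; as written, your proof needs either that input or a separate kernel argument for the compactly $\xi$-supported piece.
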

\begin{proof}
This follows immediately from \cite[Theorem 5.7 part (ii)]{IRS}, by noting that the Besov-Lipschitz space $B^s_{p,q}(\R^n)$ in that result reduces to the Sobolev space $H^s(\R^n)$ when $p=q=2$.
\end{proof}

\noindent We also state the following version of the non-stationary phase lemma, whose proof can be found in \cite[Lemma 3.2]{RLS}.
\begin{Lem}\label{lem:non-stationary}
Let $\mathcal{K}\subset \mathbb{R}^n$ be a compact set and  $\Omega \supset \mathcal{K}$ an open set. Assume that $\Phi$ is a real valued function in $\mathcal C^{\infty}(\Omega )$ such that $|\nabla \Phi|>0$ and
$$|\d^\alpha \Phi| \lesssim |\nabla \Phi|,$$
for all multi-indices $\alpha$ with $|\alpha|\geq 1$.
Then, for any $F\in \mathcal C^\infty_c (\mathcal{K})$ and any integer $k\geq 0$,
\begin{equation*}
	 \Big| \int_{\R ^n} F(\xi)\, e^{i\Phi(\xi)}\ddd \xi \Big|
     \leq C_{k,n,\mathcal{K}} \sum_{|\alpha| \leq k} \int_\mathcal{K} |\d^{\alpha} F(\xi)| \, |\nabla \Phi(\xi)|^{-k}\ddd \xi.
\end{equation*}
\end{Lem}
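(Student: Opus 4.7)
The plan is to establish this oscillatory integral estimate via iterated integration by parts, using the non-vanishing of $\nabla\Phi$ to manufacture artificial decay. I would introduce the first-order differential operator
\begin{equation*}
L := \frac{1}{i\,|\nabla\Phi|^{2}}\sum_{j=1}^{n}(\partial_{j}\Phi)\,\partial_{j},
\end{equation*}
which is smooth on $\Omega$ thanks to $|\nabla\Phi|>0$ and satisfies $L[e^{i\Phi}] = e^{i\Phi}$ pointwise. Since $F\in\mathcal{C}_{c}^{\infty}(\mathcal{K})$, no boundary terms arise, so $k$ successive integrations by parts yield
\begin{equation*}
\int_{\R^{n}} F(\xi)\,e^{i\Phi(\xi)}\,\ddd\xi \;=\; \int_{\R^{n}} (L^{t})^{k}F(\xi)\, e^{i\Phi(\xi)}\,\ddd\xi,
\end{equation*}
where $L^{t}$ denotes the formal transpose of $L$. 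Since $|e^{i\Phi}|=1$, the task reduces to producing a pointwise bound for $(L^{t})^{k}F$ on $\mathcal{K}$.

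The structural claim, which I would prove by induction on $k$, is that $(L^{t})^{k}F$ is a finite linear combination of terms of the shape
\begin{equation*}
\frac{(\partial^{\beta_{1}}\Phi)\cdots(\partial^{\beta_{N}}\Phi)}{|\nabla\Phi|^{2M}}\,\partial^{\alpha}F,
\end{equation*}
where $|\alpha|\leq k$, each $|\beta_{i}|\geq 1$, and the numerology $2M-N=k$ holds. The base case $k=0$ is trivial. For the inductive step, one writes $L^{t} = -\tfrac{1}{i}\sum_{m}\partial_{m}\circ \tfrac{\partial_{m}\Phi}{|\nabla\Phi|^{2}}$: multiplication by $\partial_{m}\Phi/|\nabla\Phi|^{2}$ raises $N$ and $M$ each by one, hence raises $2M-N$ by one, while the subsequent $\partial_{m}$ either falls on $\partial^{\alpha}F$ (raising $|\alpha|$ by one), on an existing $\partial^{\beta_{i}}\Phi$ (raising $|\beta_{i}|$ by one), or on the denominator $|\nabla\Phi|^{2M}$ (producing an additional factor $(\partial_{\ell}\Phi)(\partial_{m}\partial_{\ell}\Phi)$ and one more power of $|\nabla\Phi|^{2}$, so $N$ goes up by two and $M$ by one, preserving $2M-N$). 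In each branch the invariant $2M-N=k$ is maintained and $|\alpha|$ grows by at most one.

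With this structure in hand, the hypothesis $|\partial^{\beta}\Phi|\lesssim|\nabla\Phi|$ for every $|\beta|\geq 1$ bounds the numerator by a constant times $|\nabla\Phi|^{N}$, and each summand is therefore dominated pointwise by
\begin{equation*}
|\nabla\Phi|^{N-2M}\,|\partial^{\alpha}F| \;=\; |\nabla\Phi|^{-k}\,|\partial^{\alpha}F|.
\end{equation*}
Summing the finitely many terms and integrating over $\mathcal{K}$ delivers the desired inequality, with $C_{k,n,\mathcal{K}}$ absorbing the combinatorial constants from the induction, the normalising factor from $\ddd\xi$, and the dependence on the fixed compact set.

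The main obstacle I expect is the combinatorial bookkeeping of the induction: one must verify that the invariant $2M-N=k$ is preserved under every branch of the product rule and that $|\alpha|$ stays bounded by $k$. The hypothesis $|\partial^{\beta}\Phi|\lesssim|\nabla\Phi|$ is exactly what closes this accounting, since otherwise higher-order derivatives of $\Phi$ could exceed $|\nabla\Phi|$ and destroy the pointwise gain.
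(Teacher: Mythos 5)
Your proposal is correct: the operator $L$ reproduces $e^{i\Phi}$, the compact support of $F$ kills all boundary terms, and your induction on the structure of $(L^{t})^{k}F$ with the invariant $2M-N=k$ is exactly what makes the hypothesis $|\partial^{\beta}\Phi|\lesssim|\nabla\Phi|$ yield the factor $|\nabla\Phi|^{-k}$. The paper does not prove the lemma itself but refers to \cite[Lemma 3.2]{RLS}, whose argument is this same standard integration-by-parts scheme, so your route coincides with the cited one.
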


\noindent Finally we recall a composition result, whose proof can be found in \cite[Theorem 4.2]{RRS}, or in a more general setting in \cite[Theorem 3.11]{CISY}, which will enable us to keep track of the parameter while a parameter-dependent $\Psi$DO acts from the left on a parameter-dependent FIO.
This will be crucial in the proof of the boundedness of FIOs with forbidden amplitudes on Sobolev spaces.

\begin{Prop}\label{thm:monsteriosity}
Let $m \leq 0$, $\displaystyle 0<\varepsilon <1/2$ and $\Omega := \R^n \times \{|\xi| > 1\}$. Suppose that $ a_t(x, \xi)\in S^m_{1,0} (\R^n)$ uniformly in $t \in (0, 1]$ and it is supported in $\Omega$, $b(\xi)\in S^0_{1,0}(\R^n)$ and $\varphi \in \mathcal C^\infty (\Omega)$ is such that
\begin{enumerate}
\item[\emph{(i)}]for constants $C_1, C_2 > 0$, $C_1|\xi| \leq |\nabla_x \varphi(x, \xi)| \leq C_2|\xi|$ for all $(x, \xi) \in \Omega$, and
\item[\emph{(ii)}]for all $|\alpha|, |\beta| \geq 1$, $|\partial_x^\alpha \varphi(x, \xi)|\lesssim  \la \xi \ra$ and $|\partial_\xi^\alpha \partial _x^\beta \varphi (x, \xi)| \lesssim  |\xi|^{1-|\alpha|}$, for all $(x, \xi) \in \Omega$.
\end{enumerate}

Consider the parameter dependent Fourier integral operator $T_{a_t}^\varphi$, given by \eqref{eq:FIO} with amplitude $a_t(x,\xi)$, and the parameter dependent Fourier multiplier
\begin{equation*}
b(tD)f(x) := \int_{\R^n} e^{ix\cdot \xi}\,b(t\xi)\,\widehat f(\xi) \ddd \xi.
\end{equation*}
Then the composition $b (tD)T_{a_t}^\varphi$ is also an \emph{FIO} with phase $\varphi$ and amplitude $\sigma_t$ which is given by
\begin{equation*}
\sigma_t(x, \xi) := \iint_{\R^{n}\times \R^n} a_t(y, \xi)\,b (t\eta)\,e^{i(x-y)\cdot \eta+i\varphi(y,\xi)-i\varphi(x,\xi)} \ddd\eta \dd y.
\end{equation*}
Moreover, for each $M \geq 1$, we can write $\sigma_t$ as
\begin{equation*}
\sigma_t(x, \xi) = b (t \nabla_x \varphi(x, \xi))\,a_t(x, \xi) + \sum_{0<|\alpha|<M} \frac{t^{|\alpha|}}{\alpha!}
 \sigma_\alpha (t, x, \xi) + t^{M\varepsilon} r(t, x, \xi),
\end{equation*}
for $t \in (0, 1)$. Moreover, for all multi-indices $\beta, \gamma$ one has
\begin{equation*}
\sup_{t\in(0,1)} \abs{\partial ^\gamma_\xi \partial_x^\beta \sigma_\alpha(t, x, \xi)   t^{|\alpha|(1-\varepsilon)}} \lesssim \la\xi\ra^{m-|\alpha|\left ( 1/2- \varepsilon \right )-|\gamma|}\text{ for } 0 < |\alpha | < M,
\end{equation*}
and
\begin{equation*}
\sup_{t\in(0,1)} \abs{\partial _\xi^\gamma \partial_x^\beta r(t, x, \xi)} \lesssim   \la\xi\ra ^{m-M\left ( 1/2 -\varepsilon \right )-|\gamma|}.
\end{equation*}
\end{Prop}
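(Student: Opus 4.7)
The plan is to derive the symbol $\sigma_t$ of the composition by direct computation and then obtain the expansion via a translated Taylor expansion of $b$ at the critical point of the inner phase. First, I would substitute the integral representations of $b(tD)$ and $T_{a_t}^\varphi$ into their composition and use Fubini to rearrange the iterated integrals. Factoring $e^{i\varphi(x,\xi)}$ outside the remaining $(y,\eta)$-integrals immediately yields the stated oscillatory formula for $\sigma_t$. The inner phase $(x-y)\cdot\eta + \varphi(y,\xi)-\varphi(x,\xi)$ has a unique critical point $(y,\eta) = (x,\nabla_x\varphi(x,\xi))$ in view of hypothesis~(i).

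To extract the asymptotic expansion, I would translate $\eta = \zeta + \nabla_x\varphi(x,\xi)$, turning the phase into $(x-y)\cdot\zeta + \rho(x,y,\xi)$, where $\rho(x,y,\xi) := \varphi(y,\xi)-\varphi(x,\xi)-(y-x)\cdot\nabla_x\varphi(x,\xi)$ vanishes to second order at $y=x$. Next, Taylor-expand $b(t\zeta + t\nabla_x\varphi(x,\xi))$ in $\zeta$ around the origin up to order $M$. For each finite-order term the factor $\zeta^\alpha e^{i(x-y)\cdot\zeta}$ rewrites as $(-D_y)^\alpha e^{i(x-y)\cdot\zeta}$; integration by parts in $y$ followed by Fourier inversion in $\zeta$ collapses the double integral to an evaluation at $y=x$, producing $\sigma_\alpha(t,x,\xi) = (\partial^\alpha b)(t\nabla_x\varphi(x,\xi))\,D_y^\alpha[a_t(y,\xi)e^{i\rho(x,y,\xi)}]|_{y=x}$. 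The $\alpha=0$ term recovers the claimed principal symbol $b(t\nabla_x\varphi)\,a_t$.

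The symbol estimates for $\sigma_\alpha$ then follow from Leibniz's rule and Fa\`a di Bruno's formula: because $\rho$ and $\partial_y\rho$ both vanish at $y=x$, the only surviving contributions in $D_y^\alpha e^{i\rho}|_{y=x}$ come from products $\prod_j \partial_y^{\gamma_j}\rho$ with $|\gamma_j|\geq 2$, bounded by $\langle\xi\rangle^{|\alpha|/2}$ via hypothesis~(ii). Combined with $|\partial^\alpha b(t\nabla_x\varphi)|\lesssim \langle t\xi\rangle^{-|\alpha|}$ and the elementary estimate $\langle t\xi\rangle^{-|\alpha|}\lesssim \min(1,(t\langle\xi\rangle)^{-|\alpha|})$, balancing against the normalising factor $t^{|\alpha|(1-\varepsilon)}$ separately in the regimes $t\langle\xi\rangle\geq 1$ and $t\langle\xi\rangle\leq 1$ produces exactly the stated bound $\langle\xi\rangle^{m-|\alpha|(1/2-\varepsilon)}$. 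Derivatives in $(x,\xi)$ are controlled analogously through iterated chain and Leibniz rules, exploiting hypothesis~(ii) to bound mixed derivatives of the phase.

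The main obstacle is the remainder $r$. Substituting the integral form of Taylor's remainder produces an oscillatory integral weighted by $(\partial^\alpha b)(t\nabla_x\varphi + st\zeta)$ with $|\alpha|=M$, which must be controlled uniformly for $t\in (0,1]$. I would split the $\zeta$-integration at the scale $|\zeta|\sim \langle\xi\rangle^{1/2-\varepsilon}$ via a dyadic partition of unity: on the low-frequency side the direct symbol bounds, combined with the decay supplied by the Kuranishi factor $e^{i\rho}$ and integration by parts in $y$, suffice; while on the high-frequency side non-stationary phase in $y$ via Lemma~\ref{lem:non-stationary} extracts arbitrarily many factors of $|\zeta|^{-1}$. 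The prefactor $t^{M\varepsilon}$ emerges from writing $t^M = t^{M(1-\varepsilon)}t^{M\varepsilon}$ and absorbing the first factor against the $\langle\xi\rangle^{M/2}$ loss coming from the second $y$-derivatives of $\varphi$. The delicate point, as in all such composition calculi, is tracking the combinatorial balance between gains from integrations by parts and losses from derivatives of the phase uniformly in the small parameter $t$.
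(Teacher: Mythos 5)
Your computation of $\sigma_t$, the translation $\eta=\zeta+\nabla_x\varphi(x,\xi)$, and the treatment of the finite-order terms are essentially sound: with $\sigma_\alpha=(\partial^\alpha b)(t\nabla_x\varphi)\,D_y^\alpha\big[a_t(y,\xi)e^{i\rho(x,y,\xi)}\big]\big|_{y=x}$ the surviving Fa\`a di Bruno factors are pure $x$-derivatives of $\varphi$ of order at least two, each $O(\la\xi\ra)$ by hypothesis (ii) and at most $|\alpha|/2$ of them, and your two-regime balance $t^{|\alpha|(1-\varepsilon)}\la t\xi\ra^{-|\alpha|}\lesssim \la\xi\ra^{-|\alpha|(1-\varepsilon)}$ does give the stated bound for $\sigma_\alpha$, including after $\partial_x^\beta\partial_\xi^\gamma$. (For the record, the paper does not prove this proposition; it quotes \cite[Theorem 4.2]{RRS} and \cite[Theorem 3.11]{CISY}.)

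The genuine gap is in the remainder, and it sits exactly in the region $|\zeta|\approx|\xi|$, which neither of your two tools covers. The $y$-gradient of the phase is $-\zeta+\nabla_y\varphi(y,\xi)-\nabla_x\varphi(x,\xi)$, and the difference $\nabla_y\varphi(y,\xi)-\nabla_x\varphi(x,\xi)$ is only bounded by roughly $2C_2|\xi|$; hence for every $\zeta$ with $\la\xi\ra^{1/2-\varepsilon}\lesssim|\zeta|\lesssim|\xi|$ the phase can be stationary in $y$, and Lemma \ref{lem:non-stationary}, which requires $|\nabla\Phi|>0$ and $|\partial^\alpha\Phi|\lesssim|\nabla\Phi|$, simply does not apply until $|\zeta|\geq C\la\xi\ra$ with $C$ large compared to $C_2$. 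In that same band the ``direct symbol bounds'' also collapse: the factor $(\partial^\alpha b)(t\nabla_x\varphi+st\zeta)$ loses all decay when $s\zeta$ is close to $-\nabla_x\varphi(x,\xi)$ (which is possible as soon as $|\zeta|\gtrsim C_1|\xi|$), while $|t\zeta|^M$ is then of size $(t|\xi|)^M$, so nothing compensates it. (Even on the low side $|\zeta|\leq\la\xi\ra^{1/2-\varepsilon}$ a crude volume count costs an extra $\la\xi\ra^{n(1/2-\varepsilon)}$, which exceeds the target for small $M$, so more than direct bounds is needed there too.) The standard repair, and what the cited proofs carry out, is either to insert a cutoff restricting to $|\zeta|\leq\kappa|\xi|$ with $\kappa<C_1$ \emph{before} Taylor expanding, so that $\la t\nabla_x\varphi+st\zeta\ra\gtrsim\la t\xi\ra$ on the support, and to treat the exterior region by a separate argument exploiting the joint oscillation in $(y,\zeta)$ through simultaneous integration by parts in both variables; or to perform the Kuranishi change of variables $\eta=\zeta+g(x,y,\xi)$ with $g(x,y,\xi):=\int_0^1\nabla_x\varphi(x+\tau(y-x),\xi)\dd\tau$, which makes the phase exactly $(x-y)\cdot\zeta$ and reduces the problem to a parameter-dependent double-symbol calculus. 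Without one of these devices your sketch does not establish the bound $\la\xi\ra^{m-M(1/2-\varepsilon)-|\gamma|}$ for $r$ with the prefactor $t^{M\varepsilon}$, uniformly in $t$.
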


\begin{Rem}
The composition \emph{Proposition \ref{thm:monsteriosity}} as well as the forthcoming \emph{Theorem \ref{thm: main composition} }are both formulated for \emph{FIOs} where the corresponding amplitudes are assumed to vanish in a neighbourhood of the origin. This is just to avoid the singularity of the phase function at the origin. However, as we shall see in \emph{Remark \ref{low freq rem}}, this won't cause any problems for the validity of our results.
\end{Rem}

\section{Reduction and globalisation }\label{RS globalisation}

We start by describing how the problem of $L^p$-boundedness of FIOs with SND phase functions that belong to the class $\Phi^2$, can be reduced to the case of operators that are well-suited for the Ruzhansky-Sugimoto's globalisation procedure.\\


Thus let $T_a^{\varphi}$ be an FIO given by \eqref{eq:FIO}, with $\varphi\in \Phi^2$ and SND. We start by localising the amplitude in the $\xi$ variable by introducing an open convex covering $\{U_l\}_{l=1}^{M},$ with maximum of diameters $d$, of the unit sphere $\mathbb{S}^{n-1}$.
Let $\Xi_{l}$ be a smooth partition of unity subordinate to the covering $U_l$ and set $$a_{l}(x,\xi):=a(x,\xi)\, \Xi_{l}\brkt{\frac{\xi}{|\xi|}}.$$
Define
\begin{equation*}
  T_{l}f(x):=  \int_{\R^n} \, a_l(x,\xi)\, e^{i\varphi(x,\xi)}\,\widehat{f}(\xi) \, \ddd\xi,
\end{equation*}
and fix a point $\zeta_l \in U_l.$  Then for any $\xi\in U_l$, Taylor's formula and Euler's homogeneity formula yield
\begin{align*}
  \varphi(x,\xi) &= \varphi(x,\zeta_l) +  \nabla_{\xi}\varphi (x,\zeta_l)\cdot (\xi-\zeta_l) +\lambda (x,\xi) \\
  &= \lambda(x,\xi)+ \nabla_{\xi}\varphi(x,\zeta_l)\cdot\xi.
\end{align*}
Furthermore, for $\xi\in U_l$,
$$\partial_{\xi_k} \lambda(x,\xi)= \partial_{\xi_k} \varphi\Big(x,\frac{\xi}{|\xi|}\Big)-\partial_{\xi_k} \varphi(x,\zeta_l),$$
so the mean-value theorem and the definition of class $\Phi^2$ yield $$|\partial_{\xi_k}\partial^{\beta}_{x} \lambda(x,\xi)|\leq Cd,$$
for all $|\beta|\geq 0$, and for $|\alpha|\geq 2$, $$|\partial^{\alpha}_{\xi}\partial^{\beta}_{x} \lambda(x,\xi)|\leq C |\xi|^{1-|\alpha|},$$
for all $\beta$. One also observes that due to the homogeneity of $\lambda(x,\xi)$ in $\xi$ and the mean-value theorem, one also has that $|\nabla_x\lambda(x, \xi)|\lesssim |\xi|$. We shall now extend the function $\lambda(x,\xi)$ to the whole of $\mathbb{R}^{n}\times \mathbb{R}^{n}\setminus \{ 0 \}$, preserving its properties and we denote this extension by $\lambda(x,\xi)$ again. Now this $\lambda$ belongs to the class $\Phi^1$. Hence the Fourier integral operators $T_l$ defined by
\begin{equation*}
  T_{l}f(x):= \int_{\R^n} a_l(x,\xi)\,e^{i\lambda(x,\xi)+i  \nabla_{\xi}\varphi(x,\zeta_l)\cdot\xi}\, \widehat{f}(\xi) \, \ddd\xi,
\end{equation*}
are the localised pieces of the original Fourier integral operator $T$ and therefore $$T=\sum_{l=1}^{M} T_l.$$
Now, let us investigate the $L^p$-boundedness of each piece $T_l$. To this end we observe that due to the SND assumption on $\varphi$, the map $\mathbf t_{l}(x):= \nabla_\xi \varphi(x,\zeta_l)$ is a global diffeomorphism and composing $T_l f(x)$ with the inverse of $\mathbf t_{l}$ results in the FIO
\begin{equation*}
  (T_{l}f)(\mathbf t^{-1}_{l}(x))= \int_{\R^n} a_l(\mathbf t^{-1}_{l}(x),\xi)\,e^{i\lambda(\mathbf t^{-1}_{l}(x),\xi)+ix\cdot \xi}\, \widehat{f}(\xi) \, \ddd\xi.
\end{equation*}
Observe that all the derivatives of $\mathbf t^{-1}_{l}$ are bounded and indeed the phase function $\lambda(\mathbf t^{-1}_{l}(x),\xi)+x\cdot \xi$ is SND (note also that the diameters $d$ can be picked as small as we like).
Therefore the study of the global $L^p$-boundedness of $T_l$ is reduced to the study of the global $L^p$-boundedness of FIOs of the form
\begin{equation*}
   \int_{\R^n} {\sigma(x,\xi)}\,e^{i\theta(x,\xi)+ix\cdot \xi}\, \widehat{f}(\xi) \, \ddd\xi,
\end{equation*}
where $\sigma(x,\xi)$ belongs to the same amplitude-class as $a(x,\xi)$, {$\theta\in \Phi^1$} and $\theta(x,\xi)+ x\cdot \xi$ being SND.\\

In a similar way, one can show that, for an FIO of the form $$ \iint_{\R^n\times \R^n} a(y,\xi)\, e^{i\varphi(y,\xi)-ix\cdot\xi}\, f(y) \ddd \xi \dd y,$$with $\varphi\in \Phi^2$, matters can be reduced to FIOs of the form

\m{
   \iint_{\R^n\times \R^n} {\sigma(y,\xi)}\, e^{i\theta(y,\xi)+i(y-x)\cdot\xi}\, f(y)  \ddd\xi \dd y,
}
where $\sigma(y,\xi)$ belongs to the same class as $a(y,\xi)$ and $\theta(y,\xi)\in \Phi^{1}.$\\

Now with these reductions in mind we proceed to describe the globalisation procedure. In \cite{RuSu}, Ruzhansky and Sugimoto developed a new technique to transfer local boundedness of Fourier integral operators, which was proven by Seeger, Sogge and Stein \cite{SSS}, to a global result, where the amplitudes of the corresponding operators do not have compact spatial supports. We also note that this transference method only works for $\rho\neq 0$. In order to prove global regularity results we follow \cite{RuSu}, and first assume that the phase function is smooth in the the support of the amplitude (this assumption can be removed by dividing the operator into low and high frequency portions and treat each one separately). Then one defines the function
$$H(x,y,z):=\inf_{\xi\in\R^n}\abs{z+\nabla_\xi\vartheta(x,y,\xi)},
$$
where for us $\vartheta(x,y,\xi)$ is either $\theta(x,\xi)$ or $-\theta(y,\xi)$, with $\theta\in \Phi^1$ and
\[
\Delta_r:=\{(x,y,z)\in\R^n\times\R^n\times\R^n:H(x,y,z)\geq r\}.
\]
One also defines
$$
\widetilde H(z)
:=\inf_{x,y\in\R^n}H(x,y,z)=
\inf_{x,y,\xi\in\R^n}\abs{z+\nabla_\xi\vartheta(x,y,\xi)}
$$
and
$$
\widetilde\Delta_r
:=\set{z\in\R^n:\widetilde H(z)\geq r}.
$$
Let $\sigma(x, y,\xi)\in \mathcal{C}^{\infty}(\Rn \times \Rn \times \Rn)$ satisfying the estimate
\m{
\left |\partial_\xi^\alpha \partial_x^\beta \partial_y^\gamma \sigma(x, y,\xi) \right |\lesssim \langle\xi\rangle ^{m-\rho|\alpha|+\delta|\beta +\gamma|},
}
with $m\leq 0$, for all multi-indices $\alpha$, $\beta$ and $\gamma$ and $(x,y,\xi)\in \R ^n\times \R ^n \times \Rn$.
We set
$$
M_L :=\sum_{|\gamma|\leq L}\sup_{x,y,\xi\in\R^n}
\abs{\langle\xi\rangle^{-(m-\rho|\gamma|)}\,\partial^\gamma_\xi \sigma(x,y,\xi)}
$$
and
$$
N_L :=\sum_{1\leq |\gamma|\leq L}\sup_{x,y,\xi\in\R^n}
\abs{\langle\xi\rangle^{-(1-|\gamma|)}\,\partial^\gamma_\xi \vartheta(x,y,\xi)}.
$$
Here we observe that $N_L <\infty$ by the $\Phi^1$-condition on $\theta$ above. Given these definitions one has the following lemma.
\begin{Lem}\label{Lem:outside}
Let $r\geq1$ and $L\geq 1$.
Then we have
$\R^n \setminus \widetilde{\Delta}_{2r} \subset \{z:\, |z|<(2+N_L )r \}$. Furthermore for $r>0$, $x\in\widetilde{\Delta}_{2r}$ and $|y|\leq r$
we have
\begin{equation}\label{H bounded by H}
 \widetilde{H}(x)\leq 2H(x,y, x-y)
\end{equation}
and therefore $(x,y,x-y)\in\Delta_r$.
\end{Lem}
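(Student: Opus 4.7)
The proof splits cleanly into the two statements, and the key observation driving both is that the $|\gamma|=1$ summand in the definition of $N_L$ gives a uniform bound
\m{
|\nabla_\xi \vartheta(x,y,\xi)| \leq N_L \quad \text{for all } (x,y,\xi)\in \R^n\times\R^n\times(\R^n\setminus\{0\}),
}
since $\langle \xi\rangle^{-(1-|\gamma|)} = 1$ when $|\gamma|=1$. This is essentially all that the $\Phi^1$-hypothesis on $\vartheta$ buys us at this step.

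For the first claim, the plan is a direct unwinding of the definitions. If $z\notin\widetilde{\Delta}_{2r}$ then $\widetilde H(z)<2r$, so by the definition of the infimum there exist $x,y,\xi$ with
\m{
|z+\nabla_\xi \vartheta(x,y,\xi)|<2r.
}
Applying the triangle inequality and the uniform bound above yields $|z|<2r+N_L$, and since $r\geq 1$ we have $N_L\leq N_L r$, whence $|z|<(2+N_L)r$ as required.

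For the second claim, the idea is to compare the infima defining $\widetilde H(x)$ and $H(x,y,x-y)$ by shifting by $y$. For any $\xi\in\R^n$, the triangle inequality gives
\m{
|x+\nabla_\xi \vartheta(x,y,\xi)| \leq |(x-y)+\nabla_\xi \vartheta(x,y,\xi)|+|y|,
}
and taking the infimum over $\xi$ on both sides, then the infimum over $x,y$ on the left, produces
\m{
\widetilde H(x) \leq H(x,y,x-y)+|y|.
}
Now use the standing assumptions $x\in\widetilde{\Delta}_{2r}$ (so $\widetilde H(x)\geq 2r$) and $|y|\leq r$, which together yield $|y|\leq r\leq \widetilde H(x)/2$. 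Substituting, we obtain $\widetilde H(x)\leq H(x,y,x-y)+\widetilde H(x)/2$, i.e.\ \eqref{H bounded by H}. The final assertion $(x,y,x-y)\in\Delta_r$ is then immediate: $H(x,y,x-y)\geq \widetilde H(x)/2\geq r$.

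The proof involves no real obstacle; the only care needed is in correctly interchanging inequalities with infima (taking the infimum over $\xi$ on both sides of the triangle inequality, then over $x,y$ only on the left) and in remembering to use the hypothesis $r\geq 1$ for the first part, which is precisely what absorbs the constant $N_L$ into the factor $(2+N_L)r$.
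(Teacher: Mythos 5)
Your proof is correct and follows essentially the same route as the paper: the first inclusion via the triangle inequality and the uniform bound $|\nabla_\xi\vartheta|\leq N_L$ (absorbing $N_L$ using $r\geq 1$), and the second via the shift-by-$y$ triangle inequality combined with $|y|\leq r\leq \widetilde H(x)/2$. Your explicit bookkeeping of when infima are taken is a slightly more careful rendering of the same argument.
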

\begin{proof}
For $z\in\R^n\setminus\widetilde{\Delta}_{2r}$,
we have
$\widetilde{H}(z)
<2r$.
Hence, there exist $x_0,y_0,\xi_0\in\R^n$ such that
$$
|z+\nabla_\xi\vartheta(x_0,y_0,\xi_0)|<2r.
$$
Since, $r\geq 1$, this yields that
\[
|z|\leq|z +\nabla_\xi\vartheta(x_0,y_0,\xi_0)|+|\nabla_\xi\vartheta(x_0,y_0,\xi_0)|
\leq 2r+N_L \leq (2+N_L )r.
\]

The claim that $(x,y,x-y)\in\Delta_r$ follows from \eqref{H bounded by H} and the definition of $ \Delta_r$. Therefore it only remains to prove \eqref{H bounded by H}. Now, if $|y|\leq r$ and $x\in \widetilde{\Delta}_{2r}$ then since $  \widetilde{H}(x)\geq 2r$, we have that
\begin{align*}
 \widetilde{H}(x)
&\leq |x+\nabla\vartheta(x,y,\xi)|
\leq|x-y+\nabla\vartheta(x,y,\xi)|+|y|
\\
&\leq|x-y+\nabla\vartheta(x,y,\xi)|+  \frac{\widetilde{H}(x)}2.
\end{align*}

From this, \eqref{H bounded by H} follows at once.
\end{proof}
In order to prove the global boundedness, the following result is of particular importance.
\begin{Lem}\label{Lem:kernel RuzhSug}
The kernel
$$K(x,y,z)
:=\int_{\R^n}e^{iz\cdot\xi+i\vartheta(x,y,\xi)}\, \sigma (x,y,\xi)\ddd\xi$$
is smooth on $\cup_{r>0}\Delta_r$. Moreover, for all $L>  n/\rho$ and $r\geq 1$ it satisfies
\begin{equation}\label{boundedness of HLK}
\|H^{L} K\|_{L^\infty(\Delta_r)}\leq C( L,M_L ,N_{L+1}),
\end{equation}
where $C( L, M_L ,N_{L+1})$ is a positive constant depending
only on $L$, $M_L$ and $N_{L+1}$.
For $ L> n$ and $r\geq 1$, the function $\widetilde{H}(z)$ satisfies the bound
\begin{equation}\label{integrable HL}
\| \widetilde{H}^{-L}\|_{L^1(\widetilde{\Delta}_r)}
\leq C(L,N_{L+1}).
\end{equation}
\end{Lem}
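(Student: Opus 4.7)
The approach is the standard non-stationary-phase integration by parts in $\xi$, applied to the full phase $\phi(x,y,z,\xi):=z\cdot\xi+\vartheta(x,y,\xi)$. On $\Delta_r$ this phase is globally non-stationary: by the very definition of $\Delta_r$,
\[
|\nabla_\xi\phi(x,y,z,\xi)|=|z+\nabla_\xi\vartheta(x,y,\xi)|\geq H(x,y,z)\geq r\geq 1,
\]
uniformly in $\xi\in\R^n$. Therefore the first order operator
\[
\mathcal L u := \frac{\nabla_\xi\phi}{i|\nabla_\xi\phi|^2}\cdot\nabla_\xi u
\]
is well defined on the domain of integration, preserves $e^{i\phi}$, and after $L$ successive integrations by parts one gets
\[
K(x,y,z) = \int_{\R^n} e^{i\phi(x,y,z,\xi)}\,({}^{t}\mathcal L)^L \sigma(x,y,\xi)\,\ddd\xi.
\]

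The key step is an induction on $L$ showing that $({}^{t}\mathcal L)^L\sigma$ can be expanded as a finite sum of terms, each pointwise dominated by $C(L,M_L,N_{L+1})\,H^{-L}\,\langle\xi\rangle^{m-\rho L}$. The crucial gain behind this bound is that every $\xi$-derivative of the coefficient $\nabla_\xi\phi/|\nabla_\xi\phi|^2$ produces either a derivative $\partial_\xi^\gamma\vartheta$ with $|\gamma|\geq 2$, which by the $\Phi^1$-hypothesis is $O(\langle\xi\rangle^{-1})$, or one extra factor $|\nabla_\xi\phi|^{-1}\leq H^{-1}$. Combining this with the symbol estimate $|\partial_\xi^\alpha\sigma|\lesssim\langle\xi\rangle^{m-\rho|\alpha|}\leq\langle\xi\rangle^{-\rho|\alpha|}$ (whose constants are controlled by $M_L$) and with the universal upper bound $N_{L+1}$ on the involved $\xi$-derivatives of $\vartheta$, one arrives at the desired pointwise estimate. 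Since $m\leq 0$, inserting it back into the integral yields
\[
|H(x,y,z)^L K(x,y,z)| \lesssim \int_{\R^n} \langle\xi\rangle^{-\rho L}\,\ddd\xi,
\]
which is finite exactly when $\rho L>n$, that is $L>n/\rho$; this is \eqref{boundedness of HLK}. The $\mathcal C^\infty$-regularity of $K$ on $\cup_{r>0}\Delta_r$ follows by differentiating under the integral and repeating the same argument: $z$-derivatives introduce factors $(i\xi)^\gamma$ while $(x,y)$-derivatives preserve the symbol class, and the extra polynomial growth in $\xi$ is absorbed by choosing $L$ sufficiently large.

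For the integrability statement, we observe that the $\Phi^1$-condition with $|\gamma|=1$ yields $|\nabla_\xi\vartheta(x,y,\xi)|\leq N_1\leq N_{L+1}$ uniformly in $x,y,\xi$, so directly from the definition of $\widetilde H$ one has $\widetilde H(z)\geq|z|-N_1$ for every $z$. In particular, $\widetilde H(z)\geq|z|/2$ whenever $|z|\geq 2N_1$, which makes $\widetilde H^{-L}$ integrable at infinity as soon as $L>n$. On the bounded complement $\widetilde\Delta_r\cap\{|z|\leq 2N_1\}$ the inequality $\widetilde H\geq r\geq 1$ gives $\widetilde H^{-L}\leq 1$, contributing only the volume of a fixed ball. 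Summing the two pieces produces \eqref{integrable HL}.

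The only genuinely delicate bookkeeping is in the induction bounding $({}^{t}\mathcal L)^L\sigma$: one must keep careful track of the powers of $|\nabla_\xi\phi|$ in the numerator versus the denominator after each differentiation of $\nabla_\xi\phi/|\nabla_\xi\phi|^2$, and then use the $\Phi^1$-decay of $\partial_\xi^\gamma\vartheta$ for $|\gamma|\geq 2$ in exactly the right way to see that the net effect of $L$ applications of $({}^{t}\mathcal L)$ is the claimed $H^{-L}\langle\xi\rangle^{-\rho L}$ decay rather than a worse $\xi$-behaviour. Once this is secured, both conclusions follow by integrating in $\xi$ (respectively, in $z$) and exploiting $r\geq 1$.
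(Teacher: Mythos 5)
Your proposal is correct and follows essentially the same route as the paper: the same non-stationary phase operator $\frac{(z+\nabla_\xi\vartheta)\cdot\nabla_\xi}{i|z+\nabla_\xi\vartheta|^2}$, $L$ integrations by parts, the pointwise bound $r\leq H(x,y,z)\leq |z+\nabla_\xi\vartheta(x,y,\xi)|$ on $\Delta_r$, and the same splitting of $\widetilde{\Delta}_r$ at $|z|\approx N_{L+1}$ for the $L^1$-estimate of $\widetilde{H}^{-L}$. If anything, you spell out the symbol-class bookkeeping (why the $\xi$-integral converges precisely when $L>n/\rho$, using $m\leq 0$, $\rho\leq 1$ and $H\geq 1$) more explicitly than the paper does.
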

\begin{proof}
If one introduces the differential operator
$$
D:=\frac{(z+\nabla_\xi\vartheta)\cdot\nabla_{\xi}}{i|z+\nabla_\xi\vartheta|^2},
$$
with the transpose $D^*$, then integrating by parts $L$ times yields
$$
K(x,y,z)=\int_{\R^n}e^{iz\cdot\xi+i\vartheta(x,y,\xi)}
\left(D^*\right)^{L} \sigma (x,y,\xi)\ddd\xi.
$$

Now \eqref{boundedness of HLK} follows from the relation
$$r\leq H(x,y,z)\leq|z+\nabla_\xi\vartheta(x,y,\xi)|,$$
which \smallskip is valid for $(x,y,z)\in\Delta_r$ and $\xi\in\R^n$.
Moreover
$$|z|\leq |z+\nabla_\xi\vartheta(x,y,\xi)|+N_{L+1},$$
for any $\xi\not=0$, which yields that
$$|z|\leq  \widetilde{H}(z)+N_{L+1}.$$
Hence for $|z|\geq 2N_{L+1}$ one has
$$ |z|\leq \widetilde{H}(z)+ |z|/2$$
and therefore
$$|z|\leq 2 \widetilde{H}(z).$$
Using this we get
\begin{align*}
\| \widetilde{H}^{-L}\|_{L^1(\widetilde{\Delta}_r)}
&\leq
\| \widetilde{H}^{-L}\|_{L^1(\widetilde{\Delta}_r\cap\{|z|\leq 2N_{L+1}\})}+
\| \widetilde{H}^{-L}\|_{L^1(\widetilde{\Delta}_r\cap\{|z|\geq 2N_{L+1}\})}
\\
&\leq
r^{-L}\int_{|z|\leq 2N_{L+1}}  \dd z +
2^{L}\int_{|z|\geq 2N_{L+1}}|z|^{-L}\dd z \\ & \leq C( L, N_{L+1}),
\end{align*}
which proves \eqref{integrable HL} .
\end{proof}\hspace*{1cm}

In Section \ref{Lp section} in the proof of Theorem \ref{thm:LpResult} (Step 3 of the proof), we shall see how Lemmas \ref{Lem:outside} and \ref{Lem:kernel RuzhSug} are used to globalise the local $L^p$-boundedness result.

\section{{Composition of Fourier multipliers and FIOs}}\label{Composition FIOs}

We start with a composition theorem which allows us to
left-compose a Fourier multiplier with an FIO. The difference between Theorem \ref{thm: main composition} below and Proposition \ref{thm:monsteriosity} lies in the fact that, although the latter  deals with the parameter dependent case, it only covers amplitudes in $S^m_{1,0}(\R^n)$. Also the method of proof of Proposition \ref{thm:monsteriosity} is quite different from that of the following theorem whose proof is not just a modification of the former. The difficulties arise exactly when $\delta\geq \rho,$ but they can be overcome. However, as we shall see, the forbidden case of $\delta=1$ has to be excluded.\\

Note that the composition theorem below also allows us to compose our more general FIOs with Bessel potential operators (see i.e. Lemma \ref{lemma:HLS-lemma}) in order to obtain crucial $\mathscr{H}^q-L^2$ estimates that are in turn used in the proofs of the $L^p$-boundedness results (Theorem \ref{thm:LpResult}).

\begin{Th}\label{thm: main composition}
Let $m, m'\in\R$, $\rho \in [0,1], \delta \in [0,1)$  and $\Omega := \R^n \times {\{|\xi| > 1\}}$.
Suppose that $ a(x, \xi)\in S_{\rho,\delta}^m(\Rn) $, and it is supported in $\Omega$, $\gamma(\xi)\in S^{m'}_{1,0}(\Rn)$ and $\varphi \in \mathcal C^\infty (\Omega)$ is such that
\begin{enumerate}
\item[$(i)$]for constants $C_1, C_2 > 0$, $C_1|\xi| \leq |\nabla_x \varphi(x, \xi)| \leq C_2|\xi|$ for all $(x, \xi) \in \Omega$, and
\item[$(ii)$]for all $|\alpha|, |\beta| \geq 1$, $|\partial_x^\alpha \varphi(x, \xi)|\lesssim  \la \xi \ra$ and $|\partial_\xi^\alpha \partial _x^\beta \varphi (x, \xi)| \lesssim  1$, for all $(x, \xi) \in \Omega$.
\end{enumerate}

\noindent Consider the Fourier multiplier and the Fourier integral operator
\begin{equation*}
\gamma(D)f(x) := \int_{\R^n} e^{ix\cdot \xi}\,\gamma(\xi)\,\widehat f(\xi) \ddd \xi
\quad \text{\emph{ and }} \quad
T_{a}^\varphi f(x) := \int_{\R^n} e^{i\varphi(x,\xi)}\,a(x, \xi)\,\widehat f(\xi) \ddd\xi.
\end{equation*}
Then the composition operator $T_{b}^\varphi:=\gamma (D)T_{a}^\varphi $ is also an \emph{FIO} $($with the same phase as $T^\varphi_a$$)$, and with amplitude given by
\begin{equation}\label{eq:compampl}
b(x, \xi) := \iint_{\R^{n}\times \R^n} a(y, \xi)\,\gamma (\eta)\,e^{i(x-y)\cdot \eta+i\varphi(y,\xi)-i\varphi(x,\xi)} \ddd\eta \dd y.
\end{equation}
Moreover $b\in S^{m+m'}_{\rho, \delta}(\Rn)$.
\end{Th}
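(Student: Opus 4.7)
The first step is to derive the composition formula \eqref{eq:compampl}. Writing
\[
\gamma(D) T_a^\varphi f(x) = \int_{\R^n} e^{ix\cdot\eta}\,\gamma(\eta)\,\widehat{T_a^\varphi f}(\eta)\,\ddd\eta
\]
and inserting the explicit formula for $T_a^\varphi f$, an application of Fubini (legitimate after inserting a compactly supported cutoff in $\eta$ that is removed in the limit) together with factoring $e^{i\varphi(x,\xi)}$ outside the $(y,\eta)$-integral immediately yields \eqref{eq:compampl}.

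Next, to establish the symbol estimates for $b$, I would put the integral in a form whose phase has a non-degenerate critical point at the origin. The substitutions $y=x-z$ and $\eta\mapsto \eta + \nabla_x\varphi(x,\xi)$, combined with the Taylor expansion
\[
\varphi(x-z,\xi)-\varphi(x,\xi) = -z\cdot\nabla_x\varphi(x,\xi) + g(x,z,\xi),
\]
in which by hypothesis $(ii)$ the remainder satisfies $g=O(|z|^2\langle\xi\rangle)$ while each $\partial_\xi$-differentiation of $g$ trades one $\langle\xi\rangle$ factor for a constant, lead to
\[
b(x,\xi) = \iint_{\R^n\times \R^n} a(x-z,\xi)\,\gamma(\eta + \nabla_x\varphi(x,\xi))\, e^{iz\cdot\eta + ig(x,z,\xi)}\,\ddd\eta\,\dd z.
\]

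To estimate $|\partial_\xi^\alpha \partial_x^\beta b(x,\xi)|$ I would differentiate under the integral and then integrate by parts iteratively using the dual operators $L_\eta := (1+|z|^2)^{-1}(1-\Delta_\eta)$ and a companion $L_z$ built from the $z$-gradient of the full phase; these yield integrable factors $(1+|z|^2)^{-N}$ and $(1+|\eta|^2)^{-N'}$ respectively, at the cost of derivatives landing on $\gamma$, $a$, and $g$. The contributions track as follows: each $\partial_\xi$ on $a(x-z,\xi)$ gains $\langle\xi\rangle^{-\rho}$; each $\partial_\xi$ on $\gamma(\eta+\nabla_x\varphi(x,\xi))$ produces $\gamma'\cdot \partial_\xi \nabla_x\varphi$, of order $\langle\xi\rangle^{m'-1}\leq \langle\xi\rangle^{m'-\rho}$ by $(ii)$ and $\rho\leq 1$; each $\partial_x$ on $a$ gains $\langle\xi\rangle^{\delta}$; and each $\partial_x$ on $\gamma(\eta+\nabla_x\varphi)$ contributes at most $\langle\xi\rangle^{0}$ because the growth $|\partial_x^{2}\varphi|\lesssim \langle\xi\rangle$ is balanced by the order $-1$ coming from $\gamma'$. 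The principal contribution (formally at $z=\eta=0$) is $a(x,\xi)\gamma(\nabla_x\varphi(x,\xi))$, which by $|\nabla_x\varphi(x,\xi)|\approx \langle\xi\rangle$ lies in $S^{m+m'}_{\rho,\delta}(\R^n)$, and the residual terms inherit the same order or better.

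The hard part will be the regime $\delta \geq \rho$: differentiating $a$ in $x$ is then at least as costly as differentiating it in $\xi$, so the formal pseudodifferential-style expansion of $b$ has successive terms whose orders decrease by only $1-\delta$. This is precisely why the scheme closes only for $\delta<1$: when $\delta=1$, iteration no longer gains and the expansion becomes meaningless, mirroring the well-known failure of the composition calculus for forbidden symbols. A more routine but essential technical point is to justify all the changes of variables and Fubini applications uniformly in $\xi$ via a standard cutoff-and-limit argument on $\gamma$.
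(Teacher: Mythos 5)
Your derivation of the formula \eqref{eq:compampl} and the reduction via $y=x-z$, $\eta\mapsto\eta+\nabla_x\varphi(x,\xi)$ are fine, and in fact close to the paper's own reduction (the paper shifts by the averaged gradient $\xi+I(x,y,\xi)$, which has the advantage of linearising the phase exactly to $-z\cdot\zeta$, with no curvature remainder $g$ left in the exponent). The genuine gap is in the step where you claim the symbol estimates: you assert that integrating by parts with an operator ``built from the $z$-gradient of the full phase'' yields integrable factors $(1+|\eta|^2)^{-N'}$ at the cost of harmless derivatives. The $z$-gradient of your phase is $\eta+\nabla_x\varphi(x,\xi)-\nabla_x\varphi(x-z,\xi)$, and the last two terms range over a set of diameter $\sim\langle\xi\rangle$ as $z$ varies (they are only $O(|z|\langle\xi\rangle)$ for small $z$, and the weight $(1+|z|^2)^{-N}$ from the $\eta$-integrations by parts does not localise $z$ to a ball of radius $o(1)$). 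Hence on the whole region $|\eta|\lesssim\langle\xi\rangle$ the phase is not non-stationary in $z$ uniformly, and no decay in $\eta$ can be extracted this way; moreover each $z$-derivative landing on $a(x-z,\xi)$ costs $\langle\xi\rangle^{\delta}$ and each one landing on $g$ costs $\langle\xi\rangle$, so even where $|\eta|\gg\langle\xi\rangle$ the net gain per integration by parts is only of size $\langle\xi\rangle^{\delta}|\eta|^{-1}$, and closing the estimate requires a quantitative use of $\delta<1$ (choosing the number of integrations by parts so that the gain $2N(1-\delta)$ beats the accumulated losses) that your proposal never carries out. A related unaddressed uniformity issue: your bound ``$\partial_\xi$ on $\gamma(\eta+\nabla_x\varphi)$ is of order $\langle\xi\rangle^{m'-1}$'' uses $\langle\eta+\nabla_x\varphi(x,\xi)\rangle\approx\langle\xi\rangle$, which is true only for $|\eta|$ small compared with $\langle\xi\rangle$; it fails, for instance, near the zero set of the argument of $\gamma$.

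This is exactly the part of the argument the paper spends its effort on. There, one first removes the region where the frequency of the multiplier is $\lesssim\langle\xi\rangle$ by non-stationary phase in $y$, using the lower bound $|\nabla_y\varphi(y,\xi)-\eta|\gtrsim\langle\xi\rangle$ there, which produces an $S^{-\infty}$ contribution; the complementary piece is then rewritten, after the exact linearising change of variables, as an oscillatory integral with phase $-z\cdot\zeta$, split according to $|\zeta|\le\tfrac12\langle\xi\rangle^{\delta}$, $\tfrac12\langle\xi\rangle^{\delta}\le|\zeta|\le\tfrac12\langle\xi\rangle$ and $|\zeta|\ge\tfrac12\langle\xi\rangle$, and estimated with the $\delta$-calibrated integration-by-parts weights $(1+\langle\xi\rangle^{2\delta}|z|^2)^{-M}$ and $(1+\langle\xi\rangle^{2\delta}(-\Delta_\zeta))^{M}$, which is precisely what absorbs the $\langle\xi\rangle^{\delta}$ loss per $z$-derivative on $a$; the hypothesis $\delta<1$ enters there through the choice of $N$ with $2N(1-\delta)$ large. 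Your closing remarks correctly identify $\delta<1$ as the crucial restriction, but identifying it is not the same as making the estimates close; as written, the proposal asserts the conclusion of the hard step rather than proving it, so it does not constitute a proof.
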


\begin{Rem}
It is easy to show that if a phase function $\varphi\in \Phi^2$ is \emph{SND} then it satisfies all the requirements of \emph{Theorem \ref{thm: main composition}}.
\end{Rem}

\begin{proof}[Proof of \emph{Theorem \ref{thm: main composition}}]

The expression in \eqref{eq:compampl} can easily be derived through a simple calculation. Set
$$c(x,\xi,\eta):=a(x,\xi)\,\gamma(\eta)$$
and
$$\Phi(x,y,\xi,\eta):=(x-y)\cdot \eta+\varphi(y,\xi)-\varphi(x,\xi).$$
Then the following estimates are valid:
\begin{equation}\label{eq:forrelations}
\begin{split}
\langle \nabla_{\eta} \Phi\rangle&= \langle x-y\rangle,\\
\langle \nabla_{y} \Phi \rangle&\geq C_R \langle \xi-\eta\rangle,\,\,\, |\xi|\leq R, \, \, R>0,\\
|\nabla _x \Phi|&\lesssim |\xi| +|\xi-\eta| ,\\
|\nabla_{\xi} \Phi|&\lesssim |x-y|.
\end{split}
\end{equation}
Indeed the first equality is trivial, and for the second one, setting
\begin{equation*}
    \Lambda(x,\xi):=\varphi(x,\xi)-x\cdot \xi,
\end{equation*}
 we note that for $|\xi|<R$ and $R\geq 1$, triangle inequality and condition $(i)$ on the phase yield that
\begin{equation*}
    \begin{split}
2(C_2 +1)R (1+ |\nabla_{y} \Phi|)
& \geq 2(C_2 +1)R + |\nabla_{y} \Phi|\\
& =  2(C_2 +1)R +| \nabla_{y} (\varphi(y,\xi)-y\cdot\eta)|\\
& = 2(C_2 +1)R + | \nabla_{y} (y\cdot(\xi- \eta)+\Lambda(y,\xi))|\\
& = 2(C_2 +1)R +|\xi-\eta+ \nabla_{y} \Lambda(y,\xi)|\\
& \geq 2(C_2 +1)R + |\xi-\eta|-(C_2+1)  |\xi|\\
& \geq 2(C_2 +1)R -(C_2 +1)R  + |\xi-\eta|\\
& =(C_2 +1)R + |\xi-\eta| \\
& \geq (1+|\xi-\eta|).
\end{split}
\end{equation*}

On the other hand, for $|\xi|<R$ and $R<1$, condition $(i)$ on the phase implies
\begin{equation*}
    \begin{split}
(C_2+1)(1+ |\nabla_{y} \Phi|)
& \geq (C_2+1)+ \frac{1}{2}|\nabla_{y} \Phi|\\
& = (C_2+1)+\frac{1}{2}|\xi-\eta+ \nabla_{y} \Lambda(y,\xi)|\\
& \geq (C_2+1)+ \frac{1}{2}|\xi-\eta|-\frac{C_2+1}{2} |\xi|\\
& \geq (C_2+1)-\frac{R(C_2+1)}{2} + \frac{1}{2}|\xi-\eta|\\
& \geq \frac{1}{2}(1+ |\xi-\eta|).
\end{split}
\end{equation*}
To show the third estimate in \eqref{eq:forrelations} we observe that
condition $(i)$ on the phase
yields
\begin{equation*}
    \begin{split}
|\nabla_{x} \Phi|=| \nabla_{x} (x\cdot(\eta-\xi)-\Lambda(x,\xi))|\lesssim |\xi-\eta|+ (C_2+1)|\xi|\lesssim  |\xi-\eta|+ |\xi|.
\end{split}
\end{equation*}
Finally to show the forth estimate in \eqref{eq:forrelations} we observe that the mean-value theorem and condition $(ii)$ on the phase yield that
\begin{equation*}
    \begin{split}
|\nabla_{\xi} \Phi|=| \nabla_{\xi} \varphi(y,\xi)-\nabla_{\xi}\varphi(x,\xi)|\lesssim |x-y|.
\end{split}
\end{equation*}
Using Fa\`a di Bruno's formulae and estimate \eqref{eq:forrelations} we can also show that
\begin{equation}\label{estimate for the exp of the phase}
|\partial_x^{\beta}\partial_\xi^{\alpha}e^{i\Phi}|\lesssim \langle x-y\rangle^{|\alpha|}(1+|\xi|^2+ |\xi-\eta|^2) ^{|\beta|/2}.
\end{equation}

Introduce the differential operators
$$L_\eta:=\langle \nabla_{\eta}\Phi\rangle^{-2}\,(1-i\nabla_\eta\Phi\cdot\nabla_\eta)$$
$$L_y:=\langle \nabla_{y}\Phi\rangle^{-2}\,(1-i\nabla_y\Phi\cdot\nabla_y),$$ and integrating by parts we have
\begin{equation*}
b(x, \xi) = \iint_{\R^{n}\times \R^n} e^{i\Phi}\, (L_\eta^*)^{N}(L_y^*)^{N}c(y, \xi,\eta)\ddd\eta\dd y,
\end{equation*}
for large positive $N$. It follow from \eqref{eq:forrelations} and \eqref{estimate for the exp of the phase} that, for any $R>0$, \linebreak$b(x,\xi)\in \mathcal{C}^{\infty}_{b}(\R_{x}^n \times B(0,R))$ (the subscript $b$ indicates the boundedness of all the derivatives). It also follows from condition $(i)$ on the phase that
\begin{equation}\label{yderivative of the phase}
|\nabla_y \Phi|= |\nabla_y \varphi(y, \xi)-\eta|\geq C_1|\xi|-|\eta|.
\end{equation}
Now let $\chi\in \mathcal{C}_c^{\infty}(\R^n)$ be such that $0\leq \chi(x)\leq 1$ and $\chi(x)=1$ when $|x|\leq 1/2$ and $\chi(x)=0$ when $|x|\geq 2/3.$
Set
$$\chi_1(\eta,\xi):=\chi\Big (\frac{\eta}{C_1\langle\xi\rangle}\Big).$$
Since $|\eta|\leq 2 C_1\langle \xi\rangle/3$ on the support of $\chi_1$, \eqref{yderivative of the phase} yields (on $\supp\chi_1$)
$$|\nabla_y\Phi|\geq C_1\Big(|\xi|-\frac{2}{3}\langle \xi\rangle\Big).$$

At this point , we observe that since $$\lim_{|\xi|\to \infty} C_1\Big(\frac{|\xi|}{\langle \xi\rangle}-\frac{2}{3}\Big)=\frac{C_1}{3},$$
it follows that there exists $R_1>0$ and $C>0$ such that on $\supp \chi_1 \cap \{|\xi|\geq R_1\}$ one has
\begin{equation}\label{vetefan 2}
|\nabla_y\Phi|\geq C \langle \xi\rangle= C\Big(\frac{1}{2} +\frac{|\xi|^2}{2} + \frac{\langle \xi\rangle^2}{2}\Big) ^{1/2} \gtrsim (1 +|\xi|^2 + |\eta|^2) ^{1/2} .
\end{equation}
Setting
\begin{equation*}
b_1(x,\xi):=\iint_{\R^{n}\times \R^n} c(y, \xi,\eta)\,\chi_1 (\eta,\xi)\,e^{i\Phi} \ddd\eta \dd y,
\end{equation*}
and
$$\tilde L_y:= -i |\nabla_y\Phi|^{-2} \nabla_y \Phi \cdot \nabla_y,$$
and integrating by parts yields
\begin{equation*}
b_1(x,\xi)= \iint_{\R^{n}\times \R^n} e^{i\Phi}\,(L_\eta^*)^{N}(\tilde L_y^*)^{N}(\chi_1(\eta,\xi)\,c(y, \xi,\eta))\ddd\eta\dd y.
\end{equation*}
Since $b(x,\xi)\in \mathcal{C}^{\infty}_{b}(\R_{x}^n \times B(0,R))$, estimate \eqref{vetefan 2} yields that $b_1(x,\xi)\in S^{-\infty}(\Rn)$.\\

 Now define $\chi_2: =1-\chi_1$ and consider
 \begin{equation*}
b_2(x,\xi):=\iint_{\R^{n}\times \R^n} c(y, \xi,\eta)\,\chi_2 (\eta,\xi)e^{i\Phi} \ddd\eta \dd y.
\end{equation*}
To simplify the calculations from here we set
$$I(x,y,\xi):= \int_{0}^{1}\nabla_x \Lambda(y+s(x-y), \xi) \dd s.$$
Noting that $\nabla_x \varphi(x,\xi)=\nabla_x \Lambda(x,\xi)+\xi$, rewriting
$$\Phi(x,y,\xi,\eta)
= (x-y)\cdot (\eta-\xi-I(x,y,\xi)),$$
making the change of variables
$$z:=y-x, \quad
\zeta:=\eta-\xi-I(x,y,\xi),$$
and then defining
 $$c_1(y,\xi,\eta):= \chi_2 (\eta+I(x,y,\xi), \xi)\, c(y,\xi, \eta+I(x,y,\xi)),$$
 we obtain
\begin{equation*}
b_2(x,\xi)=\iint_{\R^{n}\times \R^n} e^{-iz\cdot\zeta}\,c_1(x+z, \xi,\xi+\zeta)\ddd\zeta \dd z.
\end{equation*}
Moreover, $b_2(x,\xi)$ in turn can be split into $b_3(x,\xi)+ b_4(x,\xi)$ with

\begin{equation*}
b_3(x,\xi):= \iint_{\R^{n}\times \R^n} e^{-iz\cdot\zeta}\chi_2(\zeta, \xi)\,c_1(x+z, \xi,\xi+\zeta)\ddd\zeta \dd z
\end{equation*}
and
\begin{equation*}
b_4(x,\xi):= \iint_{\R^{n}\times \R^n} e^{-iz\cdot\zeta}\chi_1(\zeta, \xi)\,c_1(x+z, \xi,\xi+\zeta)\ddd\zeta \dd z.
\end{equation*}
We observe that on the support of $\chi_2(\zeta, \xi)$ one has that
\begin{equation*}
|\zeta|\geq\frac{1}{2}C_1 \langle \xi\rangle
\end{equation*}
and on the support of $\chi_2 (I(x,y,\xi)+ \xi+\zeta, \xi)$,
\begin{equation*}
|I(x,y,\xi)+ \xi+\zeta |\geq \frac{1}{2}C_1 \langle \xi\rangle.
\end{equation*}
Therefore integrating by parts we can show that
\m{
&b_3(x,\xi)=  \iint_{\R^{n}\times \R^n} e^{-iz\cdot\zeta}|\zeta|^{-2N}\\
&\qquad \qquad  \qquad
\times(-\Delta_z)^{N}\{\langle z\rangle^{-2N}(1-\Delta_{\zeta})^{N}(\chi_2(\zeta, \xi)\,c_1(x+z, \xi,\xi+\zeta))\}\ddd\zeta \dd z,
}
to conclude that $b_3(x,\xi)\in S^{-\infty}(\Rn).$\\

Defining
$$c_2(y,\xi,\eta):= \chi_1(\eta-\xi,\xi)\,c_1(y,\xi,\eta),$$
we see that
\begin{equation*}
\begin{split}
    c_2(y,\xi,\eta)= \chi_1(\eta-\xi,\xi) \chi_2 (\eta+I(x,y,\xi), \xi)\, c(y,\xi,\eta+I(x,y,\xi)),
\end{split}
\end{equation*}
and $b_4(x,\xi)$ can be written as
\begin{equation}\label{defn of b4}
b_4(x,\xi)=\iint_{\R^{n}\times \R^n} e^{-iz\cdot\zeta}\,c_2(x+z, \xi,\xi+\zeta)\ddd\zeta \dd z.
\end{equation}
Using the definition of $c_2(y,\xi,\eta)$ we see that
\begin{equation*}
\begin{split}
&|\eta-\xi|\leq \frac{2}{3} C_1  \langle \xi\rangle\\
&|\eta+I(x,y,\xi) |\geq \frac{1}{2} C_1 \langle \xi\rangle,
\end{split}
\end{equation*}
on the support of $c_2(y,\xi,\eta)$.\\

In what follows we shall denote the derivative of $c_2(y,\xi,\eta)$ with respect to $y$ by $\partial_1 c_2$, the derivative of $c_2(y,\xi,\eta)$ w.r.t.  $\xi$ by $\partial_2 c_2$, and the derivative of $c_2(y,\xi,\eta)$ w.r.t. $\eta$ by $\partial_3 c_2$. Now using \eqref{defn of b4} and Taylor's formula we have that
\begin{equation*}
\begin{split}
   &b_4(x,\xi)=
   \sum_{|\nu|<N} \frac{(-i)^{|\nu|}}{\nu!}\iint_{\R^{n}\times \R^n} e^{-iz\cdot\zeta}\, (\partial^{\nu}_1 \partial^{\nu}_3  \,c_2) (x+z, \xi,\xi)\ddd\zeta \dd z\\&\quad +N\sum_{|\nu|=N} \int_{0}^{1} \frac{(1-s)^{N-1}}{\nu!} \iint_{\R^{n}\times \R^n}  e^{-iz\cdot\zeta}\, (\partial^{\nu}_1 \partial^{\nu}_3\,c_2) (x+z, \xi,\xi+s\zeta) \ddd\zeta \dd z \dd s.
  \end{split}
\end{equation*}

For $s\in [0,1]$, set
$$\sigma (x+z, \xi,\xi+s\zeta):= (\partial^{\nu}_1 \partial^{\nu}_3 \,c_2) (x+z, \xi,\xi+s\zeta).$$
Let us now study the behaviour of the derivatives of
\begin{equation*}
 r_s(x,\xi):=   \iint_{\R^{n}\times \R^n} e^{-iz\cdot\zeta}\,\sigma (x+z, \xi,\xi+s\zeta)\ddd\zeta \dd z.
\end{equation*}
 To this end observe that
\begin{equation*}
\partial^{\alpha}_{\xi}\partial^{\beta}_{x}  r_s(x,\xi)=\iint_{\R^{n}\times \R^n} e^{-iz\cdot\zeta}\, \partial^{\alpha}_{\xi}\partial^{\beta}_{x}\sigma (x+z, \xi,\xi+s\zeta)\ddd\zeta \dd z.
\end{equation*}

Now for $M>n/2$  we write $$e^{iz\cdot\zeta}=(1+\langle\xi\rangle^{2\delta} |z|^2)^{-M}(1+\langle\xi\rangle^{2\delta}(-\Delta_\zeta))^{M}e^{iz\cdot\zeta}$$
and integration by parts yields that
\begin{equation}\label{derivatives of b4}
    \partial^{\alpha}_{\xi}\partial^{\beta}_{x}  r_s(x,\xi)=\iint_{\R^{n}\times \R^n} e^{-iz\cdot\zeta}\,  R_s(x,\xi, z,\zeta)\ddd\zeta \dd z,
\end{equation}
with
\begin{equation}\label{defn: def av r}
    R_s (x,\xi, z,\zeta):= (1+\langle\xi\rangle^{2\delta} |z|^2)^{-M}(1+\langle\xi\rangle^{2\delta}(-\Delta_\zeta))^{M}\partial^{\alpha}_{\xi}\partial^{\beta}_{x}\sigma(x+z, \xi,\xi+s\zeta).
\end{equation}
We also have that
\begin{equation}\label{useful derivatives}
\begin{split}
    &|(1+\langle\xi\rangle^{2\delta}(-\Delta_\zeta))^{M}\,\partial^{\alpha}_{\xi}\partial^{\beta}_{x}\partial^{\gamma}_{z}\sigma(x+z, \xi,\xi+s\zeta)|\\&\qquad\lesssim  \sum_{|\lambda|<M}\langle\xi\rangle^{2\delta |\lambda|}\,(\partial^{\beta+\gamma}_{1}\partial^{\alpha}_{2}\partial^{2\lambda}_{3}\,c_2)(x+z, \xi,\xi+s\zeta) \\& \qquad\qquad\qquad+\sum_{|\lambda|<M}\langle\xi\rangle^{2\delta |\lambda|}\,(\partial^{\beta+\gamma}_{1}\partial^{2\lambda+\alpha}_{3}\,c_2)(x+z, \xi,\xi+s\zeta)\\&\qquad\lesssim  \sum_{|\lambda|<M}\langle\xi\rangle^{2\delta |\lambda|}\, \langle\xi\rangle^{m+\delta|\beta|+ \delta|\gamma|-\rho|\alpha|}\,\langle\xi+s\zeta\rangle^{m'-2|\lambda|}\\&  \qquad\qquad\qquad+\sum_{|\lambda|<M}\langle\xi\rangle^{2\delta |\lambda|}\, \langle\xi\rangle^{m+\delta|\beta|+ \delta|\gamma|}\,\langle\xi+s\zeta\rangle^{m'-2|\lambda|-|\alpha|}.
\end{split}
\end{equation}
Divide the domain of integration in $\zeta$ in \eqref{derivatives of b4} into three pieces
$\bold{A}:=\{|\zeta|\leq \langle\xi\rangle^\delta /2\}$, $\bold{B}:=\{\langle\xi\rangle^\delta/2\leq |\zeta|\leq \langle\xi\rangle/2\}$ and
$\bold{C}:=\{|\zeta|\geq \langle\xi\rangle/2\}.$\\

We observe that for $s\in [0,1]$,
$|\zeta|\leq  \langle\xi\rangle / 2$ we get
$$|\langle \xi+s\zeta\rangle -\langle \xi\rangle|=\Big|s\int_{0}^{1} \sum_{k=1}^n\zeta_j \partial_{\xi_j} \langle \xi+ts\zeta\rangle\dd t\Big|{\leq} \frac{\langle \xi\rangle}{2},$$
since
$|\partial_{\xi_j} \langle \xi+ts\zeta\rangle |\leq 1$.
This implies that for $s\in [0,1]$, $|\zeta|\leq \langle\xi\rangle/2$ we have
\begin{equation}\label{mixed japanese estim}
     \frac{\langle\xi\rangle}{2}\leq  \langle\xi+s\zeta\rangle \leq \frac{3\langle\xi\rangle}{2}.
\end{equation}

Moreover \eqref{useful derivatives} and \eqref{mixed japanese estim} also yield that in in $\bold{A}\cup \bold{B}$ we have
\begin{equation*}
    | R_s(x,\xi, z,\zeta)|\lesssim (1+\langle\xi\rangle^{2\delta} |z|^2)^{-M}\jap\xi^{m+m'-\rho|\alpha|+\delta|\beta|}.
\end{equation*}
Hence
\m{\Big|\int_{\bold{A}} \int_{\R^n}e^{-i z\cdot\zeta}\,  R_s \dd z \ddd \zeta \Big|&\lesssim \jap\xi ^{m+m'-\rho|\alpha|+\delta|\beta|-\delta n}\int_\bold{A} \int_{\R^n}(1+|u|^2)^{-M}  \dd u\ddd \zeta \\&\lesssim \jap\xi ^{m+m' -\rho|\alpha|+\delta|\beta|}.}
Next we observe that since
\begin{equation}\label{derivative of japs}
    |\partial^\gamma_z((1+\langle\xi\rangle^{2\delta} |z|^2)^{-M})|\lesssim \jap\xi^{\delta |\gamma|} (1+\langle\xi\rangle^{2\delta} |z|^2)^{-M},
\end{equation}
one has
$$|(-\Delta_z)^{M} R_s|\lesssim \jap\xi^{m+m'-\rho|\alpha|+\delta|\beta|+2M\delta}(1+\langle\xi\rangle^{2\delta} |z|^2)^{-M}.$$ Therefore
\begin{equation*}
\begin{split}
     &\Big|\int_{\bold{B}} \int_{\R^n}e^{-i z\cdot\zeta} \, R_s \dd z\ddd \zeta \Big| = \Big|\int_{\bold{B}} \int_{\R^n}e^{-i z\cdot\zeta} |\zeta|^{-2M} \, (-\Delta_z)^M {R_s} \dd z \ddd \zeta \Big|\\
     &\qquad\lesssim \jap\xi ^{m+m'-\rho|\alpha|+\delta|\beta|+(2M-n)\delta }\int_{\bold{B}} |\zeta|^{-2M}  \int_{\R^n}(1+|u|^2)^{-M}  \dd u\ddd \zeta \\&\qquad\lesssim \jap\xi ^{m+m'-\rho|\alpha|+\delta|\beta|}.
\end{split}
\end{equation*}

On the set $\bold{C}$ we have $\langle \xi\rangle\leq 2|\zeta|$ and for all $s\in [0,1]$ that $$\langle \xi+s\zeta\rangle \leq \jap\xi+ |\zeta|\leq 3 |\zeta|.$$
Hence the definition of $R_s$ in \eqref{defn: def av r} and \eqref{useful derivatives}, \eqref{derivative of japs}, yield that
$$|(-\partial^{\gamma}_z) R_s|\lesssim |\zeta|^{\max(m',0)+\delta|\beta|+2M\delta +\delta|\gamma|}  \jap\xi^{m}(1+\langle\xi\rangle^{2\delta} |z|^2)^{-M}.$$

Therefore integrating by parts and choosing $N$ so large that
$$\max(m',0)+\delta|\beta| +2M\delta-2N(1-\delta) <-n$$
and
$$m+\delta|\beta|+\max(m',0)+ 2M\delta-2N(1-\delta)+(1-\delta)n\leq m+m'-\rho|\alpha|+\delta|\beta|$$ (observe once again that $\delta<1$), we obtain
\begin{equation*}
\begin{split}
\Big|\int_{\bold{C}} \int_{\R^n}e^{-i z\cdot\zeta}\,  R_s \dd z\ddd \zeta \Big| & = \Big|\int_{\bold{C}} \int_{\R^n}e^{-i z\cdot\zeta}\, |\zeta|^{-2N}\,  (-\Delta_z)^N R_s \dd z\ddd \zeta \Big|\\
& \lesssim \jap\xi ^{m-n\delta }\int_{\bold{C}} |\zeta|^{\max(m',0)+\delta|\beta|+2M\delta-2N(1-\delta)}  \ddd \zeta \\
& \lesssim \jap\xi ^{m+m'-\rho|\alpha|+\delta|\beta|}.
\end{split}
\end{equation*}
This concludes the proof.
\end{proof}

\section{Decomposition of the FIOs}\label{SSS decomposition}

In connection to the study of the $L^p$-regularity of FIOs, based on an idea of C. Fefferman {\cite{Feff}}, Seeger, Sogge and Stein \cite{SSS} introduced a second dyadic decomposition superimposed on a preliminary Littlewood-Paley decomposition, in which each dyadic shell $\left \{2^{j-1}\leq
\vert \xi\vert\leq 2^{j+1}\right \}$ (as in Definition \ref{def:LP}) is further partitioned into truncated cones of thickness roughly $2^{j/2}$ and one can prove that $O\big(2^{j(n-1)/2}\big)$
such elements are needed to cover one shell. Since we are dealing with FIOs with amplitudes in general H\"ormander classes, the constructions in \cite{SSS} have to be generalised to this setting. For instance we need to adapt the influence set associated to an FIO to the more general amplitudes at hand, and get desirable estimates of the size of the aforementioned influence set as well as a lower bound involving the phase function of the FIO.
Our presentation here is essentially self-contained.
\begin{defn}\label{def:LP2}
For each $j\in\N$ we fix a collection of unit
vectors $\big \{\xi^{\nu}_{j}\big \} $ that satisfy the following two conditions.
\begin{enumerate}
\item [$(i)$]$ \big | \xi^{\nu}_{j}-\xi^{\nu'}_{j} \big |\geq 2^{-j/2},$ if $\nu\neq \nu '$.
\item [$(ii)$]If $\xi\in\mathbb{S}^{n-1}$, then there exists a $
\xi^{\nu}_{j}$ so that $\big \vert \xi -\xi^{\nu}_{j} \big \vert
<2^{-j/2}$.
\end{enumerate}
One can take a collection $\{\xi_{j}^{\nu}\}$ which is maximal with respect to the first property and there are at most $O\big(2^{j (n-1)/2}\big)$ elements in the collection $\{\xi_{j}^{\nu}\}$.\\

Let $\Gamma^{\nu}_{j}$ denote the cone in the $\xi$-space whose
 central direction is $\xi^{\nu}_{j}$, i.e.
\begin{equation}\label{eq:gammajnu}
\Gamma^{\nu}_{j}
:=\set{ \xi\in\R^n :\,
\Big| \frac{\xi}{\vert\xi\vert}-\xi^{\nu}_{j}\Big|\leq 2\cdot 2^{-j/2}}.
\end{equation}
One also defines
$$\chi_j^\nu := \frac{\eta_j^\nu}{ \sum_{\nu} \eta_j^\nu},$$
where
\m{\eta_j^\nu (\xi) := \phi\brkt{2^{j/2}\brkt{\frac {\xi}{|\xi|}-\xi_j^\nu}}}
and $\phi$ is a non-negative function in $ \mathcal C_c^\infty(\R^n)$ with $\phi(u)=1$ for $|u|\leq 1$ and $\phi(u)=0$ for $u\geq 2$.
\end{defn}
We have the following lemma:
\begin{Lem}\label{lem:chijnu}
The functions $\chi_j^\nu \in \mathcal C^\infty(\R^n\setminus \{0\} )$ and are supported in the cones $\Gamma_j^\nu$. They sum to $1$ in $\nu$\emph:
\begin{equation*}
\sum _{\nu}\chi^{\nu}_{j}(\xi) =1,\quad \text{ for all } j\text { and } \xi\neq 0
\end{equation*}
and moreover they satisfy the estimates
\nm{eq:quadrseconddyadcond}{\abs{\d_\xi^\alpha \chi_j^\nu (\xi) }\lesssim
2^{j|\alpha|/2}
\abs\xi^{-\abs\alpha},}
for all multi-indices $\alpha$ and
\nm{eq:quadrseconddyadcond2}{ \left \vert \partial
^{N}_{\xi_{1}}\chi^{\nu}_{j}(\xi) \right \vert\leq C_{N}
\vert \xi\vert ^{-N}, \quad  \text{for} \quad N\geq 1,}
if one chooses the axis in $\xi$-space such that $\xi_1$ is in the direction of $\xi^{\nu}_{j}$ and \linebreak $\xi':=(\xi _2 , \dots , \xi_{n})$ is perpendicular to $\xi^{\nu}_{j}$.
\end{Lem}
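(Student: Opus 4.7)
The plan is to verify in order the smoothness and support properties, the partition of unity identity, the general derivative bound \eqref{eq:quadrseconddyadcond}, and finally the sharper one-directional bound \eqref{eq:quadrseconddyadcond2}. Each $\eta_j^\nu$ is smooth on $\R^n\setminus\{0\}$ since $\phi$ is smooth and $\xi\mapsto \xi/|\xi|$ is smooth there, and its support lies in $\Gamma_j^\nu$ directly from \eqref{eq:gammajnu}. Setting $E(\xi):=\sum_\nu \eta_j^\nu(\xi)$, property $(ii)$ of \emph{Definition \ref{def:LP2}} provides, for each $\xi\neq 0$, some $\nu_0$ with $\big|\xi/|\xi|-\xi_j^{\nu_0}\big|<2^{-j/2}$; hence the argument of $\phi$ in $\eta_j^{\nu_0}$ has norm strictly less than $1$ and $\eta_j^{\nu_0}(\xi)=1$, giving $E(\xi)\geq 1$. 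Therefore $\chi_j^\nu = \eta_j^\nu/E$ is well-defined, smooth on $\R^n\setminus\{0\}$, supported in $\Gamma_j^\nu$, and the family sums to $1$.

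For \eqref{eq:quadrseconddyadcond}, I would exploit that $\omega_k(\xi):=\xi_k/|\xi|$ is homogeneous of degree zero, so $|\partial_\xi^\alpha \omega_k|\lesssim |\xi|^{-|\alpha|}$ for every $\alpha$. Fa\`a di Bruno applied to $\eta_j^\nu(\xi)=\phi(2^{j/2}(\omega-\xi_j^\nu))$ then yields $|\partial_\xi^\alpha \eta_j^\nu|\lesssim 2^{j|\alpha|/2}|\xi|^{-|\alpha|}$, each derivative of $\phi$ producing a factor $2^{j/2}$ from the inner rescaling and each derivative of $\omega$ producing a factor $|\xi|^{-1}$. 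A standard packing argument on $\mathbb{S}^{n-1}$ based on the separation property $(i)$ shows that at each $\xi$ only $O(1)$ of the $\eta_j^\nu(\xi)$ are nonzero, so the same estimate carries over to $E$; combined with $E\geq 1$, the quotient rule yields \eqref{eq:quadrseconddyadcond} for $\chi_j^\nu$.

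For \eqref{eq:quadrseconddyadcond2} I choose coordinates with $\xi_j^\nu=e_1$ and restrict to large $j$, the bounded values being absorbed into \eqref{eq:quadrseconddyadcond}. On $\Gamma_j^\nu$ one has $\xi_1\approx |\xi|$ and $|\xi_i|\lesssim 2^{-j/2}|\xi|$ for $i\geq 2$; by zero-homogeneity of $\chi_j^\nu$ I write $\chi_j^\nu(\xi)=G(\xi'/\xi_1)$, with $G\in\mathcal C^\infty(\R^{n-1})$ supported in $\{|u|\lesssim 2^{-j/2}\}$ and verifying $|\partial_u^\beta G|\lesssim 2^{j|\beta|/2}$ by \eqref{eq:quadrseconddyadcond} applied at $\xi=(1,u)$. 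The elementary identity $\partial_{\xi_1}^n(\xi_i/\xi_1)=(-1)^n n!\,\xi_i/\xi_1^{n+1}$ gives $|\partial_{\xi_1}^n(\xi_i/\xi_1)|\lesssim 2^{-j/2}|\xi|^{-n}$ on the support for $i\geq 2$. A multivariate Fa\`a di Bruno applied to $G(\xi'/\xi_1)$ then expresses $\partial_{\xi_1}^N\chi_j^\nu$ as a sum of terms of the form $(\partial_u^\beta G)\prod_k \partial_{\xi_1}^{n_k}(\xi'/\xi_1)$ with $|\beta|=p$ and $n_1+\cdots+n_p=N$, leading to
\[
|\partial_{\xi_1}^N\chi_j^\nu(\xi)| \lesssim \sum_{p=0}^{N} 2^{jp/2}\,(2^{-j/2})^{p}\,|\xi|^{-N} \lesssim |\xi|^{-N}.
\]

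The main obstacle is this last step: transverse derivatives of $\chi_j^\nu$ on the sphere are genuinely of size $2^{j/2}$, so \eqref{eq:quadrseconddyadcond} alone cannot produce \eqref{eq:quadrseconddyadcond2}. The essential point is that the $\xi_1$-direction is almost radial on $\Gamma_j^\nu$, and the representation $\chi_j^\nu(\xi)=G(\xi'/\xi_1)$ makes explicit that every $\xi_1$-derivative acts through the small transverse ratio $\xi'/\xi_1\lesssim 2^{-j/2}$. The $p$ inner factors per Fa\`a di Bruno term then precisely cancel the $2^{jp/2}$ coming from the $p$-fold $u$-derivative of $G$, leaving the $j$-free estimate.
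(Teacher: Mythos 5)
Your proof is correct. For the smoothness, support, summation-to-one and the general bound \eqref{eq:quadrseconddyadcond} you argue essentially as the paper does (homogeneity of degree zero of $\xi/|\xi|$, a factor $2^{j/2}$ per derivative of $\phi$, bounded overlap of the $\eta_j^\nu$ and $E\geq 1$ to handle the quotient), only more explicitly. For the directional bound \eqref{eq:quadrseconddyadcond2} your route differs from the paper's: the paper writes, on the support of $\chi_j^\nu$, $\partial_{\xi_1}=\partial_r+O(2^{-j/2})\cdot\nabla_\xi$ and uses $\partial_r^N\chi_j^\nu=0$ by zero-homogeneity, whereas you exploit the same homogeneity to substitute $\chi_j^\nu(\xi)=G(\xi'/\xi_1)$ with $G$ supported in $\{|u|\lesssim 2^{-j/2}\}$, $|\partial_u^\beta G|\lesssim 2^{j|\beta|/2}$, and then run Fa\`a di Bruno on $\partial_{\xi_1}^N$, where each inner factor $\partial_{\xi_1}^{n_k}(\xi_i/\xi_1)=(-1)^{n_k}n_k!\,\xi_i/\xi_1^{n_k+1}$ contributes the crucial $2^{-j/2}|\xi|^{-n_k}$ that cancels the $2^{jp/2}$ from the outer derivatives. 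Both arguments rest on the same geometric fact (on the narrow cone the $\xi_1$-direction is radial up to an error $O(2^{-j/2})$, and radial derivatives annihilate degree-zero homogeneous functions), but your version has the advantage of being closed-form: it avoids having to iterate the vector-field identity $N$ times and to track derivatives falling on the $\xi$-dependent $O(2^{-j/2})$ coefficients, a point the paper's one-line proof leaves implicit; the paper's argument, in exchange, is shorter and dispenses with the change of variables. Your reduction of small $j$ to \eqref{eq:quadrseconddyadcond} is the right way to ensure $\xi_1\approx|\xi|>0$ on $\Gamma_j^\nu$ in the remaining cases, and the only cosmetic slip is the range $p=0,\dots,N$ in your final sum, since for $N\geq 1$ the Fa\`a di Bruno expansion only contains terms with $p\geq 1$; this does not affect the estimate.
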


\begin{proof}
In proving \eqref{eq:quadrseconddyadcond} we note that the argument of $\eta_j^\nu$ contains a factor of $2^{j/2}$ followed by something that is homogeneous of degree zero. Hence $\alpha$ derivatives yield a factor of $2^{j|\alpha|/2}$ and a function that is homogeneous of degree $-\abs\alpha$. To prove \eqref{eq:quadrseconddyadcond2} one observes that in the support of $\chi^\nu_j$ one can write $$\partial_{\xi_1}= \partial_{r}+O(2^{-j/2}) \cdot\nabla_\xi,$$
where $\partial_{r}$ is the radial derivative and $\partial_{r}^N \chi^\nu_j=0$  since $\chi^\nu_j$ is homogeneous of degree zero.
\end{proof}

If $\psi_j$ is chosen as in Definition \ref{def:LP} and we sum in both $j$ and $\nu$ one has
\begin{equation}\label{partofunity}
\psi_{0}(\xi)+\sum_{j=1}^{\infty}\sum_{\nu}\chi_{j}^{\nu}(\xi)\,\psi_{j}(\xi)=1, \quad \text{for all } \xi\in \mathbb{R}^{n}.
\end{equation}

When using the second dyadic decomposition we will split the phase $\varphi(x,\xi)-y\cdot\xi$ into two different pieces. The following lemma estimates one of the pieces.

\begin{Lem}\label{lem:h}
Define
$$h_j^\nu(x,\xi):= \varphi(x,\xi) - \xi\cdot\nabla_\xi \varphi(x,\xi_j^\nu).$$
Then on the support of $\chi_{j}^{\nu}(\xi)\psi_{j}(\xi)$, one has that
$$\abs{\partial_{\xi'}^\alpha h_j^\nu(x,\xi)} \lesssim \begin{cases}
\abs {\xi'} \abs \xi^{-1} & \text{ if } \abs\alpha=1 \\
\abs \xi ^{1-\abs\alpha} & \text{ if } \abs\alpha \geq 2
\end{cases} \,\leq 2^{-j\abs\alpha/2}$$
and
$$\abs{\partial_{\xi_1}^N h_j^\nu(x,\xi)}\lesssim \abs{\xi'}^2 \abs \xi ^{-N-1} \approx 2^{j} 2^{-j(N+1)}=2^{-jN}.$$
\end{Lem}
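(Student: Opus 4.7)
The plan is to split the estimates by order and direction of differentiation, and to exploit two features of the setup: the linearity of $\xi\cdot\nabla_\xi\varphi(x,\xi_j^\nu)$ in $\xi$, which annihilates the second term of $h_j^\nu$ under any $\xi$-derivative of order $\geq 2$, and the Euler homogeneity relations, which force certain derivatives of $\varphi$ to vanish along the axis through $\xi_j^\nu$. Fix coordinates so that $\xi_j^\nu=e_1$. On the support of $\chi_j^\nu\psi_j$ one has $|\xi|\approx 2^j$, $|\xi'|\lesssim 2^{j/2}$, and, writing $\omega:=\xi/|\xi|$, the elementary identity $|\omega-e_1|\approx|\omega'|=|\xi'|/|\xi|$ holds because $\omega_1\approx 1$.

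For $|\alpha|\geq 2$ with $\partial_{\xi'}^\alpha$ one has $\partial_{\xi'}^\alpha h_j^\nu(x,\xi)=\partial_{\xi'}^\alpha\varphi(x,\xi)$, and the $\Phi^2$ estimate then gives $|\partial_\xi^\alpha\varphi(x,\xi)|\lesssim|\xi|^{1-|\alpha|}\approx 2^{j(1-|\alpha|)}$, which is $\leq 2^{-j|\alpha|/2}$ precisely when $|\alpha|\geq 2$. For $|\alpha|=1$ the derivative is $\partial_{\xi_k}$ for some $2\leq k\leq n$, and I would write
\m{
\partial_{\xi_k}h_j^\nu(x,\xi)=\partial_{\xi_k}\varphi(x,\xi)-\partial_{\xi_k}\varphi(x,\xi_j^\nu),
}
use that $\partial_{\xi_k}\varphi$ is homogeneous of degree zero to replace $\xi$ by $\omega$ in the first term, and then apply the mean value theorem along the segment from $e_1$ to $\omega$ together with the $\Phi^2$ bound $|\partial_\xi\partial_{\xi_k}\varphi|\lesssim 1$ on that segment to conclude $|\partial_{\xi_k} h_j^\nu(x,\xi)|\lesssim|\omega-e_1|\lesssim|\xi'|/|\xi|$, as claimed.

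The substantive work is the sharper bound $|\partial_{\xi_1}^N h_j^\nu(x,\xi)|\lesssim|\xi'|^2|\xi|^{-N-1}$, which beats the direct $\Phi^2$ estimate $|\xi|^{1-N}$ by a factor of $|\xi'|^2|\xi|^{-2}\approx 2^{-j}$. The key observations are two consequences of Euler homogeneity on the ray $\xi=te_1$: since $\varphi(x,te_1)=t\,\varphi(x,e_1)$, we have $\partial_{\xi_1}^N\varphi(x,te_1)=0$ for every $N\geq 2$; and since $\partial_{\xi_k}\varphi$ is homogeneous of degree zero, $\partial_{\xi_k}\varphi(x,te_1)$ is independent of $t$, so $\partial_{\xi_1}^M\partial_{\xi_k}\varphi(x,te_1)=0$ for every $M\geq 1$ and every $k$. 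For $N\geq 2$ this says that $\partial_{\xi_1}^N\varphi$ and all of its first-order $\xi$-derivatives vanish at $|\xi|e_1$, so a second-order Taylor expansion in $\xi-|\xi|e_1$ together with the $\Phi^2$ bound $|\partial_\xi^\beta\partial_{\xi_1}^N\varphi(x,\eta)|\lesssim|\eta|^{-1-N}$ for $|\beta|=2$ and the identity $|\xi-|\xi|e_1|^2\approx|\xi'|^2$ yields the estimate. For $N=1$ I would apply the same second-order Taylor argument to $\partial_{\xi_1}\varphi$ directly: the vanishing statements give $\nabla_\xi\partial_{\xi_1}\varphi(x,e_1)=0$, so $|\partial_{\xi_1}\varphi(x,\xi)-\partial_{\xi_1}\varphi(x,\xi_j^\nu)|\lesssim|\xi-|\xi|e_1|^2\,|\xi|^{-2}\lesssim|\xi'|^2|\xi|^{-2}$. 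The hard part is precisely identifying and exploiting these pointwise vanishing statements along the axis, since without them one loses exactly the factor $|\xi'|^2|\xi|^{-2}\approx 2^{-j}$ that the lemma asserts.
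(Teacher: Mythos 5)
Your proposal is correct and is essentially the paper's own argument: the paper disposes of this lemma with the single remark that it follows from ``simple Taylor expansions and homogeneity considerations'' (with a reference to Stein's book, p.~407), and the details you supply are exactly those — linearity of $\xi\cdot\nabla_\xi\varphi(x,\xi_j^\nu)$ for $|\alpha|\geq 2$, degree-zero homogeneity of $\nabla_\xi\varphi$ plus the mean value theorem for $|\alpha|=1$, and the Euler-homogeneity vanishing of $\partial_{\xi_1}^N\varphi$ and its first-order $\xi$-derivatives along the ray through $\xi_j^\nu$, combined with a second-order Taylor expansion and the $\Phi^2$ bounds, to capture the crucial gain $|\xi'|^2|\xi|^{-2}\approx 2^{-j}$. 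No gaps beyond routine checks (e.g.\ that the segments used stay in $|\eta|\approx|\xi|$), which your argument implicitly accommodates.
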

\begin{proof}
The proof is based on simple Taylor expansions and homogeneity considerations, see \cite[p. 407]{Stein}.
\end{proof}



In \cite{SSS} the authors define an \lq\lq influence set" associated to the SND phase function $\varphi$. We have to make a similar definition but it has to be fitted to the more general classes of amplitudes that we are considering here. To this end we have

\begin{defn}\label{def:influenceset}
Let $\bar y\in\R^n$ be the centre of a ball $B$ with radius $r$, and set $\mu:=\min(\rho, 1/2)$ with $\rho\in (0,1]$. Define
\m{\tilde R_j^\nu := \set{y\in \Rn : |y-\bar y|\leq c 2^{-\mu j}, \quad \abs{\pi_j^\nu (y-\bar y)} \leq c 2^{-\rho j}},}
where $\pi_j^\nu$ is the orthogonal projection in the direction $\xi_j^\nu$ and $c$ is a large constant depending on the size of the various Hessians of $\varphi$ but independent of $j$, to be specified later. We also define $R_j^\nu$ as the preimage of $\tilde R_j^\nu$ under the mapping $x\to \nabla_\xi \varphi(x,\xi_j^\nu)$, i.e.
\nm{eq:rectangles}{
R_j^\nu := \set{x\in \Rn : |\nabla_\xi \varphi(x,\xi_j^\nu)-\bar y|\leq c 2^{-\mu j}, \quad \abs{\pi_j^\nu (\nabla_\xi \varphi(x,\xi_j^\nu)-\bar y)} \leq c 2^{-\rho j}}.
}
Now set
\nm{eq:Bstar}{B^* = \bigcup_{2^{-j}\leq r }\bigcup_\nu R_j^\nu}
and recall that the number of $\nu$'s in the union above is $O\big(2^{j(n-1)/2}\big).$
\end{defn}
In this connection we have the the following estimates.

\begin{Lem}\label{measure of Bstar}
Let $\bar y\in\R^n$ be the centre of a ball $B$ with radius $r$ and let $B^*$ be defined as in \eqref{eq:Bstar}. Then
\begin{enumerate}
    \item[$(i)$] $B^*$ satisfies the estimate
        \m{\abs{B^*} \lesssim r^{\rho +(\mu -1/2) (n-1)}.}
    \item[$(ii)$] If $x\in \R^n \setminus B^*$, $y\in B(\bar y, r)$ and k is chosen such that $r\approx 2^{-k}$, then for $c$ in \eqref{eq:rectangles} large enough, we have
    \begin{equation}\label{eq:keypoint}
        2^{j\rho}\abs{(\nabla_\xi \varphi(x,\xi_j^\nu)-y)_1}+ 2^{j\mu}|(\nabla_\xi \varphi(x,\xi_j^\nu)-y)'|  \gtrsim 2^{(j-k)\rho /2}, \quad j \geq k.
    \end{equation}\hspace*{1cm}
\end{enumerate}

\end{Lem}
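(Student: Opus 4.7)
My approach is to handle $(i)$ by a direct volume count, and $(ii)$ by a contrapositive argument that exploits the degree-one homogeneity of the phase.

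For $(i)$, the SND hypothesis on $\varphi$ together with the $\Phi^2$-estimates ensures that for each fixed $\nu$ the map $\mathbf t_j^\nu(x) := \nabla_\xi \varphi(x,\xi_j^\nu)$ is a global diffeomorphism of $\R^n$ whose Jacobian is bounded from above and below uniformly in $j$ and $\nu$ (this is the same device already used earlier in Section~\ref{RS globalisation}). Hence $|R_j^\nu| \approx |\tilde R_j^\nu|$, and $\tilde R_j^\nu$ is essentially a slab of thickness $\sim c\,2^{-\rho j}$ along $\xi_j^\nu$ and transverse radius $\sim c\,2^{-\mu j}$, so $|\tilde R_j^\nu| \lesssim c^n\, 2^{-\rho j-\mu(n-1)j}$. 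Since there are $O(2^{j(n-1)/2})$ indices $\nu$ at level $j$, summing in $\nu$ gives $\sum_\nu |R_j^\nu|\lesssim 2^{-j[\rho+(\mu-1/2)(n-1)]}$; once this exponent is positive (which is automatic for $\rho\geq 1/2$), the geometric series over $j \geq k$ is dominated by its first term and produces $|B^*| \lesssim r^{\rho+(\mu-1/2)(n-1)}$.

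For $(ii)$, put $p := \nabla_\xi \varphi(x,\xi_j^\nu)$ and $w:=p-y$. I will argue by contradiction, assuming
\[
2^{j\rho}|w_1|+2^{j\mu}|w'| < c_0\,2^{(j-k)\rho/2}
\]
for some small $c_0$ to be chosen. The plan is to choose $j':=k$ and pick $\nu'$ with $|\xi_k^{\nu'}-\xi_j^\nu|<2^{-k/2}$ (guaranteed by property~$(ii)$ of the lattice $\{\xi_j^\nu\}$), and then show that $\nabla_\xi\varphi(x,\xi_k^{\nu'})\in \tilde R_k^{\nu'}$, which forces $x\in R_k^{\nu'}\subset B^*$, contradicting $x\notin B^*$. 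The displacement $u:=\nabla_\xi\varphi(x,\xi_k^{\nu'})-\nabla_\xi\varphi(x,\xi_j^\nu)$ is estimated in two different ways: the mean value theorem and the $\Phi^1$-estimate $|\partial_\xi^2\varphi(x,\xi)|\lesssim|\xi|^{-1}$ give the coarse bound $|u|\lesssim 2^{-k/2}$, while Euler's identity $\xi\cdot\nabla_\xi\varphi(x,\xi)=\varphi(x,\xi)$ (valid since $\varphi$ is positively homogeneous of degree one in $\xi$) forces $\nabla_\xi^2\varphi(x,\xi)\,\xi\equiv 0$, and a second-order Taylor expansion then supplies the refined quadratic estimate $|u\cdot \xi_k^{\nu'}|\lesssim 2^{-k}$ in the longitudinal direction. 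Combined with the smallness of $w$ coming from the contradiction hypothesis and the bound $|y-\bar y|\leq r \approx 2^{-k}$, one then verifies $|\nabla_\xi\varphi(x,\xi_k^{\nu'})-\bar y|\leq c\,2^{-\mu k}$ and $|\pi_k^{\nu'}(\nabla_\xi\varphi(x,\xi_k^{\nu'})-\bar y)|\leq c\,2^{-\rho k}$ provided $c$ is taken large enough relative to $c_0$ and the seminorms of $\varphi$. The two sub-regimes $\rho\leq 1/2$ (with $\mu=\rho$) and $\rho>1/2$ (with $\mu=1/2$) must be book-kept separately, but in each case every competing contribution stays at or below the allowed threshold.

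The central obstacle is that a naive one-level estimate at scale $j$ is doomed: the triangle inequality only yields $\|w\|_j\geq\|p-\bar y\|_j-\|y-\bar y\|_j$, and the error term $\|y-\bar y\|_j$ is already of size $2^{(j-k)\rho}$, which eclipses the desired lower bound $2^{(j-k)\rho/2}$ once $j$ is large. Passing to the coarser scale $j'=k$ circumvents this, but only because the quadratic-in-angle cancellation coming from the degree-one homogeneity of $\varphi$ turns $|u\cdot \xi_k^{\nu'}|$ into an $O(2^{-k})$ quantity rather than $O(2^{-k/2})$; the latter would be too large to fit inside the thin direction $c\,2^{-\rho k}$ of $\tilde R_k^{\nu'}$ in the regime $\rho>1/2$, and it is precisely this geometric fact that makes the contradiction close uniformly in $j\geq k$.
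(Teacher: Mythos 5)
Your proposal is correct and takes essentially the same route as the paper: part $(i)$ is the same volume count ($O(2^{j(n-1)/2})$ rectangles of measure $\lesssim 2^{-j\rho-j\mu(n-1)}$ per level, summed over $2^{-j}\leq r$), and part $(ii)$ is the paper's argument in contrapositive form, reducing to the coarser scale $k$, choosing $\xi_k^{\nu'}$ with $|\xi_k^{\nu'}-\xi_j^\nu|\leq 2^{-k/2}$, and using the mean value theorem transversally together with the quadratic longitudinal gain coming from the degree-one homogeneity. Your derivation of that gain via $\nabla^2_\xi\varphi(x,\xi)\,\xi=0$ and a second-order Taylor expansion is equivalent to the paper's estimate \eqref{eq:SSStrick} obtained through $h^{\nu'}_k(x,\xi)=\varphi(x,\xi)-\nabla_\xi\varphi(x,\xi_k^{\nu'})\cdot\xi$ and Euler's identity, so only the logical packaging (contradiction placing $x\in R_k^{\nu'}$ versus the paper's direct triangle-inequality chain against $x\notin R_k^{\nu'}$) differs.
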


\begin{proof} \quad\\
$(i)$ Since $R_j^\nu$ is of size $O(2^{-\rho j})$ in the $\xi_j^\nu$-direction and $O(2^{-\mu j})$ in the other $n-1$ directions, we have
\m{\abs{B^*} \leq \sum_{2^{-j}\leq r} \sum_{\nu} \abs{R_j^\nu} \lesssim \sum_{2^{-j}\leq r} 2^{j(n-1)/2}\,2^{-j(\rho +\mu (n-1))}\lesssim r^{\rho +(\mu -1/2) (n-1)}.}

$(ii)$ Observe that \eqref{eq:keypoint} is equivalent to
\begin{equation*}
2^{(j+k)\rho/2} \abs{(\nabla_\xi \varphi(x,\xi_j^\nu)-  y)_1}
+2^{j \mu + (k-j)\rho/2}|(\nabla_\xi \varphi(x,\xi_j^\nu)-  y)'|
\gtrsim 1, \quad j \geq k.
\end{equation*}
Moreover, using that $j \geq k$, it is enough to show that
\begin{equation*}\label{lower bound estimate3}
2^{k\rho} \abs{(\nabla_\xi \varphi(x,\xi_j^\nu)-  y)_1}
+2^{k \mu}|(\nabla_\xi \varphi(x,\xi_j^\nu)-  y)'|
\gtrsim 1.
\end{equation*}
By construction, there exists a unit vector $\xi^{\nu'}_k$ with
$|\xi^{\nu'}_k -\xi^{\nu}_j|\leq 2^{-k/2}.$
Since
$\R^n \setminus B^*\subset \R^n \setminus R_k^{\nu'}$ then the assumption in ($ii$) yields that $x\in \R^n \setminus R_k^{\nu'}$ and therefore
\begin{equation*}
 2^{k\rho}|\pi_k^{\nu'} (\nabla_\xi \varphi(x,\xi_k^{\nu'})-\bar y)|+2^{k\mu}|\nabla_\xi \varphi(x,\xi_k^{\nu'})-\bar y| \geq c,
\end{equation*}
which implies that
\begin{equation*}
2^{k\rho}|(\nabla_\xi \varphi(x,\xi_k^{\nu'})-\bar y)_1|
+ 2^{k\mu}\abs{ (\nabla_\xi \varphi(x,\xi_k^{\nu'})-\bar y)'}
 \geq c/2,
\end{equation*}
for $c$ sufficiently large. On the other hand, for $|\xi^{\nu}_j-\xi^{\nu'}_k|\leq 2^{-k/2}$ one has
\begin{equation}\label{eq:SSStrick}
|(\nabla_\xi \varphi (x,\xi^{\nu}_j) - \nabla_\xi \varphi (x,\xi^{\nu'}_k))\cdot \xi^{\nu'}_k|\lesssim 2^{-k}.
\end{equation}
Indeed using the homogeneity we have
\begin{equation*}
\begin{split}
&(\nabla_\xi \varphi (x,\xi^{\nu}_j) - \nabla_\xi \varphi (x,\xi^{\nu'}_k))\cdot \xi^{\nu'}_k\\&\qquad = (\nabla_\xi \varphi (x,\xi^{\nu}_j) - \nabla_\xi \varphi (x,\xi^{\nu'}_k))\cdot (\xi^{\nu'}_k- \xi^{\nu}_j)+ \varphi (x,\xi^{\nu}_j) - \nabla_\xi \varphi (x,\xi^{\nu'}_k)\cdot \xi^{\nu}_j\\&\qquad = (\nabla_\xi \varphi (x,\xi^{\nu}_j) - \nabla_\xi \varphi (x,\xi^{\nu'}_k))\cdot (\xi^{\nu'}_k- \xi^{\nu}_j)+ h^{\nu'}_k(x,\xi^{\nu}_{j}),
\end{split}
\end{equation*}
where
$$h^{\nu'}_k(x,\xi) :=\varphi (x,\xi) - \nabla_\xi \varphi (x,\xi^{\nu'}_k)\cdot \xi.$$
Now the important fact is that $h^{\nu'}_k(x,\xi^{\nu'}_k)=0$ and therefore using the mean value theorem and the estimate $|\nabla_{\xi} h^{\nu'}_k(x,\xi)|\lesssim 2^{-k/2}$, one readily sees that $h^{\nu'}_k(x,\xi^\nu_j)= O(2^{-k}).$
For the term $ (\nabla_\xi \varphi (x,\xi^{\nu}_j) - \nabla_\xi \varphi (x,\xi^{\nu'}_k))\cdot (\xi^{\nu'}_k- \xi^{\nu}_j)$ we just use the mean value theorem, which concludes the proof of \eqref{eq:SSStrick}.\\

Now to show \eqref{eq:keypoint}, we use the triangle inequality and \eqref{eq:SSStrick} to obtain
\begin{align*}
& 2^{k\rho}\abs{(\nabla_\xi \varphi(x,\xi_j^\nu)-  y)_1}
+ 2^{k \mu}|(\nabla_\xi \varphi(x,\xi_j^\nu)-  y)'| \\
& \qquad =
2^{k\rho}\abs{\big(\nabla_\xi \varphi(x,\xi_j^\nu)-\bar y - (y-\bar{y})\big)_1}
+ 2^{k\mu}\big|\big(\nabla_\xi \varphi(x,\xi_j^\nu)-\bar y - (y-\bar{y})\big)'\big| \\
& \qquad \geq
2^{k\rho}\abs{\big(\nabla_\xi \varphi(x,\xi_j^\nu)-\bar y \big)_1} - 2^{k\rho}\abs{y_1-\bar{y}_1}
+ 2^{k\mu}\big|\big(\nabla_\xi \varphi(x,\xi_j^\nu)-\bar y \big)'\big|
\\
& \qquad\qquad- 2^{k\mu}|y'-\bar{y}'|
\\
& \qquad \geq
2^{k\rho}\big|\big(\nabla_\xi \varphi(x,\xi_j^\nu)-\bar y \big)_1\big|
+ 2^{k\mu}\big|\big(\nabla_\xi \varphi(x,\xi_j^\nu)-\bar y \big)'\big|
-2^{k\rho}2^{1-k} -2^{k\mu}2^{1-k}
\\
& \qquad =
2^{k\rho}\big|\big(\nabla_\xi \varphi(x,\xi_j^\nu) - \nabla_\xi \varphi(x,\xi_k^{\nu'}) + \nabla_\xi \varphi(x,\xi_k^{\nu'})-\bar y \big)_1\big|
\\
& \qquad \qquad + 2^{k\mu}\big|\big(\nabla_\xi \varphi(x,\xi_j^\nu) - \nabla_\xi \varphi(x,\xi_k^{\nu'}) + \nabla_\xi \varphi(x,\xi_k^{\nu'})-\bar y \big)'\big|
-2^{k\rho}2^{1-k}
\\
& \qquad\qquad-2^{k\mu}2^{1-k}
\\
& \qquad \geq
2^{k\rho}\big|\big( \nabla_\xi \varphi(x,\xi_k^{\nu'})-\bar y \big)_1\big|
 -
2^{k\rho}
\abs{\big(\nabla_\xi \varphi(x,\xi_j^\nu) - \nabla_\xi \varphi(x,\xi_k^{\nu'}) \big)_1}
\\
& \qquad \qquad + 2^{k\mu}\big|\big( \nabla_\xi \varphi(x,\xi_k^{\nu'})-\bar y \big)'\big|
- 2^{k\mu}\big|\big(\nabla_\xi \varphi(x,\xi_j^\nu) - \nabla_\xi \varphi(x,\xi_k^{\nu'}) \big)'\big|
\\
& \qquad \qquad  -2^{k\rho}2^{1-k} -2^{k\mu}2^{1-k}
\\
& \qquad \geq
 2^{k\rho}\big|\big( \nabla_\xi \varphi(x,\xi_k^{\nu'})-\bar y \big)_1\big|
+  2^{k\mu}\big|\big( \nabla_\xi \varphi(x,\xi_k^{\nu'})-\bar y \big)'\big|
\\
& \qquad \qquad
- 2^{k\rho}A 2^{-k}
- 2^{k\mu}C|\xi^{\nu}_j-\xi^{\mu}_k|
-2^{k\rho}2^{1-k} -2^{k\mu}2^{1-k}
\\
& \qquad \geq
 \frac{c}{2}
- 2^{k\rho}A  2^{-k}
- C 2^{k\mu}2^{-k/2}
-2^{k\rho}2^{1-k} -2^{k\mu}2^{1-k}
\\
& \qquad = \frac{c}{2}
- A 2^{-k(1-\rho)}
- C 2^{-k(1/2-\mu)}
-2^{1-k(1-\rho)} -2^{1-k(1-\mu)}
\\
& \qquad \geq \frac{c}{2}
- A
- C
-4.
\end{align*}
Therefore picking $c$ large enough we obtain \eqref{eq:keypoint}.
\end{proof}

Finally, having the partition of unity \eqref{partofunity} we can decompose $$T^{\varphi}_{a}= \sum_{j=0}^\infty T_j, $$
where
\begin{equation}\label{Tj}
    T_jf(x) := \sum_{\nu} T_j^\nu f(x)
\end{equation}
and
\m{T_j^\nu f(x) := \int_{\Rn} e^{i\varphi(x,\xi)}\,\chi_j^\nu(\xi)\,\psi_j(\xi)\,a(x,\xi)\,\widehat f (\xi) \ddd \xi .}

\section{\texorpdfstring{$L^p$}--results}\label{Lp section}
In this section we prove our main $L^p$-boundedness results for FIOs with general H\"ormander-class amplitudes. This generalizes the results of Seeger-Sogge-Stein in two ways. First of all this is a global regularity result and opposed to the local one in \cite{SSS}. Second, we consider all possible values of $\rho$'s and $\delta$'s as opposed to just $\rho\in [1/2, 1]$ and $\delta=1-\rho.$

\begin{Rem}\label{low freq rem}
We note that in what follows we can confine ourselves to the case of amplitudes $a(x, \xi)$ that vanish in a neighbourhood of the $\xi=0$. Indeed the contribution of the low-frequency portion $($i.e. the portion with compact $\xi$-support$)$ has been shown to be $L^p$-bounded for $1\leq p\leq \infty$  in \cite[Theorem 1.18]{DS}.
\end{Rem}

\subsection{Exotic amplitudes}
We start by proving the $L^p$-boundedness of exotic FIOs with amplitudes in $a\in S^m_{\rho,\delta}(\Rn)$ with $\rho=0$, $0 \leq \delta< 1$.

\begin{Th}\label{thm:LpResultexotic}
Let $n\geq 1$, $0\leq \delta<1$, $a\in S^m_{0,\delta}(\Rn)$, and assume that $\varphi\in\Phi^2$ is \emph{SND}. Then for
$$m= -n\Big | \frac{1}{p}-\frac{1}{2}\Big|-\frac{n\delta}{2}$$
the \emph{FIO} $T_a^\varphi$ is $L^p$-bounded for $1<p<\infty.$
\end{Th}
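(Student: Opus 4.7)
The plan is to obtain the $L^p$-bound by complex interpolation of an $\mathscr{H}^1\to L^1$ endpoint estimate (at $m=-n(1+\delta)/2$) with the $L^2$-bound of Proposition \ref{basicL2} (at $m=-n\delta/2$), together with a dual argument for $2<p<\infty$ (using the adjoint reduction at the end of Section \ref{RS globalisation}). I would first apply the preliminary reduction at the very beginning of Section \ref{RS globalisation}, which does not depend on $\rho>0$, to put the phase in the form $\theta(x,\xi)+x\cdot\xi$ with $\theta\in\Phi^1$ and the total phase SND, and then use Remark \ref{low freq rem} to discard the low-frequency contribution. Crucially, the Ruzhansky--Sugimoto transference from the rest of Section \ref{RS globalisation} is unavailable, because the kernel bound of Lemma \ref{Lem:kernel RuzhSug} requires $L>n/\rho$; hence the analysis must be global from the outset.

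For the endpoint bound, fix an $\mathscr{H}^1$-atom $\at$ supported in $B=B(\bar x,r)$ with $\|\at\|_\infty\leq|B|^{-1}$ and $\int\at=0$, and dyadically decompose $a=\sum_{j\geq 1}a_j$ with $a_j:=a\,\psi_j$. Because the spatial regularity of $a_j$ operates on scale $2^{-j\delta}$ in the exotic setting, I would replace the thicknesses $2^{-\mu j}$, $2^{-\rho j}$ of Definition \ref{def:influenceset} (which both degenerate for $\rho=0$) by a single $\delta$-adapted scale, and omit the directional refinement of Section \ref{SSS decomposition} altogether, building in this way an influence set $B^*\supset B$ whose volume is controlled by an appropriate positive power of $r$ in the spirit of Lemma \ref{measure of Bstar}(i). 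The estimate then splits along the two-region template. On $B^*$ I would apply Cauchy--Schwarz with the $L^2$-bound, exploiting the fact that the chosen $m$ sits $n/2$ orders below the $L^2$-critical order to gain a factor of $r^{-n/2}$ that pairs with $\|\at\|_{L^2}\lesssim r^{-n/2}$ and with $|B^*|^{1/2}$. On $\R^n\setminus B^*$ I would use the vanishing moment of $\at$ to write $T_j\at(x)=\int[K_j(x,y)-K_j(x,\bar x)]\,\at(y)\,\dd y$ and estimate the kernel difference by integration by parts in $\xi$ along the phase $\varphi(x,\xi)-y\cdot\xi$, using the SND lower bound for $|\nabla_\xi\varphi(x,\xi)-y|$ in the manner of the proof of Lemma \ref{measure of Bstar}(ii) to extract the necessary decay, since no $\xi$-decay is available from differentiating the exotic amplitude itself.

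The hardest part will be calibrating the $\delta$-adapted influence set. Because $\rho=0$ simultaneously disables the Ruzhansky--Sugimoto globalisation of Section \ref{RS globalisation} and the directional Seeger--Sogge--Stein decomposition of Section \ref{SSS decomposition}, one must carry out an \emph{ad hoc} construction that absorbs both failures at once; moreover, the entire far-field decay has to come from the phase rather than from the symbol. The loss $-n\delta/2$ in the critical order of $m$ is precisely what is needed to absorb the absence of $\xi$-decay when differentiating $a\in S^m_{0,\delta}$, and it matches both the $L^2$ order gap in Proposition \ref{basicL2} and the pseudodifferential results of Coifman--Meyer and Alvarez--Hounie mentioned in the introduction.
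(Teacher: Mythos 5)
Your overall interpolation template (an atomic endpoint estimate plus the $L^2$ bound of Proposition \ref{basicL2}, and duality for $2<p<\infty$) is the same one the paper uses, and your preliminary reductions (the phase reduction of Section \ref{RS globalisation}, Remark \ref{low freq rem}, and the observation that the Ruzhansky--Sugimoto transference needs $\rho>0$) are sound. The gap is in the endpoint itself. The paper does not prove $\mathscr{H}^1\to L^1$; it proves $\mathscr{H}^p\to L^p$ for some $p<1$, and this is not cosmetic. In the exotic class a $\xi$-derivative of $a_j:=a\,\psi_j$ gains nothing, so the only usable size information on the $j$-th piece is the weighted $L^2_x$ kernel bound $\|(1+|x-y|)^M K_j(x,\cdot)\|_{L^2_x}\lesssim 2^{j(m+n/2+n\delta/2)}$, which at order $m=-n(1/p-1/2)-n\delta/2$ yields $\|S_j\at\|_{L^p(\R^n\setminus 2B)}\lesssim (2^jr)^{n-n/p}$. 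At $p=1$ the exponent $n-n/p$ vanishes: there is no decay in $j$ in the regime $2^j\gtrsim r^{-1}$, and the single vanishing moment of an $\mathscr{H}^1$-atom only helps when $2^j\lesssim r^{-1}$. Your sketch supplies no mechanism for summing over large $j$ at the $p=1$ scaling; the paper's way out is precisely to take $p<1$ (so that $n-n/p<0$) and to exploit the correspondingly higher vanishing moments $N=[n(1/p-1)]$ in the small-$j$ regime.

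Second, the far-field construction you lean on cannot work at $\rho=0$. Integration by parts in $\xi$ against $\varphi(x,\xi)-y\cdot\xi$ gains, per derivative, only an inverse power of the unscaled distance $|\nabla_\xi\varphi(x,\xi)-y|$ --- there is no accompanying factor $2^{-j\rho}$ or $2^{-j/2}$ --- so $K_j$ is localised around the image of $\nabla_\xi\varphi$ only at scale $O(1)$, not at any scale shrinking with $j$; no ``$\delta$-adapted'' analogue of the rectangles \eqref{eq:rectangles} exists, because $\delta$ measures spatial roughness and has nothing to do with the spatial localisation of $K_j$. Off any fixed influence set the resulting pointwise bound has size $2^{j(n+m)}=2^{jn(1-\delta)/2}$ times a $j$-independent integrable tail, which is not summable in $j$, and the one vanishing moment leaves $\sum_{2^j\leq r^{-1}}2^{j(n+m)}\,2^jr\approx r^{-(n+m)}$, which blows up as $r\to 0$. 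The paper avoids pointwise kernel bounds altogether: it writes $(x-y)^\alpha K_j(x,y)$ as $2^{j(m+n\delta/2)}$ times a uniformly $L^2$-bounded FIO (amplitude in $S^{-n\delta/2}_{0,\delta}$, Proposition \ref{basicL2}) acting on $\tau_{-y}\Psi_j^\vee$, which gives the global weighted estimate \eqref{eq:kernelestimate1}; since that estimate is already global, the simple split $2B$ versus $\R^n\setminus 2B$ suffices and no influence set or separate globalisation step is needed. To salvage your outline, replace the $\mathscr{H}^1$ endpoint and the influence set by this $\mathscr{H}^p$, $p<1$, argument built on the weighted $L^2_x$ kernel bounds.
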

\begin{proof}
Using the discussion in Section \ref{RS globalisation}, we shall from now on assume that $T_a^{\varphi}$ is of the form
$$Tf(x):= \int_{\Rn} a(x,\xi)\,e^{i(\theta(x,\xi)+x\cdot\xi)}\, \widehat{f}(\xi)\ddd\xi,$$ where $\theta\in \Phi^1$ and $\theta(x,\xi)+x\cdot\xi$ is SND.\\

Since the result has already been proven for the case when $p=2$ (see Proposition \ref{basicL2}) it only remains to show that $T$ and its adjoint map $\mathscr{H}^p(\Rn)$ to $L^p(\Rn)$ continuously, for some $p<1$, and thereafter interpolate these with the $L^2$-boundedness.\\

Due to the atomic decomposition in Definition \ref{def:Hpatom} of an element of $\mathscr{H}^p(\Rn)$, we would need to show that
\nm{eq:crucialintegral1}{\int_{\Rn} |T\at(x)|^p\dd x}
is uniformly bounded for every $\mathscr{H}^p$-atom $\at$, where the atom is supported in the ball $B:=B(x_0 ,r)$. To prove the assertion in the case $2<p<\infty$ we also need the uniform boundedness of \eqref{eq:crucialintegral1} for the adjoint operator $T^*$. However since the proof is almost identical to the case of $T$, we confine ourselves to this case.\\

We split $\Rn$ into $2B$ and $\Rn\setminus 2B$ and start with the case of $2B$. By H\"older's inequality and the $L^2$-boundedness of $T$, we have
\nm{eq:combining1}{
\Vert T \at \Vert_{L^{p}(2B)}
&\lesssim \Vert T \at \Vert_{L^{2}(2B)} \|1\|_{L^{2p/(2-p)}(2B)} \lesssim \|\at\|_{L^{2}( \Rn)} r^{n(2-p)/2p}  \\
&\lesssim r^{n(p-2)/2p}\, r^{n(2-p)/2p} = 1.
}

We proceed to the boundedness of
$\Vert T \at \Vert_{L^{p}( \Rn\setminus 2B)}$. We consider a generic Littlewood-Paley piece of the kernel of $T$, which we denote by $S_j$ and note that the integral kernel of $S_j$ is given by
\begin{equation}\label{eq:2Bornot2B}
K_j(x,y)
:=\int_{\Rn} a_j(x,\xi)\,e^{i\theta(x,\xi)+i(x-y)\cdot\xi}\ddd\xi,
\end{equation}
where $a_j(x,\xi) := a(x,\xi)\,\psi_j(\xi)$, and $j\geq 1$ due to Remark \eqref{low freq rem}. We claim that
\nm{eq:kernelestimate1}{
\Vert (x-y)^\alpha \,\partial^{\beta} _{y}K_{j}(x,y)\Vert_{L^2_x(\Rn)}
&\lesssim 2^{j(  \vert \beta\vert+ m+n/2+n \delta/2)}.
}
Since differentiating \eqref{eq:2Bornot2B} $\beta$ times in $y$ will only introduce factors of the size $2^{j|\beta|},$ it is enough to establish \eqref{eq:kernelestimate1} for $\beta=0$. Now the global $L^2$-boundedness \eqref{eq:kernelestimate1} of the kernel can be formulated as the $L^2$-boundedness of a kernel of the form
\begin{equation*}
\tilde K^\alpha_j(x,x-y):= \int_{\R^n} a_j(x,\xi)\,(x-y)^\alpha\, e^{i\theta(x,\xi)+i(x-y)\cdot\xi}  \ddd\xi.
\end{equation*}
To this end, take ${\Psi}_j$ as in Definition \ref{def:LP}, integrate by parts and rewrite
\m{
&\tilde K^\alpha_j(x,x-y)
=\int_{\Rn} a_j(x,\xi)\,e^{i\theta(x,\xi)}\,
(-i)^{|\alpha|}\,\partial_\xi^\alpha e^{i(x-y)\cdot\xi} \ddd\xi\\
& \quad = i^{|\alpha|} \int_{\Rn} \partial_\xi^\alpha \Big[a_j(x,\xi)e^{i\theta(x,\xi)}\Big]\, e^{i(x-y)\cdot\xi} \,{\Psi}_j (\xi)\ddd\xi \\
& \quad = \sum_{\alpha_1 + \alpha_2=\alpha} \!\!C_{\alpha_1, \alpha_2}\int_{\Rn}
\partial_\xi^{\alpha_1} a_j(x,\xi) \,
\partial_\xi^{\alpha_2} e^{i\theta(x,\xi)} \,e^{i(x-y)\cdot\xi}\, {\Psi}_j (\xi)\ddd\xi \\
& \quad = \!\!\sum_{\substack{\alpha_1 + \alpha_2=\alpha \\ \lambda_1 + \dots + \lambda_r = \alpha_2} }\!\!\!\!
C_{\alpha_1, \alpha_2, \lambda_1, \dots \lambda_r} \int_{\Rn}
\partial_\xi^{\alpha_1} a_j(x,\xi) \\
& \quad \qquad \qquad   \times
\partial_\xi^{\lambda_1}\theta(x,\xi)
\cdots
\partial_\xi^{\lambda_r}\theta(x,\xi)
\,e^{i\theta(x,\xi)} \,e^{i(x-y)\cdot\xi}\,{\Psi}_j (\xi) \ddd\xi \\
& \quad =\!\! \sum_{\substack{\alpha_1 + \alpha_2=\alpha \\ \lambda_1 + \dots + \lambda_r = \alpha_2} }\!\!\!\!
C_{\alpha_1, \alpha_2, \lambda_1, \dots \lambda_r}
2^{j(m+{n\delta}/2)}  \!\!
\int_{\Rn} b_j^{\alpha_1,\alpha_2, \lambda_1, \dots, \lambda_r}(x,\xi)\,
 e^{i\theta(x,\xi)+ix\cdot\xi}\, e^{-iy\cdot\xi}\,{\Psi}_j (\xi) \ddd\xi \\
& \quad =:\!\! \sum_{\substack{\alpha_1 + \alpha_2=\alpha \\ \lambda_1 + \dots + \lambda_r = \alpha_2} }
\!\!\!\!C_{\alpha_1, \alpha_2, \lambda_1, \dots \lambda_r}
2^{j(m+{n\delta}/2)} \,
S_{j}^{\alpha_1, \alpha_2, \lambda_1, \dots \lambda_r}(\tau_{-y}\Psi_j^\vee)(x),
}
where $S_{j}^{\alpha_1, \alpha_2, \lambda_1, \dots \lambda_r}$ is an FIO with  the phase function $\theta(x,\xi)+x\cdot\xi$ and amplitude $b_j^{\alpha_1,\alpha_2, \lambda_1, \dots, \lambda_r}(x,\xi)$ given by
\begin{equation*}
b_j^{\alpha_1,\alpha_2, \lambda_1, \dots, \lambda_r}(x,\xi)
 := 2^{-j(m+{n\delta}/2)}\, \partial_\xi^{\alpha_1} a_j(x,\xi) \,
\partial_\xi^{\lambda_1}\theta(x,\xi)
\dots
\partial_\xi^{\lambda_r}\theta(x,\xi).
\end{equation*}
Moreover  $|\lambda_j| \geq 1$ and $\tau_{-y}$ is a translation by $-y$.\\

We observe that
$b_j^{\alpha_1,\alpha_2, \lambda_1, \dots, \lambda_r}(x,\xi) \in S^{-n\delta/2}_{0,\delta}(\Rn)$ uniformly in $j$, since $a \in S^{m}_{0,\delta}(\Rn)$ and $\theta\in \Phi^1$.\\

Therefore by Proposition \ref{basicL2}, $S_{j}^{\alpha_1, \alpha_2, \lambda_1, \dots \lambda_r}$ is an $L^2$-bounded FIO, so
\begin{align*}
\Vert \tilde K^\alpha_j(x,x-y)\Vert_{L^2_x(\Rn)}
& \lesssim \!\!\sum_{\substack{\alpha_1 + \alpha_2=\alpha \\ \lambda_1 + \dots + \lambda_r = \alpha_2} }\!\! 2^{j(m+{n\delta}/2)}
\|S_{j}^{\alpha_1, \alpha_2, \lambda_1, \dots \lambda_r}(\tau_{y}\Psi_j^\vee)\|_{L^2(\Rn)} \\
& \lesssim 2^{j(m+{n\delta}/2)} \| \Psi_j\|_{L^2(\Rn)}
\lesssim 2^{j(   m+n/2+{n\delta}/2)},
\end{align*}
which proves \eqref{eq:kernelestimate1}.\\

Now, the estimate in \eqref{eq:kernelestimate1} yields that for any integer $M$, if one sums over $\abs\alpha\leq M$,
\nm{eq:newone}{
\norm{ (1+ \vert x-y\vert)^M\, K_{j}(x,y)}_{L^2_x( \Rn)}&\lesssim 2^{j(n/2+ m+{n\delta}/2)}.
}

We now observe that for $t\in [0,1],$ $x\in  \Rn\setminus 2B$ and $y\in B$, one has
\begin{equation}\label{eq:estimateybar}
    \vert x-\bar{y}\vert \lesssim \vert x-\bar{y}-t(y-\bar{y})\vert.
\end{equation}

Next we introduce
$$g(x):=\Big(1+ |x-\bar{y}|\Big)^{-M},$$
where $M > n/q$ and $1/q=1/p-1/2$. The H\"older and the Minkowski inequalities together with \eqref{eq:newone} and \eqref{eq:estimateybar} (with $t=1$) yield
\begin{align}\label{eq:keyestimate1}
 \|S_j\at \|_{L^p( \Rn\setminus 2B)}
& = \Big\| \int_{B} K_{j}(x,y)\,\at(y)\dd y \Big\| _{L^p_x(\Rn\setminus 2B)} \\
& \leq \Big\|  \frac{1}{g(x)}
\int_{B}K_{j}(x,y)\,\at(y)\dd y \Big\| _{L^2_x( \Rn\setminus 2B)}
\,\Vert g \Vert _{L^q( \Rn)}  \nonumber \\
&\lesssim
  \int_{B} \Big\|\frac{1}{g(x)}\,K_{j}(x,y)\,\at(y)\Big\|_{L^2_x( \Rn\setminus 2B)}  \dd y \nonumber \\
 &\lesssim   \int_{B} \vert \at(y) \vert \, \Big\| (1+ |x-{y}|)^{M} \,   K_{j}(x,y)\Big\| _{L^2_x( \Rn\setminus 2B)} \!\dd y \nonumber \\
&\lesssim r^{n-n/p}\,   2^{j(n/2+ m+{n\delta}/2)}\lesssim r^{n-n/p}\,2^{j(n-n/p)}, \nonumber
\end{align}
since $ m=-n(1/p-1/2)-n\delta/2$.\\

On the other hand, taking $N:=[n(1/p-1)]$ (note that $N > n/p-n-1$),
a Taylor expansion of the kernel at the point $y=\overline{y}$ yields that
 \m{
K_{j}(x,y)
& = \sum_{ |\beta| \leq N} \frac{(y-\bar y)^\beta}{\beta!}\,  \partial^\beta_y (K_{j}(x, y))_{|_{y=\bar y}} \\
& \qquad + (N+1) \sum_{ |\beta| = N+1}  \frac{(y-\bar y)^\beta}{\beta!}  \int_0^1 (1-t)^N \,\partial^\beta_y (K_{j}(x,y))_{|_{y=\bar y+t(y-\bar y)}} \dd t
}
and due to vanishing moments of the atom in Definition \ref{def:Hpatom}, $iii)$, we may express the operator as
\begin{align*}
S_j\at(x)   =(N+1)\!\!\! \sum_{ |\beta| = N+1} \int_{B}  \int_0 ^1 \frac{(y-\bar{y})^\beta}{\beta!}\, (1-t)^N\,
\partial^\beta_y (K_{j}(x,y))_{|_{y=\bar y+t(y-\bar y)}}
\at(y) \dd t\dd y.
\end{align*}

Noting that $\abs{ (y-\bar{y})^\beta }\lesssim r^{N+1}$ and applying the same procedure as above together with estimates \eqref{eq:kernelestimate1} and \eqref{eq:estimateybar}, we obtain
\begin{equation}\label{eq:keyestimate2}
\Vert S_j \at \Vert _{L^p(\Rn\setminus 2B)}
\lesssim r^{N+1-n/p+n}\,2^{j(N+1+ m+n/2+{n\delta}/2)}\lesssim r^{{N} +1+n-n/p }\,2^{j({N}+1+n-n/p)}.
\end{equation}

Now we split the proof in two different cases, namely when the radius $r$ of the support of the atom $\at$ is less than or greater or equal to one.\\

For $r\geq 1$, \eqref{eq:keyestimate1} yields that
\begin{equation*}\label{eq:largeballs}
\| T \at \|_{L^p(\Rn\setminus 2B)}^p
\lesssim \sum_{{j=1}}^\infty \| S_j \at \|_{L^p(\Rn\setminus 2B)}^p \lesssim \sum_{{j=1}}^\infty r^{np-n}\,2^{j(np-n)} \lesssim 1.
\end{equation*}

Assume now that $r < 1$.
Choose $\ell \in \Z_+$ such that
$2^{-\ell-1} \leq r < 2^{-\ell }$. Using the facts that $2^{-\ell}\approx r$, $N+1+n-n/p>0$, $n-n/p<0$, together with \eqref{eq:keyestimate1} and \eqref{eq:keyestimate2} we conclude that
\begin{align*}
\| T \at\|_{L^p(\Rn\setminus 2B)}^p
& \lesssim \sum_{{j=1}}^{\ell} \brkt{r^{N+1+n-n/p} \, 2^{j(N+1+n-n/p)}}^p
	+ \sum_{j=\ell +1}^{\infty} \brkt{r^{n-n/p} \, 2^{j(n-n/p)}}^p \\
& \lesssim  \brkt{r^{N+1+n-n/p} \, 2^{\ell (N+1+n-n/p)}}^p
	+ \brkt{r^{n-n/p} \, 2^{\ell(n-n/p)}}^p \\
& \approx  \brkt{r^{N+1+n-n/p} \, r^{-(N+1+n-n/p)}}^p
	+ \brkt{r^{n-n/p} \, r^{-(n-n/p)}}^p\\
& \approx  1.
\end{align*}
Putting this together with \eqref{eq:combining1}, yields the uniform boundedness of \eqref{eq:crucialintegral1}.\\

The proof of the adjoint case is identical, except for the fact that  \eqref{eq:kernelestimate1} becomes
\m{
\tilde K^\alpha_j(y,x-y):= \int_{\R^n} a_j(y,\xi)\,(x-y)^\alpha\, e^{-i\theta(y,\xi)+i(x-y)\cdot\xi}  \ddd\xi
}
and when applying $\beta$ derivatives in the $y$-variable the $y$-dependence in both arguments has to be taken into consideration.
\end{proof}

It is also evident that Theorem \ref{thm:LpResultexotic} yields the $L^p$-boundedness of pseudodifferential operators with exotic symbols and thereby completes the investigation in \cite{AH}.

\subsection{Classical amplitudes}
We proceed by proving a global $L^p$-boundedness result for classical FIOs with amplitudes in $a\in S^m_{\rho,\delta}(\Rn)$ with $0 < \rho \leq 1$, $0 \leq \delta< 1$.\\

Before doing that, we need the following lemma which provides $\mathscr{H}^q-L^2$ estimates for FIOs with amplitudes in general H\"ormander classes.

\begin{Lem}\label{lemma:HLS-lemma}
Let $m_1\leq 0$, $n\geq 1,$ $\rho\in [0,1]$, $\delta\in [0,1)$ and
\begin{equation*}
m:=m_1-n \max\brkt{0,\frac{\delta-\rho}2}.
\end{equation*}
Suppose that $a\in S^m_{\rho,\delta}(\Rn)$ and that $a(x,\xi)$ vanishes in a neighborhood of $\xi=0$. Also, let $\varphi$ be an \emph{SND} phase function in the class $\Phi^2$.
Then $T_a^\varphi$, defined in \eqref{eq:FIO}, satisfies
\nm{eq:L2toLq}{
\|T_a^\varphi f\|_{L^2(\Rn)}
\lesssim
\|f\|_{\mathscr{H}^{{2n}/{(n-2 m_1)}}(\Rn)}.}
Also for the adjoint operator one has
\nm{eq:L2toLqadj}{
\|(T_a^{\varphi})^* f\|_{L^2(\Rn)} \lesssim
\|f\|_{\mathscr{H}^{{2n}/{(n-2 m_1)}}(\Rn)}.}
\end{Lem}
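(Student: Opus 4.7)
The plan is to factor $T_a^\varphi$ through a Bessel potential so as to separate an $L^2$-bounded FIO from a Hardy--Sobolev embedding. Let $J^s:=(1-\Delta)^{s/2}$, whose symbol lies in $S^s_{1,0}(\Rn)$ and which acts by $\widehat{J^sf}(\xi)=\langle\xi\rangle^s\widehat f(\xi)$. Using the identity $I=J^{-m_1}J^{m_1}$, I would write
$$T_a^\varphi f = \brkt{T_a^\varphi J^{-m_1}}\brkt{J^{m_1}f}.$$
The operator $T_a^\varphi J^{-m_1}$ is itself an FIO with the same phase $\varphi$ and amplitude $a(x,\xi)\langle\xi\rangle^{-m_1}$, which by the Leibniz rule lies in
$$S^{m-m_1}_{\rho,\delta}(\Rn)=S^{-n\max(0,(\delta-\rho)/2)}_{\rho,\delta}(\Rn).$$
Proposition \ref{basicL2} therefore gives its $L^2$-boundedness, so \eqref{eq:L2toLq} reduces to the classical Hardy--Sobolev embedding
$$\|J^{m_1}f\|_{L^2(\Rn)}\lesssim \|f\|_{\mathscr{H}^q(\Rn)},\qquad \frac{1}{q}=\frac{1}{2}-\frac{m_1}{n},$$
equivalent to $\mathscr{H}^q(\Rn)\hookrightarrow H^{m_1}(\Rn)$ (Fefferman--Stein).

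For the adjoint estimate the factorisation has to place the Bessel potential on the opposite side of the FIO. Writing
$$(T_a^\varphi)^*=(J^{-m_1}T_a^\varphi)^*\circ J^{m_1},$$
I would invoke Theorem \ref{thm: main composition} with Fourier multiplier $\gamma(\xi)=\langle\xi\rangle^{-m_1}\in S^{-m_1}_{1,0}(\Rn)$, noting that the SND phase $\varphi\in\Phi^2$ verifies the hypotheses of that theorem by the remark following its statement. This identifies $J^{-m_1}T_a^\varphi$ as an FIO with phase $\varphi$ and amplitude in $S^{m-m_1}_{\rho,\delta}(\Rn)=S^{-n\max(0,(\delta-\rho)/2)}_{\rho,\delta}(\Rn)$, hence $L^2$-bounded by Proposition \ref{basicL2}. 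Passing to adjoints and applying the same Hardy--Sobolev embedding then yields \eqref{eq:L2toLqadj}.

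The substantive step is the adjoint case, in which Theorem \ref{thm: main composition} is used in an essential way to absorb the Bessel multiplier into the amplitude while remaining in the class $S^{m'}_{\rho,\delta}(\Rn)$; this is precisely where the restriction $\delta<1$ enters, since the composition calculus breaks down at the forbidden endpoint. For the direct estimate no composition theorem is needed because $J^{-m_1}$ acts on the Fourier side and simply multiplies $a(x,\xi)$ by $\langle\xi\rangle^{-m_1}$. The hypothesis that $a$ vanishes in a neighbourhood of $\xi=0$ removes any singularity from this multiplication and matches the support condition on $\Omega=\R^n\times\{|\xi|>1\}$ required by Theorem \ref{thm: main composition}.
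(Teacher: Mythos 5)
Your proposal is correct and follows essentially the same route as the paper: factor through the Bessel potential $(1-\Delta)^{-m_1/2}$, use Proposition \ref{basicL2} for the $L^2$-boundedness of $T_a^\varphi(1-\Delta)^{-m_1/2}$ (a mere multiplication of the amplitude) and Theorem \ref{thm: main composition} for $(1-\Delta)^{-m_1/2}T_a^\varphi$ in the adjoint case, then conclude with the Hardy--Sobolev embedding $\|(1-\Delta)^{m_1/2}f\|_{L^2(\Rn)}\lesssim\|f\|_{\mathscr{H}^{q}(\Rn)}$, $1/q=1/2-m_1/n$. The paper's proof uses exactly this factorisation and the duality identity $\Vert (T_a^\varphi)^*(1-\Delta)^{-m_1/2}\Vert_{L^2\to L^2}=\Vert (1-\Delta)^{-m_1/2}T_a^\varphi\Vert_{L^2\to L^2}$, which coincides with your adjoint step.
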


\begin{proof}
Since the operator $T_a^\varphi (1-\Delta)^{-m_1/2}$ is an FIO with the phase $\varphi$ and an amplitude in $S^{-n \max(0,(\delta-\rho)/2)}_{\rho,\delta}(\Rn)$ it is $L^2$-bounded by Proposition \ref{basicL2}. This $L^2$-boundedness together with the estimates for the Bessel potential operators reformulated in terms of embedding of Triebel-Lizorkin spaces (see \cite[Corollary 2.7]{Triebel4}) yield
\m{
\|T_a^\varphi f\|_{L^2(\Rn)}
& = \|T_a^\varphi (1-\Delta)^{-m_1 /2}(1-\Delta)^{m_1 /2}f\|_{L^2(\Rn)} \\
&\lesssim \|(1-\Delta)^{m_1 /2 }f\|_{L^2(\Rn)}
\lesssim \|f\|_{\mathscr{H}^{q}(\Rn)},}
with $1/q-1/2= -m_1/n,$ which proves \eqref{eq:L2toLq}. Here observe that the choice of the range of $m_1$ implies that $0< q\leq 2$.\\

Next we prove \eqref{eq:L2toLqadj}. By Theorem \ref{thm: main composition} the composition  $ (1-\Delta)^{-m_1 /2} T_a^\varphi$ is an FIO with the phase $\varphi$ and an amplitude in $S^{-n \max(0,(\delta-\rho)/2)}_{\rho,\delta}(\Rn)$, and therefore $L^2$-bounded. Finally, observing that $$\Vert (T_a^\varphi)^*(1-\Delta)^{-m_1 /2}\Vert_{L^2 \to L^2} = \Vert  ((1-\Delta)^{-m_1 /2}T_a^\varphi)\Vert_{L^2 \to L^2} ,$$ one can proceed as above.
\end{proof}

Now we are ready to state an prove our main $L^p$-estimate for FIOs with general classical H\"ormander-type amplitudes.

\begin{Th}\label{thm:LpResult}
Let {$n\geq1$}, $a\in S^m_{\rho,\delta}(\Rn)$, $\varphi$ be an \emph{SND} phase function in the class $\Phi^2$ and let $T_a^\varphi$ be given as in \emph{Definition \ref{def:FIO}}. For the case $0 < \rho \leq 1$, $0 \leq \delta< 1$, $\mu:=\min(\rho, 1/2)$ and
$$ m=
\set{\rho-n +\Big(\mu-\frac{1}{2}\Big) (n-1) }
\abs{\frac{1}{p}-\frac{1}{2}} - n\max\brkt{0,\frac{\delta-\rho}2},$$
the \emph{FIO} $T_a^\varphi$ is $L^p$-bounded for $1<p<\infty.$
\end{Th}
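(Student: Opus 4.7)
The plan is to mirror the argument used for the exotic case in the proof of Theorem \ref{thm:LpResultexotic}, but with two substantial upgrades: the single Littlewood-Paley decomposition is replaced by the Seeger-Sogge-Stein second dyadic decomposition $T_a^\varphi = \sum_j \sum_\nu T_j^\nu$ from Section \ref{SSS decomposition}, and the naive bound $\|T_a^\varphi \at\|_{L^2}\lesssim \|\at\|_{L^2}$ is replaced by the refined $\mathscr{H}^q \to L^2$ estimate of Lemma \ref{lemma:HLS-lemma}. The former accounts for the $(\mu-1/2)(n-1)|1/p-1/2|$ improvement in $m$ that is characteristic of FIOs with non-degenerate phase, while the latter absorbs the $-n\max(0,(\delta-\rho)/2)$ shift needed when $\delta > \rho$.

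After the reduction in Section \ref{RS globalisation} one may assume $\varphi(x,\xi) = \theta(x,\xi) + x\cdot\xi$ with $\theta \in \Phi^1$. Proposition \ref{basicL2} provides the $L^2$-boundedness, so by interpolation and duality it suffices to prove the uniform bound $\|T_a^\varphi \at\|_{L^p(\R^n)}\lesssim 1$ for every $\mathscr{H}^p$-atom $\at$ with support in $B = B(\bar y, r)$ and some $p \in (0,1)$, and the analogous bound for $(T_a^\varphi)^*$. The integral is then split using the Seeger-Sogge-Stein influence set $B^*$ of \eqref{eq:Bstar}. On $B^*$, H\"older's inequality gives
\[
\|T_a^\varphi \at\|_{L^p(B^*)} \leq |B^*|^{1/p-1/2}\,\|T_a^\varphi \at\|_{L^2(\R^n)},
\]
and Lemma \ref{lemma:HLS-lemma} applied with $m_1 := \{\rho - n + (\mu - 1/2)(n-1)\}|1/p - 1/2|$ (so that $m = m_1 - n\max(0,(\delta-\rho)/2)$) yields $\|T_a^\varphi \at\|_{L^2(\R^n)}\lesssim \|\at\|_{\mathscr{H}^q(\R^n)}$ with $1/q = 1/2 - m_1/n$. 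Combined with the volume bound $|B^*|\lesssim r^{\rho + (\mu - 1/2)(n-1)}$ from Lemma \ref{measure of Bstar}$(i)$ and the atomic $\mathscr{H}^q$-estimate, the exponents of $r$ cancel exactly: this is precisely the arithmetic that dictates the value of $m$ in the statement.

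On the complement $\R^n \setminus B^*$ the plan is to expand $T_a^\varphi = \sum_{j \geq 0}\sum_\nu T_j^\nu$ as in \eqref{Tj}, and to rewrite the phase of each piece as $\varphi(x,\xi) - y\cdot\xi = h_j^\nu(x,\xi) + \xi \cdot (\nabla_\xi\varphi(x,\xi_j^\nu) - y)$. The lower bound \eqref{eq:keypoint} of Lemma \ref{measure of Bstar}$(ii)$ supplies oscillation in the second summand, which is exploited via integration by parts roughly $2^{j\rho}$ times in the radial direction $\xi_1$ (using \eqref{eq:quadrseconddyadcond2}) and $2^{j\mu}$ times in $\xi'$ (using \eqref{eq:quadrseconddyadcond}), while Lemma \ref{lem:h} ensures that the derivatives of $h_j^\nu$ are harmless. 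Each derivative falling on $a$ loses $2^{j\delta}$, a loss compensated by the $-n\max(0,(\delta-\rho)/2)$ shift built into $m$. With $r \approx 2^{-k}$ one then sums the resulting pointwise kernel bound over the $O(2^{j(n-1)/2})$ cones $\nu$ and over $j \geq k$, using the vanishing moments of the atom up to order $N = [n(1/p-1)]$ to trade powers of $r$ for powers of $2^j$ in the regime $r < 1$. Globalisation to amplitudes without compact spatial support is achieved by Lemmas \ref{Lem:outside} and \ref{Lem:kernel RuzhSug}, which supply $\widetilde H^{-L}$ factors that are integrable by virtue of \eqref{integrable HL}. The adjoint bound follows by the same scheme, with Theorem \ref{thm: main composition} ensuring that compositions of Bessel potentials with $T_a^\varphi$ remain within the same FIO class.

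The main obstacle is the kernel estimate on $\R^n \setminus B^*$: one must simultaneously balance the loss $2^{j\delta}$ from $x$-derivatives of $a$, the loss $2^{j/2}$ per $\xi$-derivative of $\chi_j^\nu$, the cardinality $2^{j(n-1)/2}$ of the cone family, and the gains $2^{j\rho}$ and $2^{j\mu}$ per integration by parts coming from Lemma \ref{measure of Bstar}$(ii)$. The most delicate regime is $\rho < 1/2$ with $\delta \geq \rho$, where differentiation of $a$ dominates the $\xi$-gains and the shift $-n(\delta-\rho)/2$ must enter at the level of Proposition \ref{basicL2} and is inherited by the present bound through Lemma \ref{lemma:HLS-lemma}.
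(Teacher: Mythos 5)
Your overall architecture --- the reduction of Section \ref{RS globalisation}, the split into $B^*$ and its complement, Lemma \ref{lemma:HLS-lemma} on $B^*$ with exactly the right exponent arithmetic, the second dyadic decomposition on the complement, and the Ruzhansky--Sugimoto globalisation --- matches the paper, and your $B^*$ computation is correct. The genuine gap is in the treatment of $\Rn\setminus B^*$. You propose to integrate by parts against the weights $2^{j\rho}$, $2^{j\mu}$ and then sum a \emph{pointwise} kernel bound over the $O(2^{j(n-1)/2})$ cones. Taking absolute values inside the $\xi$-integral costs the full measure $\approx 2^{j(n+1)/2}$ of the support of $\chi_j^\nu\psi_j$, so this route gives at best $\int_{\Rn}\abs{\nabla_y K_j(x,y)}\dd x\lesssim 2^{j}\,2^{j(m+n-\rho-(n-1)\mu)}$ (and the analogous bound for $K_j$ itself), which forces $m\leq \rho-n+(n-1)\mu$. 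That is strictly below the order claimed in the theorem whenever $\rho<1$: at the endpoint $p=1$ the theorem requires $m_1=\frac{\rho-n}{2}+(\mu-\frac{1}{2})\frac{n-1}{2}$, and the deficit is $\frac{1-\rho}{2}$ for $\rho\geq\frac{1}{2}$ (larger for $\rho<\frac{1}{2}$); already for $\rho=\delta=\frac{1}{2}$ you would only recover the order $-\frac{n}{2}$ instead of $-\frac{2n-1}{4}$. The paper closes exactly this gap by \emph{not} estimating the kernels pointwise: it writes $\nabla_yK_j^\nu(x,y)$ as $2^{j+jm_1}\,(S_{j,y}^{\nu,N}g_j^\nu)(\nabla_\xi\varphi(x,\xi_j^\nu)-y)$ divided by the weights, and then uses Cauchy--Schwarz in $x$ over the (reduced) compact support together with the \emph{uniform $L^2$-boundedness} of the localized operators $S_{j,y}^{\nu,N}$ and $\|g_j^\nu\|^2_{L^2(\Rn)}\lesssim 2^{j((n-1)/2+1)}$; this pays only the square root of the $\xi$-support measure and is precisely what yields \eqref{eq:Pregoal} and \eqref{eq:Goal2} at the stated order. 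Some such $L^2$ input on each cone piece (a $TT^*$/Plancherel step) is indispensable; without it the summation over cones cannot reach the claimed $m$.

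Relatedly, your accounting of $\delta$ is off. In the $\xi$-integrations by parts no $x$-derivative of $a$ ever occurs, so nothing ``loses $2^{j\delta}$'': $\xi$-derivatives of $a\in S^m_{\rho,\delta}(\Rn)$ gain $2^{-j\rho}$, and the pointwise kernel bounds are blind to $\delta$. The shift $-n\max(0,(\delta-\rho)/2)$ is absorbed on $\Rn\setminus B^*$ through Proposition \ref{basicL2}: after the normalisation $2^{-j-jm_1}$, the symbols of the pieces $S_{j,y}^{\nu,N}$ lie in $S^{-n\max(0,(\delta-\rho)/2)}_{\rho,\delta}(\Rn)$ uniformly in $j$, $\nu$ and $y$, and it is their uniform $L^2$-bound where $\delta$ enters; Lemma \ref{lemma:HLS-lemma}, which you invoke for this purpose, only handles the $B^*$ piece. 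A smaller remark: the paper's endpoint is $\mathscr{H}^1\to L^1$, using only the mean-zero property of the atom together with the Lipschitz estimate \eqref{eq:Goal1}, whereas your $\mathscr{H}^p$-variant with $p<1$ and Taylor expansion is viable in principle but requires care: for small $\rho$ the exponent $q=2n/(n-2m_1)$ in Lemma \ref{lemma:HLS-lemma} can drop below $p$, so the atoms must be taken with more vanishing moments than $[n(1/p-1)]$ in order to be used there --- an issue that does not arise at $p=1$.
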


\begin{Rem}
We note that for $\rho\in [1/2, 1]$ and $\delta= 1-\rho$, the order $m$ in the theorem above is equal to $-(n-\rho)|1/p-1/2|$ which is sharp and is the same order of decay as in \cite{SSS} $($in the maximal rank case, i.e. the case when the rank of the Hessian $($in $\xi$$)$ of the phase is equal to $n-1$\emph{)}. Moreover if the amplitude $a$ is assumed to have compact spatial support then one can replace the \emph{SND} condition in the theorem above by the non-degeneracy condition of \emph{Definition \ref{nondegeneracy}}.
\end{Rem}

\begin{Rem}\label{Rem:rho0}
In \emph{Theorem \ref{thm:LpResult}} it is not possible to consider the case $\rho=0$ for several reasons. First, the definitions of the rectangles in \eqref{eq:rectangles} would not make sense. Second, the choice of $M$ in the proof of \eqref{eq:Goal2} below would not be possible. Third, the choice of $L> n/\rho$ in \eqref{Ruz-Sug ball estim} would be problematic.
\end{Rem}

\begin{Rem}
We observe that if $0\leq \rho\leq 1$ then $S^m_{\rho, \delta}(\Rn)\subset S^{m}_{0,\delta}(\Rn)$. This behoves us to compare the orders of decay i.e. the $m$'s  in \emph{Theorem \ref{thm:LpResultexotic}} and \emph{Theorem \ref{thm:LpResult}}, which we denote by $m_1$ and $m_2$ respectively. For the sake of discussion let us compare the $m$'s that are required for the $\mathscr{H}^1-L^1$ boundedness, and so assume that $p=1$. In the case of $\delta\geq\rho$ and $\rho<1/2$, then we have
$$m_1= - \frac{n}{2} - \frac{n\delta}{2},$$
and
$$m_2=\frac{\rho}{2} -\frac{n}{2} +\brkt{\rho-\frac{1}{2}}\,\frac{n-1}{2}- n\,\frac{\delta-\rho}{2}.$$ Here we see that $m_2 >m_1$ iff $(1-4\rho)n<1.$
Therefore if $1/4\leq \rho<1/2$ and $\delta\geq\rho$ then \emph{Theorem \ref{thm:LpResult}} is an improvement of \emph{Theorem \ref{thm:LpResultexotic}}. On the other hand, if $\delta < \rho< 1/2$ and $\rho+\delta \geq 1/2$ then $m_1$ is the same as above and
$$m_2=\frac{\rho}{2} -\frac{n}{2} +\brkt{\rho-\frac{1}{2}}\,\frac{n-1}{2},$$
and we see once again that $m_2>m_1.$ Therefore even in this case \emph{Theorem \ref{thm:LpResultexotic}} provides an improvement.
\end{Rem}

\begin{proof}[Proof of \emph{Theorem \ref{thm:LpResult}}]
For $n=1$, it is well known that FIOs are special cases of pseudodifferential operators and hence the result follows from the corresponding theory for those operators (see e.g. \cite{Stein}). Therefore, from now on we concentrate on the case $n\geq 2.$
 We will initially assume that $a(x,\xi)$ is supported in a fixed compact set in the $x$-variable. This will however be removed later on in the proof. Since the result has already been proven for the case when $p=2$ in Proposition \ref{basicL2}, the only thing that is left to prove is that $T_a^\varphi$ and its adjoint map $\mathscr{H}^1(\Rn)$ to $L^1(\Rn)$ continuously, when $a(x,\xi)\in S^m_{\rho, \delta}(\R^n)$ with
 $$m=\frac{\rho-n}{2} +\Big(\mu-\frac{1}{2}\Big)\frac{(n-1)}{2}
 - n\max\brkt{0,\frac{\delta-\rho}2}.$$
 Due to the atomic decomposition in Definition \ref{def:Hpatom} of a member of $\mathscr{H}^1(\Rn)$ we  need to show that
\nm{eq:crucialintegral2}{\int_{\Rn} |T_a^\varphi \at(x)|\dd x}
is uniformly bounded  for every $\mathscr{H}^1$-atom $\at$, where the atom is supported in the ball $B(x_0 ,r)$.\\

\textbf{Step 1 - Estimates of $\mathbf{\|T_a^\varphi \at\|_{L^1(\R^n)}}$ when $\mathbf{r\leq 1}$}

Recalling the set $B^*$ in \eqref{eq:Bstar}, we split \eqref{eq:crucialintegral2} into two pieces, namely
\m{
\|T_a^\varphi \at\|_{L^1(\Rn)}
= \|T_a^\varphi \at\|_{L^1(B^*)}
+\|T_a^\varphi \at\|_{L^1(\Rn \setminus B^*)}
=: \operatorname I+\operatorname {II}.}

Using the first part of Lemma \ref{measure of Bstar}, the Cauchy-Schwarz inequality, and Lemma \ref{lemma:HLS-lemma} we can deduce that $\operatorname I$ is uniformly bounded. Indeed take $\mathfrak b = |B|^{1-  1/q}\at$ with $q={2n}/{(n-2m_1)}$. Note that $\mathfrak b$ is then an $\mathscr{H}^{q}$-atom with $\Vert \mathfrak b\Vert_{\mathscr{H}^{q}(\Rn)}= 1$. Thus,
\begin{align*}
\operatorname I
& \lesssim
r^{\rho/2 + (\rho/2-1/4)(n-1)} \,
\|T_a^\varphi  \at\|_{L^2(\Rn)}  \lesssim
r^{\rho/2 + (\rho/2-1/4)(n-1)}\,
\| \at\|_{\mathscr{H}^{{2n}/{(n-2m_1)}}(\Rn)}\\
& = r^{\rho/2 + (\rho/2-1/4)(n-1)+n/q-n}\,
\| \mathfrak b\|_{\mathscr{H}^{{2n}/{(n-2m_1)}}(\Rn)}=
r^{\rho/2 + (\rho/2-1/4)(n-1)+n/q-n} =1,
\end{align*}
provided that
$$m_1 =
\frac{\rho-n}{2} + \brkt{\mu - \frac{1}{2}}\frac{n-1}{2}.$$

Now we have to deal with the last and most complicated part of the proof, that is the boundedness of $\operatorname {II}$. To do this, we use the decomposition \eqref{Tj}. Denoting the kernel of $T_j$ by $K_j$, $j \geq 1$ (recall Remark \ref{low freq rem}), we would first like to prove that
\begin{equation}\label{eq:Pregoal}
\int_{\R^n}\abs{\nabla_y K_j(x,y)}\dd x\lesssim 2^j,
\end{equation}
which immediately yields
\begin{equation}\label{eq:Goal1}
\int_{\Rn} |K_j(x,y) - K_j(x,z)|   \dd x
\lesssim 2^{j}\, |y-z|.
\end{equation}
To justify \eqref{eq:Pregoal}, take
\m{K_j^\nu(x,y) := \int_{\Rn} e^{i\varphi(x,\xi)-iy\cdot\xi }\,\chi_j^\nu(\xi)\,\psi_j(\xi)\,a(x,\xi)  \ddd \xi,}
set
$$h^{\nu}_j(x,\xi):= \varphi(x,\xi) - \xi\cdot\nabla_\xi \varphi(x,\xi_j^\nu)$$
and rewrite
\m{\nabla_y K_j^\nu(x,y) = \int_{\Rn} e^{i\xi\cdot \nabla_\xi \varphi(x,\xi_j^\nu)-iy\cdot\xi }\,b_j^\nu(x,\xi)  \ddd \xi,}
with
$$b_j^\nu (x,\xi) := -i\xi\, a(x,\xi)\,\chi_j^\nu(\xi)\,\psi_j(\xi)\,e^{ih^{\nu}_j(x,\xi)}.$$ Define the differential operator
\m{L:= \brkt{1-2^{2j \rho }\partial_{\xi_1}^2}\brkt{1-{2^{2j\mu}\Delta_{\xi'}}},}
where $\mu:=\min(\rho, 1/2)$. It is clear that for $N\geq 1$
\m{L^N e^{i\xi\cdot \nabla_\xi \varphi(x,\xi_j^\nu)-iy\cdot\xi } &= e^{i\xi\cdot \nabla_\xi \varphi(x,\xi_j^\nu)-iy\cdot\xi }\brkt{1+2^{2j\rho}\abs{( \nabla_\xi \varphi(x,\xi_j^\nu)-y)_1}^2}^N \\ &\qquad\times \brkt{1+2^{2j\mu}\abs{( \nabla_\xi \varphi(x,\xi_j^\nu)-y)'}^2}^N.}
We investigate how differentiation in different directions affects $b_j^\nu$. Using the fact that $a\in S_{\rho,\delta}^m(\Rn)$, Lemmas \ref{lem:chijnu} and \ref{lem:h}, we deduce that
\begin{align*}
& \abs{2^{2j \rho N_1} \partial_{\xi_1}^{2N_1} 2^{2j \mu N_2} \partial_{\xi_k}^{2N_2}\Big (a(x,\xi)\,\chi_j^\nu(\xi)\,\xi\,\psi_j(\xi)\, e^{ih^{\nu}_j(x,\xi)}\Big )} \\
& \leq \sum _{\substack{ \alpha_1+\dots +\alpha_4=2N_1 \\
\vert \beta_1 +\dots +\beta_4\vert=2N_2}}
\abs{2^{2j \rho N_1}\,2^{2j \mu N_2} \,\partial_{\xi_1}^{\alpha_1} \partial_{\xi_k}^{\beta_1}a(x,\xi)\,\partial_{\xi_1}^{\alpha_2}\partial_{\xi_k}^{\beta_2}\chi_j^\nu(\xi)\,\partial_{\xi_1}^{\alpha_3}\partial_{\xi_k}^{\beta_3}(\xi\psi_j(\xi))\,\partial_{\xi_1}^{\alpha_4}\partial_{\xi_k}^{\beta_4}e^{ih^{\nu}_j(x,\xi)}} \\
&   \lesssim \sum _{\substack{ \alpha_1+\dots +\alpha_4=2N_1 \\
\vert \beta_1 +\dots +\beta_4\vert=2N_2}}
2^{2j \rho N_1}\,2^{2j \mu N_2}\, 2^{j(m -\alpha_1\rho-\abs{\beta_1}\rho)}\,2^{-j\alpha_2-j\abs{\beta_2}/2}\,2^{j-j\alpha_3-j\abs{\beta_3}}\,2^{-j\alpha_4-j\abs{\beta_4}/2}\\
&  \lesssim \sum _{\substack{ \alpha_1+\dots +\alpha_4=2N_1 \\
\vert \beta_1 +\dots +\beta_4\vert=2N_2}}
2^{2j \rho N_1}\,2^{2j \mu N_2}\, 2^{j(m -\alpha_1\rho-\abs{\beta_1}\mu)}\,2^{-j\alpha_2\rho -j\abs{\beta_2}\mu}\,2^{j-j\alpha_3\rho -j\abs{\beta_3}\mu}\,2^{-j\alpha_4\rho -j\abs{\beta_4}\mu} \\
&  =
2^j\,2^{2j \rho N_1}\,2^{2j \mu N_2}\, 2^{j(m -2N_1\rho-{2N_2}\mu)} = 2^{j+jm}.
\end{align*}
This proves that
\nm{eq:modifiedsymbolestimate}{\abs{L^N b_j^\nu(x,\xi)}
\lesssim 2^{j(m+1)}.}

Now using integration by parts
\m{\nabla_y K_j^\nu(x,y)  = \frac {
\displaystyle \int_{\Gamma_j^\nu}e^{i\xi\cdot \nabla_\xi \varphi(x,\xi_j^\nu)-iy\cdot\xi }L^N b_j^\nu(x,\xi)\ddd \xi}{\brkt{1+2^{2j\rho }\abs{( \nabla_\xi \varphi(x,\xi_j^\nu)-y)_1}^2}^N\brkt{1+2^{2j\mu}\abs{( \nabla_\xi \varphi(x,\xi_j^\nu)-y)'}^2}^N},}
where $\Gamma^{\nu}_j$ defined as in \eqref{eq:gammajnu} is the support of $b_j^\nu(x,\xi)$.
Let $\widehat{g}_j^\nu$ be a function that is constantly equal to one on the $\xi$-support of $b_j^\nu$ and set $\mathbf t(x):= \nabla_\xi \varphi(x,\xi_j^\nu)$. Define
\m{
S_{j,y}^{\nu,N}  {g}_j^\nu (x) := 2^{-j-j m_1}\int_{\Gamma_j^\nu} e^{ix\cdot\xi  }\,\set{L^N b_j^\nu\brkt{\mathbf t^{-1}(x+y),\xi}}\,\widehat{g}_j^\nu(\xi)\ddd\xi.
}
Then because of \eqref{eq:modifiedsymbolestimate}, the choice of $m_1$, and that $\mathbf t$ is a diffeomorphism, $S_{j,y}^{\nu,N}$ is a $\Psi$DO of order
$- n\max(0,(\delta-\rho)/2)$ and hence {$L^2$-bounded}, by Proposition \ref{basicL2}, uniformly in $y$ and $j$. Observe that $\nabla_yK_j^\nu(x,y)$ can be rewritten as
\m{
\nabla_y K_j^\nu(x,y) & = \frac {2^{j+jm_1}\,(S_{j,y}^{\nu,N}{g}_j^\nu)(\nabla_\xi \varphi(x,\xi_j^\nu)-y)}{\brkt{1+2^{2j\rho }\abs{( \nabla_\xi \varphi(x,\xi_j^\nu)-y)_1}^2}^N\brkt{1+2^{2j\mu}\abs{( \nabla_\xi \varphi(x,\xi_j^\nu)-y)'}^2}^N}.
}

Now using the compact $x$-support, Cauchy-Schwarz inequality and that $\mathbf t(x)$ is a diffeomorphism, we have
\m{
&\brkt{ \int_{\R^n}\abs{\nabla_y K_j^\nu(x,y)}\dd x}^2   \\
&\qquad \lesssim \int_{\R^n}\frac{2^{2j+2jm_1}\,
\|S_{j,y}^{\nu,N}{g}_j^\nu\|_{L^2(\Rn)}^2}{\brkt{1+2^{2j\rho}\abs{( \nabla_\xi \varphi(x,\xi_j^\nu)-y)_1}^2}^{2N}\brkt{1+2^{2j\mu}\abs{( \nabla_\xi \varphi(x,\xi_j^\nu)-y)'}^2}^{2N}} \dd x   \\
& \qquad \lesssim  2^{2j+2jm_1-j(\rho+(n-1)\mu)}\,
\|{g}_j^\nu\|_{L^2(\Rn)}^2
\lesssim 2^{ 2j+2jm_1-j(\rho+(n-1)\mu) +j( (n-1)/2+1)}
\lesssim 2^{(3-n)j} .
}

Therefore, summing in $\nu$ and observing that since there are roughly $2^{j(n-1)/2}$ terms involved, we obtain
\m{\int_{\R^n}\abs{\nabla_y K_j(x,y)}\dd x\leq \sum_{\nu} 2^{j(3-n)/2}\lesssim
2^j,}
which is \eqref{eq:Pregoal}.\\

Our next goal is to show that
\begin{equation}\label{eq:Goal2}
\int_{\Rn \setminus B^*} |K_j(x,y)|   \dd x
\lesssim (2^{j} r)^{-1}, \qquad y \in B, \quad r \geq 2^{-j}.
\end{equation}

Indeed, a similar calculation as in the case of $\nabla_y K_j(x,y)$ and estimate \eqref{eq:keypoint} yield that, for any $M\geq 0$ one has
\begin{align*}
& \brkt{\int_{\Rn \setminus B^*}  \abs{K_j^\nu(x,y)}\dd x }^2   \\
& \qquad  \qquad \lesssim \int_{\Rn \setminus B^*} \frac{2^{2jm_1}\, \|S_{j,y}^{\nu,N}
{g}_j^\nu\|_{L^2(\Rn)}^2}{\brkt{1+2^{2j\rho}\abs{( \nabla \varphi(x,\xi_j^\nu)-y)_1}^2}^{2N+M/2-M/2}}\\
&\qquad \qquad \qquad \times \frac{1}{\brkt{1+2^{2j\mu}\abs{( \nabla \varphi(x,\xi_j^\nu)-y)'}^2}^{2N+M/2-M/2}} \dd x   \\
& \qquad \qquad \lesssim \int_{\Rn \setminus B^*} \frac{ 2^{2jm_1}\, \|S_{j,y}^{\nu,N}
{g}_j^\nu\|_{L^2(\Rn)}^2 }{\brkt{2^{j\rho}\abs{( \nabla \varphi(x,\xi_j^\nu)-y)_1}+2^{j\mu}\abs{( \nabla \varphi(x,\xi_j^\nu)-y)'}}^{M}} \\
& \qquad \qquad \qquad \times  \frac{1}{\brkt{1+2^{2j\rho}\abs{( \nabla \varphi(x,\xi_j^\nu)-y)_1}^2}^{2N-M/2}}\\
&\qquad\qquad \qquad \times \frac{1}{\brkt{1+2^{2j\mu}\abs{( \nabla \varphi(x,\xi_j^\nu)-y)'}^2}^{2N-M/2}} \dd x \\
& \qquad \qquad \lesssim \int_{\Rn \setminus B^*}   \frac{2^{-M(j-k)\rho/2}\,
2^{2jm_1}\,
\|S_{j,y}^{\nu,N}
{g}_j^\nu
\|_{L^2(\Rn)}^2}{\brkt{1+2^{2j\rho}\abs{( \nabla \varphi(x,\xi_j^\nu)-y)_1}^2}^{2N-M/2}}\\
&\qquad \qquad \qquad \times \frac{1}{\brkt{1+2^{2j\mu}\abs{( \nabla \varphi(x,\xi_j^\nu)-y)'}^2}^{2N-M/2}} \dd x \\
&\qquad \qquad \lesssim
2^{-M(j-k)\rho/2} \,
2^{-j(\rho+(n-1)\mu)}\,
2^{2jm_1}\,
\|
{g}_j^\nu
\|_{L^2(\Rn)}^2 \\
& \qquad \qquad \lesssim
2^{-M(j-k)\rho/2} \,
2^{ -j(\rho+(n-1)\mu)}\,
2^{2jm_1}\,
2^{j((n-1)/2+1) } \\
& \qquad \qquad = 2^{-M(j-k)\rho/2} \,2^{-j(n-1)}.
\end{align*}
Hence,
\begin{align*}
\int_{\Rn \setminus B^*}  \abs{K_j(x,y)}\dd x
& \leq \sum_{\nu} 2^{-j(n-1)/2}\,2^{-M(j-k)\rho/4}  \leq 2^{-M(j-k)\rho/4}
 \approx (2^j r)^{-M\rho/4}
\end{align*}
and taking $M:=4/\rho$ yields \eqref{eq:Goal2}.\\

Finally we write
\begin{equation*}
T_a^\varphi \at(x)= \sum_{2^j > r^{-1}} T_j \at (x)+ \sum_{2^j < r^{-1}} T_j \at(x),
\end{equation*}
where as before, $r$ is the radius of the support of the atom $\at$. Taking the $L^1$-norm, then property (\emph{iii}) of Definition \ref{def:Hpatom}, Minkowski's inequality, \eqref{eq:Goal1} and \eqref{eq:Goal2} yield that
\begin{align*}
\operatorname {II}
&=
\|T_a^\varphi \at\|_{L^1(\Rn\setminus B^*)}
\leq  \sum_{2^j <r^{-1}}
\|T_j \at\|_{L^1(\Rn \setminus B^*)}
+\sum_{2^j >r^{-1}}
\|T_j \at\|_{L^1(\Rn \setminus B^*)}\\
&\lesssim \sum_{2^j <r^{-1}} \int_B
\|K_j(x,y)-K_j(x,\bar y)\|_{L^1_x(\Rn)}\,\abs{\at(y)}\dd y \\
&\qquad +\sum_{2^j >r^{-1}} \int_B
\|K_j(x,y)\|_{L^1_x(\Rn)}\,\abs{\at(y)}\dd y \\
&\lesssim \sum_{2^j <r^{-1}} \int_B 2^jr\, r^{-n}\dd y +\sum_{2^j >r^{-1}} \int_B(2^j r)^{-1}\,r^{-n} \dd y \\
&= \sum_{2^j <r^{-1}}  2^jr+\sum_{2^j >r^{-1}}(2^j r)^{-1}
\lesssim 1.
\end{align*}
The corresponding proof of the  $\mathscr{H}^{1}-L^1$  boundedness of the adjoint $(T_a^\varphi)^*$ is similar to the one above with few modifications. First, \eqref{eq:L2toLq} has to be replaced by \eqref{eq:L2toLqadj}. Second, the $x$ and $y$ dependencies of the kernel are reversed. This means the following replacements:
\m{
\nabla_\xi\varphi(x,\xi_j^\nu) \longrightarrow x,\\
y \longrightarrow \nabla_\xi\varphi(y,\xi_j^\nu),\\
\bar y \longrightarrow \nabla_\xi\varphi(\bar y,\xi_j^\nu).
}
Otherwise the proof remains the same.
 \\

\textbf{Step 2 - Estimates of $\mathbf{\|T_a^\varphi \at\|_{L^1(\R^n)}}$ when $\mathbf{r\geq 1}$}\\
{Now we turn our attention to} atoms with supports in balls of radii $r\geq 1$. In this case, using the compact support of the amplitude $a$ and the $L^2$-boundedness of $T^{\varphi}_a$ (Proposition \ref{basicL2}) we have
\nm{eq:largeball}{
\|T_a^\varphi \at\|_{L^1(\Rn)}
\lesssim  \|T_a^\varphi \at\|_{L^2(\Rn)} \lesssim \|\at\|_{L^2(\Rn)}
\lesssim r^{-n/2}
\lesssim 1.}

To prove the boundedness of the adjoint $(T_a^\varphi)^*$, we split the $L^1$-norm into two pieces, namely
\begin{equation}\label{eq: splitting the FIO on balls3}
  \int_{\mathbb{R}^{n}}|(T_a^\varphi)^* \at(x)|\dd x=\int_{B'}|(T_a^\varphi)^* \at(x)|\dd x+\int_{\Rn \setminus B'}|(T_a^\varphi)^*\at (x)|\dd x,
\end{equation}
where $B'$ is the ball centered at the origin with radius $2K$ and
$$K:= \sup_{(y,\xi)\in \supp a\,\cap\, (\R^n \times \mathbb{S}^{n-1})}|\nabla_\xi \varphi(y,\xi)|.$$
We treat the first term of \eqref{eq: splitting the FIO on balls3} as in \eqref{eq:largeball}. For the second term we observe that the kernel of
$(T_a^\varphi)^*$ satisfies
\begin{equation}\label{based on nonstationary}
\abs{\int_{\R^n} e^{ix\cdot\xi -i\varphi(y,\xi)} \,a(y,\xi)\ddd \xi } \lesssim \frac 1{\abs x^{N}},
\end{equation}
for $|x|>2K$. This follows from the fact that, on the support of $a(y,\xi),$ the modulus of the gradient of the phase of the oscillatory integral above satisfies
$$|x-\nabla_\xi\varphi(y,\xi)|\geq |x|-K \geq |x|/2.$$
Now if
$$\psi_0(\xi)+\sum_{j=1}^\infty \psi (2^{-j}\xi)=1$$
is a Littlewood-Paley partition of unity with $\supp \psi$ inside a fixed annulus (see Definition \ref{def:LP}), then using Remark \ref{low freq rem} we have
\begin{equation}\label{eq:adjointlarger}
\begin{split}
\Big|\int_{\R^n} e^{ix\cdot\xi -i\varphi(y,\xi)}\, a(y,\xi)\ddd \xi \Big|
& \lesssim
\sum_{j=1}^{\infty}
\Big|\int_{\R^n} e^{ix\cdot\xi -i\varphi(y,\xi)}\, \psi(2^{-j}\xi)\, a(y,\xi)\ddd \xi \Big|\\
& = \sum_{j=1}^{\infty}  2^{jn}
\Big|\int_{\R^n} e^{i\lambda \Phi (x, y, \xi)}\,b(y,\xi)\ddd \xi \Big|,
\end{split}
\end{equation}

with $\lambda:= 2^j |x|$,
$$\Phi(x,y,\xi)
:=\frac{x\cdot\xi -\varphi(y,\xi)}{|x|},$$
and
$$b(y,\xi):=\psi (\xi)\, a(y,2^j\xi),$$ with compact support in $y$ and annulus-support in $\xi$. Now since for all multi-indices $\alpha$, $|\partial^{\alpha}_\xi b(y,\xi)|\lesssim 2^{jm}$ and since for $(y,\xi) \in \supp b(y,\xi)$ one has that $|\nabla_\xi\Phi(x,y,\xi)|\gtrsim 1$, the non-stationary phase estimate of Lemma \ref{lem:non-stationary} could be used to deduce that
$$\abs{\int_{\R^n} e^{i\lambda \Phi (x, y, \xi)}\,b(y,\xi)\ddd \xi}\lesssim 2^{jm}\, (2^j |x|)^{-N},$$
for any $N>0$. Thus using this in \eqref{eq:adjointlarger} and summing in $j$, \eqref{based on nonstationary} follows. Hence
\m{
\int_{\Rn \setminus B'}|(T_a^\varphi)^*\,\at(x)|\dd x \lesssim \int_{\Rn \setminus B'}\frac 1{\abs x^{N}}\brkt{\int_B \abs{\at(y)}\dd y} \dd x \lesssim 1.
}\hspace*{1cm}\\

\textbf{Step 3 - Globalisation of Steps 1 \& 2 }

Now we globalise the result that we have obtained so far for both $T_a^\varphi$ and $(T_a^\varphi)^* $ at the same time. Whenever we write $T$ we refer to both $T_a^\varphi$ and $(T_a^\varphi)^* $.\\

To prove that
$$ \int_{\mathbb{R}^{n}}|T\at(x)|\dd x
\lesssim 1$$
when there is no requirement on the support of the amplitude, we need to use a different strategy. First we observe that a global norm estimate for $T\at$ with $\at$ supported in a ball with an arbitrary centre, would follow from a  norm-estimate that is uniform in $s$ for $\tau_s^* T\tau_s \at$, with an atom $\at$ whose support is inside a ball centred at the origin. Note that here $\tau_s$ is the operator of translation by $s\in\R^n$. This is because by translation invariance of the $L^1$-norm one has that
$$\Vert T\at\Vert_{L^1(\R^n)}= \Vert \tau_s^* T\tau_s \tau_{-s} \at\Vert_{L^1(\R^n)}.$$  Thus our goal is to establish that
$$\Vert \tau_s^* T\tau_s \at\Vert_{L^1(\R^n)}\lesssim 1,$$
where the estimate is uniform in $s$ and $\at$ has its support in a ball centred at the origin.\\

At this point we once again use the conditions on the phase function to reduce our analysis to the case of operators with $\varphi$ of the form
$\theta (x,\xi)+x\cdot \xi$ or $-\theta (y,\xi)-y\cdot \xi$
with $\theta \in \Phi^1$, which can be done by the discussions of Section \ref{RS globalisation}. Now let $r\geq1$, $ L>n/\rho$ and $s\in\R^n$ and
suppose $\at$ is an $\mathscr{H}^1$-atom supported in a ball $B$, centred at the origin, with radius $r$. We use the the notions that were introduced in connection to the globalisation procedure in Section \ref{RS globalisation} and split the $L^1$-norm of $\tau_s^* T\tau_s \at$ into following two pieces:
\[
\left\| \tau_s^* T\tau_s \at \right\|_{L^1(\R^n)}
=
\left\|\tau_s^* T\tau_s \at\right\|_{L^1(\widetilde{\Delta}_{2r})}
+\left\|\tau_s^* T\tau_s \at\right\|_{L^1(\R^n \setminus \widetilde{\Delta}_{2r})}.
\]
First let us show that
\[
\left\|\tau_s^* T\tau_s \at\right\|_{L^1(\widetilde{\Delta}_{2r})}
\leq  C(n,M_{L},N_{L+1}).
\]

By Lemma \ref{Lem:outside}, for $x\in \widetilde{\Delta}_{2r}$ and $|y|\leq r$, we have
$$\widetilde{H}(x)\leq2 H(x,y,x-y)$$
and $(x,y,x-y)\in\Delta_r$.
Letting $K(x, y,x-y)$ denote the integral kernel of operator $T$, this fact and Lemma \ref{Lem:kernel RuzhSug} yield for any atom $\at$ supported in $B(0,r)$ that
\begin{align}\label{Ruz-Sug ball estim}
| T \at(x)|
&\leq 2^{L}\widetilde{H}(x)^{-L}\int_{|y|\leq r}\abs{H(x,y, x-y)^{L}\,K(x,y,x-y)\, \at(y)} \dd y
\\\nonumber
&\leq 2^{L}\widetilde{H}(x)^{-L}\,
\| H^{L} K\|_{L^\infty( \Delta_r)}\,
\Vert \at \Vert_{L^1(\R^n)}\\\nonumber
&\leq C(n,L,M_L ,N_{L+1})\,  \widetilde{H}(x)^{-L}\,,
\end{align}
since $ \Vert \at\Vert_{L^1(\R^n)} \leq 1.$
Therefore, if $r\geq 1$, choosing $ L>  n/\rho$, Lemma \ref{Lem:kernel RuzhSug} and the monotonicity
of $\Delta_r$ yield
\begin{align*}
\left\|T \at\right\|_{L^1(\widetilde{\Delta}_{2r})}
&\lesssim
\|\widetilde{H}(x)^{-L}\|_{L^1(\widetilde{\Delta}_{2r})}
\leq C(n, M_L, N_{L+1}).
\end{align*}
Observe that the phase function and the amplitude of $\tau_s^* T\tau_s$ are of the form  {$ \theta (x+ \nolinebreak  s, \xi)+(x-y)\cdot\xi$ and  $\sigma (x+s, \xi)$} respectively when $T= T_a^\varphi$ (a similar property is also true for $(T_a^\varphi)^*$). Therefore the conjugation of $T$ by $\tau_s$ renders the constants $M_L$ and $N_{L+1}$ unchanged and therefore the estimate above also yields the very same one for $\tau_s^* T\tau_s$. This means that
$$\left\|\tau_s^* T\tau_s \at\right\|_{L^1(\widetilde{\Delta}_{2r})}
\lesssim 1.$$

On the other hand for
$\left\|\tau_s^* T\tau_s \at\right\|_{L^1
(\R^n \setminus \widetilde{\Delta}_{2r})},$
Lemma \ref{Lem:outside}, H\"older's inequality and the properties of the atom $\at$ yield that
\begin{align*}
\left\|\tau_s^* T\tau_s \at\right\|_{L^1(\R^n \setminus \widetilde{\Delta}_{2r})}
&\leq
|\R^n \setminus \widetilde{\Delta}_{2r}|^{1/2}\,
\left\|\tau_s^* T\tau_s \at \right\|_{L^2(\R^n)}
\\
&\lesssim
r^{n/2}\,\|\at\|_{L^2(\R^n)}
\lesssim r^{n/2}\, r^{-n/2 } = 1.
\end{align*}
Now if the atom is supported in a ball of radius $r\leq 1$ then clearly $\supp \at\subset B(0,1).$ Now write $\R^n = \widetilde{\Delta}_2 \cup (\R^n \setminus \widetilde{\Delta}_2)$ and observe that we can now use Lemma \ref{Lem:kernel RuzhSug} with $r=1$ to conclude that
\begin{equation*}
|T\at(x)| \lesssim\widetilde{H}(x)^{-L},
\end{equation*}
which in turn yields that
$$\Vert \tau_s^* T\tau_s \at\Vert_{L^1 (\widetilde{\Delta}_2)}\lesssim 1.$$
Using now the first part of Lemma \ref{Lem:outside} we see that $\R^n \setminus \widetilde{\Delta}_2 \subset B(0, 2+N_{K})$ which together with the local boundedness result that we established previously implies that
\begin{equation*}
\Vert \tau_s^* T\tau_s \at\Vert_{L^1(\R^n \setminus\widetilde{\Delta}_2)}
\lesssim \Vert \tau_s^* T\tau_s \at\Vert_{L^1 (B(0, 2+N_{K}))}\lesssim \Vert \at\Vert_{\mathscr{H}^1 (\R^n)}\lesssim 1.
\end{equation*}
Now that we have boundedness from $\mathscr{H}^1(\Rn)$ to $L^1(\Rn)$ for both $T_a^\varphi$ itself and its adjoint as well as $L^2$-boundedness we can use a standard Riesz-Thorin interpolation argument to conclude that $T_a^\varphi$ is bounded from $L^p(\R^n)$ to itself.
\end{proof}

\subsection{Forbidden amplitudes}

The case of operators with amplitudes in $S^m_{\rho, 1}(\Rn)$ with $0\leq \rho \leq 1$ is rather special since the FIOs in question are generically not $L^2$-bounded. However Proposition \ref{basicL2} yields that if $m< n(\rho-1)/2$ then the associated FIO is indeed $L^2$-bounded, and this result is sharp. Here, only for the sake of completeness of exposition we state the result proven in \cite{DS} regarding the $L^p$-boundedness of FIOs with forbidden amplitudes.

\begin{Th}\label{thm:LpResultforbidden}
Let $n\geq1$, $a\in S^m_{\rho,1}(\Rn)$, $\varphi$ be an \emph{SND} phase function in the class $\Phi^2$ and let $T_a^\varphi$ be given as in \emph{Definition \ref{def:FIO}}. For  $0 \leq \rho\leq  1$ and
$$m<
n(\rho-1)\max \Big(\frac{1}{p}, \frac{1}{2}\Big)+ (n-1)
\Big|\frac{1}{p}-\frac{1}{2}\Big|$$
the \emph{FIO} $T_a^\varphi$ is $L^p$-bounded for $1\leq p\leq \infty.$
\end{Th}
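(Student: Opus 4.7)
The plan is to establish the $L^p$-bound by Riesz-Thorin interpolation between the critical $L^2$-estimate from Proposition \ref{basicL2} and an endpoint $L^\infty\to L^\infty$ bound, the latter obtained via a Bony-Fefferman-type splitting of the amplitude that separates a classical-like part from the genuinely forbidden contribution.

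First I would apply the reduction of Section \ref{RS globalisation}, so that the FIO takes the form
$$Tf(x)=\int_{\R^n} e^{i\theta(x,\xi)+ix\cdot\xi}\,\sigma(x,\xi)\,\widehat f(\xi)\ddd\xi$$
with $\theta\in\Phi^1$, $\theta(x,\xi)+x\cdot\xi$ strongly non-degenerate, and $\sigma\in S^m_{\rho,1}(\R^n)$. Proposition \ref{basicL2} immediately delivers $L^2$-boundedness when $m<n(\rho-1)/2$, which is the $p=2$ case of the theorem (both summands in the bound on $m$ coincide there). For the $L^\infty$ endpoint I would split $\sigma=\sigma^{\sharp}+\sigma^{\flat}$ by a spatial Fourier localisation: $\widehat{\sigma^{\sharp}}(\eta,\xi)$ is supported in $\{|\eta|\leq \varepsilon\langle\xi\rangle\}$ and $\widehat{\sigma^{\flat}}$ in the complement, for a small $\varepsilon>0$. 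On the effective support of $\sigma^{\sharp}$ each $\partial_x^{\beta}$ produces at most a factor $(\varepsilon\langle\xi\rangle)^{|\beta|}$, and Bony-Fefferman-type arguments show that the quantisation of $\sigma^{\sharp}$ behaves for purposes of $L^\infty$-boundedness like an amplitude in $S^m_{\rho,\delta^{\ast}}(\R^n)$ for some $\delta^{\ast}<1$, so the kernel decomposition of Section \ref{SSS decomposition} and the estimates from the proof of Theorem \ref{thm:LpResult} apply. For the ``bad'' piece $\sigma^{\flat}$, where $|\eta|\gtrsim\varepsilon\langle\xi\rangle$, the kernel
$$K^{\flat}(x,y)=\iint_{\R^n\times\R^n} e^{i\theta(x,\xi)+i(x-y)\cdot\xi+ix\cdot\eta}\,\widehat{\sigma^{\flat}}(\eta,\xi)\ddd\eta\ddd\xi$$
acquires rapid decay in $|x-y|$ through Lemma \ref{lem:non-stationary}, together with additional $|\eta|^{-N}$-decay obtained from $\xi$-integrations by parts, yielding uniform-in-$x$ integrability of $K^{\flat}(x,\cdot)$ in $L^1_y$. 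Adding the two contributions gives the $L^\infty$ endpoint; the $L^1$-endpoint follows by duality since the adjoint is again an FIO of the same class, and Riesz-Thorin covers $1\leq p\leq\infty$.

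The principal obstacle is tracking the $\varepsilon$-dependence in the splitting: the smaller $\varepsilon$ is, the closer $\sigma^{\sharp}$ comes to a non-forbidden amplitude and the more cleanly Theorem \ref{thm:LpResult} applies, but the worse the constants for $\sigma^{\flat}$ become. The optimal $\varepsilon$ must be selected to balance these trade-offs so as to land precisely on the stated threshold, while the intrinsic $L^2$-loss $n(\rho-1)/2$ from Proposition \ref{basicL2} has to be carried accurately through the interpolation. A further subtlety is that no integration by parts in $x$ is affordable at any stage, so all kernel estimates must proceed exclusively by $\xi$-integrations by parts, exactly in the spirit of the proof of Theorem \ref{thm:LpResultexotic}; this is precisely what forces the form of the threshold $m<n(\rho-1)\max(1/p,1/2)+(n-1)|1/p-1/2|$ obtained at the end.
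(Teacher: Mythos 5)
Your plan breaks down at its two load-bearing steps. First, the $L^1$ endpoint cannot be obtained by duality: the adjoint of an FIO with amplitude in the forbidden class $S^m_{\rho,1}(\Rn)$ is in general \emph{not} again an FIO with amplitude in that class -- the symbolic calculus collapses precisely at $\delta=1$ (this is why Theorem \ref{thm: main composition} excludes $\delta=1$ and Proposition \ref{basicL2} penalises it), and the asymmetry of the stated threshold in $1/p$ (the factor $\max(1/p,1/2)$ multiplying $n(\rho-1)$) encodes exactly this failure. If duality from your $L^\infty$ bound were legitimate, you would obtain for $1\leq p\leq 2$ the condition $m<n(\rho-1)/2+(n-1)|1/p-1/2|$, which is strictly weaker than the stated one (since $\rho\leq 1$), i.e.\ you would be ``proving'' more than the sharp result of \cite[Theorem 2.17]{DS} -- a clear warning sign. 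Second, the splitting $\sigma=\sigma^{\sharp}+\sigma^{\flat}$ does not do what you claim: truncating the $x$-spectrum to $|\eta|\leq\varepsilon\langle\xi\rangle$ only multiplies the $S^m_{\rho,1}$ seminorms by factors $\varepsilon^{|\beta|}$, while $\partial_x^{\beta}\sigma^{\sharp}$ still grows like $\langle\xi\rangle^{|\beta|}$; so $\sigma^{\sharp}$ remains a forbidden amplitude for every $\varepsilon>0$, and Theorem \ref{thm:LpResult} (which requires $\delta<1$, and whose proof relies on Proposition \ref{basicL2} and Theorem \ref{thm: main composition} with $\delta<1$) simply cannot be invoked for it. What the paradifferential truncation genuinely buys is spectral localisation of $T_{\sigma^{\sharp}}f$ away from the twisted diagonal, which is what Section \ref{Sobolev bounedness} exploits -- and even there one only gets $H^s$-bounds for $s>0$, not $L^p$ or $L^\infty$ bounds at the stated order. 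For $\sigma^{\flat}$, the $\eta$-decay available from finitely many $x$-derivatives is $|\widehat{\sigma^{\flat}}(\eta,\xi)|\lesssim |\eta|^{-N}\langle\xi\rangle^{m+N}$, which on the region $|\eta|\approx\varepsilon\langle\xi\rangle$ gives no gain in $\langle\xi\rangle$ at all (only a constant $\varepsilon^{-N}$), and Lemma \ref{lem:non-stationary} gives no decay near the stationary set $y=\nabla_\xi\varphi(x,\xi)$; hence the asserted uniform $L^1_y$-integrability of $K^{\flat}(x,\cdot)$ does not follow, and no choice of the $\xi$-independent parameter $\varepsilon$ can repair this.

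Two further points. Even granting both endpoints, Riesz--Thorin with a fixed amplitude does not close the case $2<p<\infty$: the admissible order at $L^2$ is $m<n(\rho-1)/2$, which is strictly smaller than the target $p$-threshold, so one must interpolate an analytic family (e.g.\ $a_z=\langle\xi\rangle^{z}a$) rather than a single operator; your sketch glosses over this. Finally, note that the paper does not prove Theorem \ref{thm:LpResultforbidden} at all: it is quoted for completeness from \cite[Theorem 2.17]{DS} (and \cite{KS} for $n=1$), so there is no in-paper argument your proposal could be matching; moreover the displayed threshold appears to carry a sign discrepancy with the introduction, where for $\rho=1$ the sharp condition is $m<-(n-1)|1/p-1/2|$, so ``landing precisely on the stated threshold'' is not by itself evidence that the scheme is sound.
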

\begin{proof}
See \cite[Propositions 2.3 and 2.5]{KS} for the case $n=1$, which is essentially the pseudodifferential case, and \cite[Theorem 2.17]{DS} for $n\geq 2$.
\end{proof}

\section{Sobolev space boundedness of FIOs with \texorpdfstring{$S^{0}_{1,1}$}\ -amplitudes}\label{Sobolev bounedness}

It turns out that just as in the case of pseudodifferential operators, the FIOs with forbidden amplitudes, say in $S^0_{1,1}(\Rn)$, despite failing to be $L^2$-bounded are bounded on $H^s(\Rn)$ with $s>0$. As was mentioned in the introduction, the proof of the Sobolev-boundedness in the pseudodifferential case goes back to E. Stein and independently to Y. Meyer. Other proofs were given by Bourdaud {\cite{Bourd}} and H\"ormander {\cite{Hormander2}}. Following Bourdaud, we establish the Sobolev boundedness of FIOs with amplitudes in the class $S^{0}_{1,1}(\R^n)$, as a consequence of the following more general result.

\begin{Th}\label{thm:sobolev}
Let $n\geq 1$, $a\in C^{r}_{*}S^0_{1,1}(\Rn)$ for some {$r>0 $} and $\varphi$ be an $\mathrm{SND}$ phase function in the class $\Phi^2$. Then for $0<s<r$ the \emph{FIO} $T_a^\varphi$ is bounded from the Sobolev space $H^{s}(\R^n)$ to $H^{s}(\R^n)$.
\end{Th}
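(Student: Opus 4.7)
The plan is to adapt Bourdaud's paradifferential approach---originally designed for pseudodifferential operators with $S^0_{1,1}$-symbols---to the present FIO setting. The three main tools are the composition formula of Theorem \ref{thm: main composition}, the Sobolev boundedness result of Lemma \ref{sobolevtheorem}, and the Littlewood-Paley characterisation of $H^s$ recorded in Lemma \ref{lem:generalisedTL}.

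First I would perform a paradifferential decomposition of $a$ in the $x$-variable. Pick $\Phi \in C^\infty_c(\R^n)$ with $\Phi=1$ on $|\eta|\leq 1$ and $\mathrm{supp}\,\Phi \subset \{|\eta|\leq 2\}$, fix a small $\varepsilon>0$, and split $a = a^\# + a^\flat$ with
\[
a^\#(x,\xi) := \sum_{k\geq 0} \bigl[\Phi(\varepsilon^{-1}2^{-k}D_x)\, a(\cdot,\xi)\bigr](x)\,\psi_k(\xi),\qquad a^\flat := a-a^\#.
\]
In $a^\#$ each $\xi$-band retains only $x$-frequencies $|\eta|\leq 2\varepsilon 2^k \ll |\xi|$, whereas $a^\flat$ collects the blocks whose $x$-frequency dominates or is comparable to $|\xi|$. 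For the easy part $a^\#$, Bernstein's inequality combined with $\|a(\cdot,\xi)\|_{L^\infty}\lesssim 1$ (from $a\in S^0_{1,1}$) gives $a^\# \in S^0_{1,\delta}(\R^n)$ for some $\delta\in(0,1)$; composition with Bessel potentials via Theorem \ref{thm: main composition} reduces the $H^s$-boundedness of $T^\varphi_{a^\#}$ to $L^2$-boundedness of an FIO with an $S^0_{1,\delta}$-amplitude, supplied by Proposition \ref{basicL2}. A sharper cutoff that keeps $a^\#$ in $S^0_{1,0}$ would allow a direct application of Lemma \ref{sobolevtheorem} instead.

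The core of the proof is the treatment of $a^\flat = \sum_k b_k$, where $b_k(x,\xi) := [\mathrm{Id}-\Phi(\varepsilon^{-1}2^{-k}D_x)]a(\cdot,\xi)(x)\,\psi_k(\xi)$. The Zygmund characterisation in Definition \ref{def:Zygmund} applied to $a\in C^r_*S^0_{1,1}$ yields $\|\psi_j(D_x) \partial_\xi^\alpha a(\cdot,\xi)\|_{L^\infty}\lesssim 2^{-jr}\langle\xi\rangle^{r-|\alpha|}$. Summing over the $x$-scales $2^j \gtrsim \varepsilon 2^k$ selected by $b_k$ and using $\langle\xi\rangle\sim 2^k$ on $\mathrm{supp}\,\psi_k$ then produces the uniform estimate $\|\partial_\xi^\alpha b_k(\cdot,\xi)\|_{L^\infty}\lesssim 2^{-k|\alpha|}$. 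To apply Lemma \ref{lem:generalisedTL} I would further split $b_k=\sum_{j\geq k-C} b_{k,j}$ into $x$-Littlewood-Paley blocks $b_{k,j}:=\psi_j(D_x) b_k$, show that the Fourier spectrum of $T^\varphi_{b_{k,j}} f$ sits in a ball of radius $\lesssim 2^{\max(j,k)}$ (using the phase estimate $|\nabla_x\varphi(x,\xi)|\lesssim|\xi|\sim 2^k$ from the class $\Phi^2$ to combine the $x$-frequency of $b_{k,j}$ with the frequency introduced by the phase), and prove the uniform bound $\|T^\varphi_{b_{k,j}} f\|_{L^2}\lesssim 2^{-(j-k)r}\|\Psi_k(D)f\|_{L^2}$ by rescaling $\xi\mapsto 2^k\xi$ and applying Theorem \ref{thm: main composition} to reduce to the $L^2$-boundedness of an FIO with amplitude handled by Proposition \ref{basicL2}. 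Summing in $j$ yields the convergent geometric series $\sum_{j\geq k} 2^{(s-r)(j-k)}<\infty$ precisely when $s<r$, and Lemma \ref{lem:generalisedTL} closes the estimate.

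The hardest step will be the combined Fourier-support localisation of $T^\varphi_{b_{k,j}} f$ and the uniform $L^2$-bound. In the pseudodifferential case the phase is $x\cdot\xi$ and multiplication by a symbol with controlled $x$-Fourier support directly yields controlled output spectrum, but here the phase $\varphi$ redistributes frequencies nontrivially, and one must exploit the SND condition together with a non-stationary phase argument in the spirit of Lemma \ref{lem:non-stationary} to dispose of the pieces where the output frequency exceeds $C\cdot 2^{\max(j,k)}$. The interaction between the Zygmund gain $2^{-jr}$ and the Sobolev weight $2^{ks}$ in the Littlewood-Paley square function is precisely where the constraint $s<r$ is consumed, ensuring that the summation converges and completing the bound.
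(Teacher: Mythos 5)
Your flat-part scheme is sound in spirit (it parallels the paper's treatment of the pieces with $j\geq k$, and the constraint $s<r$ enters exactly where you say), but the treatment of the paradifferential part $a^\#$ contains a genuine gap. With the cutoff $\Phi(\varepsilon^{-1}2^{-k}D_x)$, Bernstein's inequality only gives $\|\partial_x^\beta \Phi(\varepsilon^{-1}2^{-k}D_x)a(\cdot,\xi)\|_{L^\infty}\lesssim (\varepsilon 2^k)^{|\beta|}\approx \varepsilon^{|\beta|}\langle\xi\rangle^{|\beta|}$ on the band $|\xi|\sim 2^k$, so $a^\#$ lies in $S^0_{1,1}(\R^n)$ (with small constants), \emph{not} in $S^0_{1,\delta}(\R^n)$ for any $\delta<1$; a symbol like $\sum_k e^{i\varepsilon 2^k x_1}\psi_k(\xi)$ shows the full loss $2^{k|\beta|}$ is unavoidable. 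Consequently neither Proposition \ref{basicL2} (which for $\delta=1$ requires order $m<n(\rho-1)/2<0$, so order $0$ is excluded) nor Lemma \ref{sobolevtheorem} applies to $T^\varphi_{a^\#}$, and your ``easy part'' is in fact the heart of the theorem: it is precisely the paradifferential piece, whose $H^s$-boundedness requires proving that the FIO essentially preserves the Littlewood--Paley localisation. That is what the paper's proof supplies, by reducing to elementary amplitudes $\sum_k M_k(x)\psi_k(\xi)$ and using the parameter-dependent composition formula (Proposition \ref{thm:monsteriosity}) to write $T^\varphi_1 f_k=\tilde\psi_k(D)T^\varphi_1 f_k+\text{lower-order FIOs}+\text{remainder}$, followed by square-function/almost-orthogonality estimates, commutator bounds via the maximal function, and a $TT^*$ kernel argument based on Lemma \ref{lem:non-stationary}.

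Nor can you escape by sharpening the cutoff: truncating the $x$-frequencies at $2^{\delta k}$ (to force $a^\#\in S^0_{1,\delta}$) or at $O(1)$ (to force $S^0_{1,0}$) pushes into $a^\flat$ all blocks with $2^{\delta k}\lesssim 2^j\ll 2^k$, where the Zygmund estimate gives only $\|b_{k,j}\|_{L^\infty}\lesssim\min\bigl(1,2^{(k-j)r}\bigr)=1$ --- no decay --- so the geometric series $\sum_j 2^{(s-r)(j-k)}$ no longer controls the sum, and one is back to needing uniform, almost-orthogonal $L^2$ bounds for $S^0_{1,1}$-type band pieces, i.e.\ the same machinery again. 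Two further points in your flat-part argument also need repair along the paper's lines: the spectrum of $T^\varphi_{b_{k,j}}f$ is only \emph{essentially} contained in $\{|\eta|\lesssim 2^{\max(j,k)}\}$, with tails that must be carried as explicit lower-order and remainder terms (this is what Steps 3--4 of the paper's proof handle); and the uniform $L^2$ bound for the band pieces cannot be reduced by rescaling to Proposition \ref{basicL2} (again $\delta=1$), but follows instead from a $TT^*$ argument with kernel estimates exploiting only $\xi$-derivatives on a fixed band. As written, therefore, the argument does not close; filling the $a^\#$ gap would essentially force you to reproduce the paper's decomposition.
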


\begin{proof}
We divide the proof into steps. \\

\textbf{Step 1 - Reduction of the FIOs with amplitudes in  $\mathbf{C^{r}_{*}S^0_{1,1}(\Rn)}$ class}\\
As was done in \cite{Bourd, Meyer}, it is enough to show the result for elementary amplitudes in the class $C_{*}^{r} S_{1, 1}^{0}(\Rn)$ where $r$ can be taken as any arbitrary positive number.

By definition, an elementary symbol in $C_{*}^{r} S_{1, 1}^{0}(\Rn)$ is of the form
\begin{equation}\label{decomposition of the amplitude}
a(x, \xi)=\sum_{k=0}^{\infty} M_{k}(x)\, \psi_{k}(\xi),
\end{equation}
 where $\psi_{k}$ was introduced in Definition \ref{def:LP}
 and $M_{k}(x)$ satisfies
\begin{equation}\label{estimates for Qk}
\left|M_{k}(x)\right|\lesssim 1,\qquad\left\|M_{k}\right\|_{C_{*}^{r}(\Rn)} \lesssim  2^{k r},
\end{equation}
where the $C_{*}^{r}$-norm is given in Definition \ref{def:Zygmund}. \\

We treat the case $k=0$ (the low frequency portion of the FIO) separately, so for now assume that $k\geq 1$.\\

Using the Littlewood-Paley partition of unity $I=\sum_{j=0}^\infty \psi_{j} (D)$ and setting
$$M_{kj}(x):= \psi_j (D) M_k(x),
\quad
f_k:= \psi_k(D) f
\quad \text{and} \quad
F_k:= T_{1}^\varphi f_k,$$
(the amplitude of the FIO $T_{1}^\varphi$ is identically equal to one)  we have that for $k\geq 1$
\begin{equation}\label{estimates for Qk and Fk}
\Vert M_{kj}\Vert_{L^\infty(\R^n)}\lesssim 2^{r(k-j)}
\quad \text{and} \quad
\left\|F_k\right\|_{L^{2}(\R^n)} \lesssim  \Vert f_k\Vert_{L^2(\R^n)}.
\end{equation}

\quad\\
The first estimate in \eqref{estimates for Qk and Fk} can be shown using the bound $\left\|M_{k}\right\|_{C_{*}^{r}(\Rn)} \lesssim  2^{k r}$, relations \eqref{Zygmundegenskap} and that $$\int_{\Rn} 2^{jn}\, \psi^\vee\left(\frac{y}{2^{-j}}\right)  \dd y= \int_{\Rn} \psi^\vee\left(y\right) \dd y = 0,$$
as follows
\begin{align*}
|\psi_j(D)(M_k)(x)|
& \lesssim\Big|\int_{\Rn} 2^{jn}\, \psi^\vee\Big(\frac{y}{2^{-j}}\Big)\,
\Big(M_k(x-y)-M_k (x)\Big) \dd y\Big| \\
& \lesssim {2^{-jr+kr}} \int_{\Rn}\Big|2^{jn} \psi^\vee \Big(\frac{y}{2^{-j}}\Big)\Big|\, \frac{|y|^r}{2^{-jr}} \dd y \lesssim 2^{(k-j)r},
\end{align*}
for $j>0$. For $j=0$ this is a consequence of the $L^\infty$-boundedness of $\psi_0(D)$ and the first estimate in \eqref{estimates for Qk}.\\

The second estimate in \eqref{estimates for Qk and Fk} is of course a direct consequence of the $L^2$-boundedness of FIOs with amplitudes in $S^{0}_{1,0}(\R^n).$\\

Using the above notation we can now decompose $T^{\varphi}_{a}$ as
\begin{align*}
    T^{\varphi}_{a} f(x)= \sum_{k=1}^\infty    M_k(x)\, T_{1}^\varphi f_k(x)=\sum_{k=1}^\infty M_k(x)\,  F_k (x).
\end{align*}
At this point, taking into account the properties of the SND phase function $\varphi\in \Phi^2$ (i.e. $|\nabla_x \varphi(x, \xi)|\approx |\xi|$), Proposition \ref{thm:monsteriosity} and choosing a suitable annulus supported $\tilde{\psi}$ we have
\begin{align*}
F_k(x)
& = \int_{\Rn} \tilde{\psi}_k(\nabla_x\varphi(x,\xi))\, e^{i\varphi(x,\xi)} \,\widehat{f_k}(\xi) \ddd \xi \\
\nonumber
& = \tilde{\psi}_k(D)F_k(x)
- \sum_{{0<|\alpha|< N_1}} \frac{2^{-k\eps|\alpha|}}{\alpha!}\,T^{\varphi}_{\sigma_{\alpha,k}} f_k(x)
- 2^{-k\eps N_1}\,T^\varphi_{r_k} f_k(x) \\
\nonumber & =:  F^{1}_k(x) + F^{2}_k(x)+ F^{3}_k(x),
\end{align*}
with
\begin{equation*}
\left |\partial^{\beta}_{\xi} \partial^{\gamma}_{x} \sigma_{\alpha,k}(x,\xi) \right |
\lesssim \bra{\xi}^{-(1/2-\eps)|\alpha|-\abs\beta}, \qquad |\alpha|\geq 0,
\end{equation*}
\begin{equation*}
\supp_\xi \sigma_{\alpha,k}(x,\xi)
= \left \{ \xi\in\R^n: \ C_1 2^k\leq |\xi| \leq C_2 2^k \right \},
\end{equation*}
and
\begin{equation*}
\abs{\partial^{\beta}_{\xi} \partial^{\gamma}_{x}  r_k (x,\xi)}
\lesssim  \,\bra{\xi}^{-(1/2-\eps)N_1-\abs\beta},
 \end{equation*}
where the estimates above are uniform in $k$.

\quad \\
Thus
\begin{equation*}
     T^{\varphi}_{a} f(x)= \sum_{k=1}^\infty M_k(x)\,  F^{1}_k (x)+\sum_{k=1}^\infty M_k(x)\,  F^{2}_k (x)+\sum_{k=1}^\infty M_k(x)\,  F^{3}_k (x) .
\end{equation*}

\quad \\

\textbf{Step 2 - Analysis of $\mathbf{\sum_{k=1}^\infty M_k(x)\,  F^{1}_k (x)}$}\\
Now to analyse $F^{1}_k$ we write
$$M_k = \sum_{j=0}^\infty \psi_j(D) M_k =:\sum_{j=0}^\infty  M_{kj}$$
and split the sum in $j$ into the following pieces
\m{
\sum_{k=1}^\infty \sum_{j=0}^\infty  M_{kj}(x)\, F_k^1 (x)= \sum_{k=1}^{\infty} \sum_{j=0}^{k-1}  M_{kj}(x)\, F_k^1 (x) +\sum_{k=1}^{\infty} \sum_{j=k}^{\infty}  M_{kj}(x)\, F_k^1(x) =: \bf{A} +\bf{B}.
}

Firstly, we establish the $H^s$-boundedness of $\bf{A}$. To this end we have
$$ \sum_{k=1}^{\infty} \sum_{j=0}^{k-1}  M_{kj}(x)\, F^{1}_{k} (x)=:\sum_{k=1}^{\infty }b_k(x). $$

The Fourier transform of $b_k$ is given by
\begin{equation*}
    \widehat{b_k}(\eta)= \sum_{j=0}^{k-1}\int_{\Rn} \psi_j(\eta-\xi)\,\widehat{ M_k}(\eta-\xi)\,    \tilde{\psi_k}(\xi)\,\widehat{F^1_k}(\xi)  \ddd \xi.
\end{equation*}
From this and what we know about the support of convolutions, it follows that spectrum of $b_k$ is contained in an annulus $|\eta|\approx 2^k$. From this and Lemma \ref{lem:generalisedTL} it follows that for $s>0$,
\begin{align*}\label{sobnorm of A}
\norm{ \sum_{k=1}^{\infty }b_k }_{H^s(\R^n)}
& \lesssim
\Big\|\Big\{\sum_{k=1}^{\infty} 4^{k s}\,\Big|\sum_{j=0}^{k-1} M_{k j}\, F_{k}^1\Big|^{2}\Big\}^{1 / 2}\Big\|_{L^{2}(\R^n)}\\
& \lesssim
\Big\|\Big\{\sum_{k=1}^{\infty} 4^{k s}\,\|M_{k}\|_{L^{\infty}(\R^n)}^{2}\,|F_{k}^1|^{2}\Big\}^{1 / 2}\Big\|_{L^{2}(\R^n)} \\
& \lesssim
\Big\|\Big\{\sum_{k=1}^{\infty} 4^{k s}\,|F^1_{k}|^{2}\Big\}^{1/2}\Big\|_{L^{2}(\R^n)}
 = \Big\{\sum_{k=1}^{\infty} 4^{k s}\,\Vert F_{k}^1\Vert_{L^2(\R^n)}^{2}\Big\}^{1/2}\\
& \lesssim
\Big\{\sum_{k=1}^{\infty} 4^{k s}\,\Vert f_{k}\Vert_{L^2 (\R^n)}^{2}\Big\}^{1/2}
\lesssim
\Vert f\Vert_{H^{s} (\R^n)},
\end{align*}
where we have used Remark \ref{rem:sobolevnormer} and that
$$ \Big|\sum_{j=0}^{k-1} M_{kj}\Big|\lesssim \Vert M_k\Vert_{L^\infty(\Rn)}\lesssim  1.$$

Now for term $\mathbf{B}$ in $F^1_k$ applying Lemma \ref{lem:generalisedTL} to
$$h_j(x):=\sum_{k=1}^{j}  M_{kj}(x)\, F_k^1(x),$$
we obtain (using Fubini's theorem for sums, \eqref{estimates for Qk and Fk} and Young's inequality for discrete convolutions)
\begin{align*}
&\norm{ \sum_{k=1}^{\infty} \sum_{j=k}^{\infty} M_{kj} \,F_k^1}_{H^s(\Rn)}
 \lesssim \norm{ \sum_{j=1}^{\infty} \sum_{k=1}^{j} M_{kj} \,F_k^1}_{H^s(\Rn)}
\\
&\qquad=\norm{ \sum_{j=1}^{\infty} h_j}_{H^s(\Rn)}  \lesssim \norm{ \set{\sum_{j=1}^\infty 4^{js}\,\abs{h_j}^2}^{1/2}}_{L^2(\Rn)}
\\
&\qquad\lesssim \norm{ \set{\sum_{j=1}^\infty 4^{js}\,\brkt{\sum_{k=1}^{j} 2^{(k-j)r}\, \abs{ F_k^1}}^2}^{1/2}}_{L^2(\Rn)}
\\
&\qquad= \norm{\set{\sum_{j=1}^\infty \brkt{2^{(\cdot)(s-r)}*2^{(\cdot)s}F_{(\cdot)}^1(j)}^2}^{1/2} }_{L^2(\Rn)}
\\
&\qquad\lesssim \norm{ \set{\sum_{j=1}^\infty2^{j(s-r)}}\set{\sum_{j=1}^\infty \brkt{2^{js}\,\tilde{\psi}_{j}(D)F_{j}}^2}^{1/2}}_{L^2(\Rn)}
\\
&\qquad \lesssim \|f\|_{H^s(\Rn)},
\end{align*}
because of Remarks \ref{rem:Sobolev}, \ref{rem:sobolevnormer} and $r>s$.\\

\textbf{Step 3 - Analysis of $\mathbf{\sum_{k=1}^\infty M_k(x)\,  F^{2}_k (x)}$}\\
To establish the Sobolev boundedness for the term $\sum_{k=1}^\infty M_k(x)\,  F^{2}_k (x)$,
we would like to understand the action of the Littlewood-Paley operator $\psi_j(D)$ on  this term in order to use Definition \ref{def:Sobolev} together with Remark \ref{rem:sobolevnormer}.\\

Then for some integer $N_2>s$ and $0<\eps'<1/2$, write
\begin{equation}\label{FK2sobolev}
\begin{split}
\psi_j(D) \left( M_k(x)\, F^2_{k}\right)
 = M_k(x)\, \psi_j(D)F^2_k(x)+ [ \psi_j(D), M_k] F^2_{k}(x)
=\\   M_k(x)\, \sum_{{0<|\alpha|< N_1}}  \frac{2^{-k\eps|\alpha|}}{\alpha!}\Big(\sum_{{|\beta|<N_2}} \frac{2^{-j\eps'|\beta|}}{\beta!}
T^{\varphi}_{\sigma_{\alpha, \beta,k, j}}
+ 2^{-j\eps' N_2}T^\varphi_{r_{j,k}}\Big) f_k(x)\\+ [ \psi_j(D), M_k] F_{k}^2 
  =: \mathrm{I}+\mathrm{II}+\mathrm{III}
\end{split}
\end{equation}
 with
\begin{equation*}
\left |\partial^{\delta}_{\xi} \partial^{\gamma}_{x} \sigma_{\alpha,\beta, k,j}(x,\xi) \right |
\lesssim \bra{\xi}^{-(1/2-\eps)|\alpha|-(1/2-\eps')|\beta|-\abs\delta}, \qquad |\alpha|> 0, \quad |\beta|\geq 0,
\end{equation*}
\begin{equation*}
\supp_\xi \sigma_{\alpha,\beta,k,j}(x,\xi)
= \left \{ \xi\in\R^n: \ C_1 2^j\leq |\xi| \leq C_2 2^j \right \},
\end{equation*}
and
\begin{equation*}
\abs{\partial^{\delta}_{\xi} \partial^{\gamma}_{x}  r_{j,k}(x,\xi)}
\lesssim  \,\bra{\xi}^{-(1/2-\eps)N_1-(1/2-\eps')N_2-\abs\delta},
\end{equation*}
where both estimates above are uniform in $j$ and $k$.
Moreover, {since in the decomposition \eqref {decomposition of the amplitude} of $a(x,\xi)$, we are at present considering the parts supported outside a neighbourhood of the origin in the $\xi$-variable}, i.e. those for which $k\geq 1$, we also have that $r_{j,k}(x,\xi)$ also vanishes in a neighbourhood of $\xi=0$.\\

For term $\mathrm{I}$, and in light of the support properties of $\sigma_{\alpha,\beta,k,j}$, we claim that (uniformly in $j$ and $k$)
\begin{equation} \label{eq:ttstar argument}
\begin{split}
\|T^\varphi_{\sigma_{\alpha,\beta,k,j}} f\|_ {L^2(\R^n)}
\lesssim \|\Psi_j(D)f\|_{L^2(\R^n)},
\end{split}
\end{equation}
where $\Psi_j$ is a Littlewood-Paley-type frequency localisation that is equal to one on the support of $\sigma_{\alpha,\beta,k,j}.$ Therefore $T^\varphi_{\sigma_{\alpha,\beta,k,j}} f= T^\varphi_{\sigma_{\alpha,\beta,k,j}} \Psi_j(D) f,$ and it is enough to show that
\begin{equation} \label{eq:ttstar argument2}
\begin{split}
\|T^\varphi_{\sigma_{\alpha,\beta,k,j}} f\|_ {L^2(\R^n)}
\lesssim \|f\|_{L^2(\R^n)},
\end{split}
\end{equation}
uniformly in $k$ and $j$.
To see this, we proceed by studying the boundedness of $S_j:=T^\varphi_{\sigma_{\alpha,\beta,k,j}}(T^\varphi_{\sigma_{\alpha,\beta,k,j}})^{\ast}$.  A simple calculation shows that
$$ S_j f(x)=\int_{\R^n} K_j(x,y)\, f(y)\dd y,$$
with
 \begin{equation*}
 K_j(x,y):=\int_{\R^n} e^{i\varphi(x,\xi)-i\varphi(y,\xi)}\,  \,\sigma_{\alpha,\beta,k,j}(x,\xi)\, \overline{\sigma_{\alpha,\beta,k,j}(y,\xi)}\ddd\xi .
 \end{equation*}
Now since $\varphi$ is homogeneous of degree one in the $\xi$ variable, $K_j (x,y)$ can be written as
\begin{equation*}
K_{j}(x,y)=2^{j n}\int_{\R^n} b_{j }(x,y,2^{j }\xi)\,
e^{i2^j  \Phi(x,y,\xi)}\ddd\xi,
\end{equation*}
with
$$\Phi(x,y,\xi)
:= \varphi (x,\xi) -\varphi (y,\xi),$$
and
$$ b_j (x,y,\xi )
:=\sigma_{\alpha,\beta,k,j}(x,\xi)\, \overline{\sigma_{\alpha,\beta,k,j}(y,\xi)}.$$
Observe that the $\xi$-support of $b_{j }(x,y,2^{j }\xi)$ lies in the compact set $\mathcal{K}:=\left \{C_1\leq \abs{\xi}\leq C_2\right \}$.  From the SND condition \eqref{eq:SND} it also follows that
\begin{equation}\label{Phi cond 1}
\vert \nabla_{\xi}\Phi (x,y, \xi)\vert \approx \vert x-y\vert, \quad \text{for any $x,y\in \R^n$ and $\xi\in \mathcal{K}$}.
\end{equation}
Assume that $N_3>n$ is an integer, fix $x\neq y$ and set $\phi(\xi):=\Phi (x,y, \xi)$, $\vartheta:=\abs{\nabla_\xi \phi}^2$. By the mean value theorem, \eqref{C_alpha} and \eqref{Phi cond 1}, for any multi-index $\alpha$ with $\abs{\alpha}\geq 1$ and any $\xi\in \mathcal{K}$,
\[
    \abs{\d^\alpha_{\xi} \phi(\xi)}\lesssim \vert \nabla_{\xi}\Phi (x,y, \xi)\vert= \vartheta^{1/2}.
\]
On the other hand, since
$$ \d^\alpha_{\xi} \vartheta =\sum_{\nu=1}^n \sum_{\beta\leq\alpha}  \binom{\alpha}{\beta} \d^\beta_{\xi}\d_{\xi_\nu} \phi\, \d^{\alpha-\beta}_{\xi}\d_{\xi_\nu}  \phi,$$
it follows that, for any $\abs{\alpha}\geq 0$, $\abs{\d^\alpha_{\xi} \vartheta}\lesssim \vartheta$. We estimate the kernel $K_j$ in two different ways. For the first estimate, \eqref{Phi cond 1} and Lemma \ref{lem:non-stationary} with $F(\xi):=b_{j } (x,y,2^{j }\xi  ),$ yield
\begin{align}\label{eq:RLSest1}
\vert K_{j }(x,y)\vert
&\leq \,2^{j  n} 2^{-j  N_3}\ C_{M,\mathcal{K}} \sum _{\vert \alpha \vert\leq N_3} 2^{j \abs{\alpha}}\int_{\R^n} {\left \vert \partial^{\alpha}_{\xi} b_{j }(x,y,2^j  \xi) \right \vert\, \big \vert \nabla_{\xi}\Phi(x,y,\xi) \big \vert^{-N_3}} \ddd\xi \nonumber \\
&\lesssim 2^{-j  N_3} \abs{x-y}^{-N_3} \sum _{\vert \alpha \vert\leq N_3}  2^{j \abs{\alpha}} \int_{\R^n} \abs{\partial^{\alpha}_{\xi} b_{j }(x,y,\xi)}\ddd\xi \\ &\lesssim  2^{j n}\,
\Big(2^{j  } \abs{x-y}\Big)^{-N_3}, \nonumber
\end{align}
where the fact that the $\xi$-support of $b_j $ lies in a ball of radius $\approx 2^j $ and that for $|\alpha|\geq 0$
\begin{equation}\label{eq:mk}
	\abs{\partial^{\alpha}_{\xi} b_{j }(x,y,\xi)} \lesssim 2^{-j|\alpha| },
\end{equation} have been used. By \eqref{eq:mk} we also obtain
\begin{equation}\label{eq:RLSest2}
\vert K_{j }(x,y)\vert \leq 2^{jn} \int_{\R^n} \abs{b_{j }(x,y,2^{j }\xi)}\ddd\xi \lesssim 2^{j n},
\end{equation}
and when combining estimates \eqref{eq:RLSest1} and \eqref{eq:RLSest2} one has
\begin{equation}\label{eq:Kk}
	\vert K_{j }(x,y)\vert\lesssim  2^{j n}\,\brkt{1+2^j \abs{x-y}}^{-N_3}.
\end{equation}
Thus, using \eqref{eq:Kk} and Minkowski's inequality we have
\m{
\|S_j  f\|_{L^2(\R^n)}
\lesssim \Big\| \int_{\R^n} 2^{j n}\,\brkt{1+2^j  |y|}^{-N_3}\,f(\,\cdot-y)\dd y \Big\|_{L^2(\R^n)}
\lesssim \|f\|_{L^2(\R^n)}. }

Now the Cauchy-Schwarz inequality yields
\begin{align*}
\|(T^\varphi_{\sigma_{\alpha,\beta,k,j}})^{\ast}f\|^2_{L^2(\R^n)}
& =  \Big\langle {T^\varphi_{\sigma_{\alpha,\beta,k,j}}} {(T^\varphi_{\sigma_{\alpha,\beta,k,j}})^{\ast}}f,f
\Big\rangle_{L^2(\R^n)}  \\
& \lesssim \|S_j f\|_{L^2(\R^n)}
\, \|f\|_{L^2(\R^n)} \\
& \lesssim \|f\|^2_{L^2(\R^n)}.
\end{align*}
Therefore
$$
\|{T^\varphi_{\sigma_{\alpha,\beta,k,j}}}\|_{L^2(\R^n)\to L^2(\R^n)}
=\|{(T^\varphi_{\sigma_{\alpha,\beta,k,j}})^*}\|_{L^2(\R^n)\to L^2(\R^n)}\lesssim 1,$$
and \eqref{eq:ttstar argument2} is proven.\\

Now \eqref{estimates for Qk}, Cauchy-Schwarz inequality, Fubini's theorem, \eqref{eq:ttstar argument}, and the definition of the Sobolev norm yield that
\begin{align}\label{skattning I}
& \sum_{j=0}^{\infty} 4^{js}\norm{  \sum_{k=1}^{\infty}M_k(x)\, \sum_{{0<|\alpha|< N_1}}  \frac{2^{-k\eps|\alpha|}}{\alpha!}\sum_{{|\beta|<N_2}} \frac{2^{-j\eps'|\beta|}}{\beta!}
T^{\varphi}_{\sigma_{\alpha, \beta,k, j}}f_k} ^2_{L^2(\Rn)}\\
&\nonumber \qquad \quad
 \lesssim   \sum_{k=1}^{\infty} 2^{-k\varepsilon}\sum_{j=0}^{\infty} 4^{js}
 \| \Psi_j(D) f_k \|^2_{L^2(\Rn)} \lesssim \sum_{k=1}^{\infty} 2^{-k\varepsilon}\|f_k\|^2_{H^s(\Rn)}\lesssim \|f\|^2_{H^s(\Rn)}.
\end{align}

\quad\\

For term $\mathrm{II}$, we decompose $T^{\varphi}_{r_{j,k}}$ into Littlewood-Paley pieces as follows:
\m{T^{\varphi}_{r_{j,k} } f(x) = \sum_{{\ell=0}}^\infty \int_{\R^n} e^{i\varphi(x,\xi)}\,r_{j,k} (x,\xi)\,\psi_\ell(\xi)\,\widehat{f}(\xi) \ddd\xi =: \sum_{\ell=0}^\infty T^{\varphi}_{r_{j,k,\ell}}f(x),}
where the $\psi_\ell$'s are defined in Definition \ref{def:LP}. By a proof identical to the one of \eqref{eq:ttstar argument}, we see that
\begin{equation*}
\|T^{\varphi}_{r_{j,k,\ell}}f\|_{L^2(\R^n)}
\lesssim
\|\Psi_\ell(D)f\|_{L^2(\R^n)}.
\end{equation*}
Thus
\begin{equation}\label{eq:highfreq6}
\begin{split}
\|T^{\varphi}_{r_{j,k}} f\|_{L^2(\R^n)}
&\lesssim \sum_{\ell =0}^\infty
\|T^{\varphi}_{r_{j,k,\ell }}f\|_{L^2(\R^n)}
\lesssim \sum_{\ell =0}^\infty
\|\Psi_\ell (D)f\|_{L^2(\R^n)}.
\end{split}
\end{equation}
Note that the estimate \eqref{eq:highfreq6} is uniform in $j$.
Then we claim that for $s>0$ one has
\begin{equation*}
T^{\varphi}_{r_{j,k}}:H^s(\R^n)\to L^2(\R^n).
\end{equation*}
Indeed the Cauchy-Schwarz inequality yields
\begin{equation*}
\begin{split}
\|T^{\varphi}_{r_{j,k}} f\|_{L^2(\R^n)}
& \lesssim \sum_{\ell =0}^{\infty }
\|\Psi_\ell (D)f\|_{L^2(\R^n)}
= \sum_{\ell =0}^\infty 2^{-\ell s }\,\brkt{2^{\ell s}\,\|\Psi_\ell (D)f\|_{L^2(\R^n)}}  \\
& \lesssim \Big( \sum_{\ell =0}^\infty 2^{-2\ell
s}\Big)^{1/2}
\Big( \sum_{\ell =0}^\infty 2^{2\ell s}\,\|\Psi_\ell (D)f\|^{2}_{L^2(\R^n)}\Big)^{1/2}
\lesssim \|f\|_{H^{s}(\R^n)}.
\end{split}
\end{equation*}
The last step follows from the definition of $H^s(\R^n)$, see Remark \ref{rem:Sobolev}.
Thus, for $N_2$ large enough,
\begin{align}\label{skattning II}
& \sum_{j=0}^{\infty} 4^{js}\norm{ \sum_{k=1}^{\infty} M_k 2^{-j\eps' N_2} \sum_{{0<|\alpha|< N_1}}  \frac{2^{-k\eps|\alpha|}}{\alpha!}T^\varphi_{r_{j,k}}f_k} ^2_{L^2(\Rn)}\\
&\nonumber \qquad \quad
 \lesssim \Big(\sum_{k=1}^{\infty} 2^{-k\varepsilon}\Big)^2  \sum_{j=0}^{\infty}
4^{j(s-\varepsilon'N_2)}\Vert f\Vert^2_{H^s(\Rn)} \lesssim  \|f\|^2_{H^s(\Rn)}.
\end{align}

\quad\\

For term $\mathrm{III}$ of  \eqref{FK2sobolev}, the second estimate in \eqref{estimates for Qk} and relations \eqref{Zygmundegenskap} yield that
\nm{eq:commutator}{
\left|\left[\psi_j(D), M_k\right](F^{2}_{k})(x)\right|
& \lesssim
 \Big|\int_{\Rn} 2^{jn}\, \psi^\vee\left(\frac{x-y}{2^{-j}}\right)\,(M_k(y)-M_k (x)) \,F^{2}_{k} (y) \dd y\Big|
 \\
 & \lesssim {2^{-jr+kr}} \int_{\Rn}\Big|2^{jn}\, \psi^\vee \Big(\frac{x-y}{2^{-j}}\Big)\Big|\, \frac{|x-y|^r}{2^{-jr}}\,|F^{2}_{k} (y)| \dd y
 \\
 & \lesssim {2^{-jr+kr}} \,\mathcal{M}(F^{2}_{k})(x),
}
for $j>0$, where $\mathcal{M}$ is the Hardy-Littlewood maximal function. For $j=0$ this follows from the $L^\infty$-boundedness of $\psi_0(D)$ and the first estimate of \eqref{estimates for Qk}.\\
Therefore using the
Proposition \ref{basicL2} on $T^{\varphi}_{\sigma_{\alpha,k}}$  and the commutator estimate above we obtain
\m{
\Big\| \sum_{k=1}^{\infty} [\psi_j(D), M_k] F^{2}_{k}\Big\|_{L^2(\R^n)}^2
& \lesssim 4^{-jr}\sum_{0<|\alpha|<N_1} \Big(\sum_{k=1}^{\infty} 2^{k(r-\varepsilon)}\,
\Vert T^{\varphi}_{\sigma_{\alpha,k}} f_k\Vert_{L^2(\R^n)}\Big)^2 \\
& \lesssim 4^{-jr}\, \Big(\sum_{k=1}^{\infty} 2^{k(r-\varepsilon)} \,
    \Vert  f_k\Vert_{L^2(\R^n)}\Big)^2.
}
Hence if $r\in (s, s+\varepsilon)$ and
using Cauchy-Schwarz's inequality we obtain
\begin{align*}
  \sum_{j=0}^\infty 4^{js}\,
  \Big\| \sum_{k=1}^{\infty} [\psi_j(D), M_k] F^{2}_{k}\Big\|_{L^2(\R^n)}^2
  & \leq \sum_{j=0}^\infty   4^{j(s-r)} \,
  \Big(\sum_{k=1}^{\infty} 2^{k(r-\varepsilon)} \, 2^{-ks}\,2^{ks} \, \Vert f_k\Vert_{L^2(\R^n)}\Big)^2   \\
  & \leq \sum_{j=0}^\infty   4^{j(s-r)}\, \Big(\sum_{k=1}^{\infty} 4^{k(r-\varepsilon -s)}\Big)\Big(\sum_{k=1}^{\infty} 4^{ks}\,\Vert f_k\Vert_{L^2(\R^n)}^2\Big)\\
  & \approx \Vert f\Vert^2_{H^s (\R^n)}.
  \end{align*}

  Thus putting this, \eqref{skattning I} and \eqref{skattning II} together we obtain
\begin{equation*}
    \norm{ \sum_{k=1}^{\infty} M_{k} \,F_k^2}_{H^s(\Rn)}\lesssim \Vert f\Vert_{H^s(\Rn)}.
\end{equation*}

\quad\\

\textbf{Step 4 - Analysis of $\mathbf{\sum_{k=1}^\infty M_k(x)\,  F^{3}_k (x)}$}\\
Finally we turn to the Sobolev boundedness of the term
\begin{equation*}
 \sum_{{k=1}}^{\infty} M_{k}(x)\, F^3_k (x).
 \end{equation*}
Once again, using the definition of $F^3_k$ above, we have
\m{\psi_j(D)  M_k(x) F^3_{k}(x)
     & = 2^{-k\eps N_1}  M_k(x)\, \psi_j(D)T^\varphi_{r_k} f_k(x)+2^{-k\eps N_1} [ \psi_j(D), M_k] T^\varphi_{r_k} f_k(x).}
The commutator term can be treated as term $\mathrm{III}$ above since we can choose arbitrarily large decay in $k$. After taking the sum in $k$ of the first term, using \eqref{estimates for Qk}, then taking the $L^2$-norm, multiplying with $4^{js}$ and then taking the $\ell^2$-norm in $j$ one has the estimate

\m{
&\sum_{k=1}^\infty 2^{-k\eps N_1}\,
\brkt{\sum_{j=0}^\infty \Big\| 2^{js}\, M_k(x)\, \psi_j(D)T^\varphi_{r_k} f_k(x)(x)\Big\|^2_{L^2(\Rn)}}^{1/2}\\ &\qquad\lesssim \sum_{k=1}^\infty 2^{-k\eps N_1}\,\brkt{\sum_{j=0}^\infty\norm{ 2^{js}\,  \psi_j(D)T^\varphi_{r_k} f_k(x)(x)}^2_{L^2(\Rn)}}^{1/2} \\
&\qquad\lesssim
\sum_{k=1}^\infty 2^{-k\eps N_1}\,\| T^\varphi_{r_k} f_k(x)\|_{H^s(\Rn)} {\lesssim }
\sum_{k=1}^\infty 2^{-k\eps N_1}\,\|f\|_{H^s(\Rn)}
\lesssim \|f\|_{H^s(\Rn)},
}
where we have also used Lemma \ref{sobolevtheorem}.\\

\textbf{Step 5 - Analysis of the case $k=0$}\\
We have, using the second estimate in \eqref{estimates for Qk}, a similar argument as in \eqref{eq:commutator}, and the Sobolev-boundedness result in Lemma \ref{sobolevtheorem}
\begin{align*}
\brkt{\sum_{j=0}^\infty 4^{js}\, \norm{\psi_j(D) M_0T_1^\varphi f_0 }_{L^2(\Rn)}^2}^{1/2}
& \leq \brkt{\sum_{j=0}^\infty 4^{js}\, \norm{ M_0\,\psi_j(D)T_1^\varphi f_0 }_{L^2(\Rn)}^2}^{1/2} \\
&\qquad +\brkt{\sum_{j=0}^\infty 4^{js}\, \norm{[\psi_j(D), M_0]T_1^\varphi f_0 }_{L^2(\Rn)}^2}^{1/2} \\
& \lesssim \|T_1^\varphi f_0\|_{H^s(\Rn)}
+\brkt{\sum_{j=0}^\infty 4^{j(s-r)}\, \|T_1^\varphi f_0 \|_{L^2(\Rn)}^2}^{1/2} \\
& \lesssim \|f_0\|_{H^s(\Rn)}
+ \|f_0\|_{L^2(\Rn)}
\lesssim \|f\|_{H^s(\Rn)}. \qedhere
\end{align*}

\end{proof}

\quad\\
Now since $S^m_{1,1} (\R^n)\subset C_{*}^{r} S_{1,1}^{m}(\R^n)$ for all $r>0$, we deduce the following:
\begin{Cor}

Let $a\in S^0_{1,1}(\Rn)$ and $\varphi\in\Phi^2$ be an $\mathrm{SND}$ phase function. Then the \emph{FIO} $T_a^\varphi$ is bounded from the Sobolev space $H^{s}(\R^n)$ to $H^{s}(\R^n)$, for all $s>0$.
\end{Cor}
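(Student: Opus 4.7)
The plan is to deduce this corollary as a one-line consequence of Theorem \ref{thm:sobolev}, by exploiting the inclusion $S^{0}_{1,1}(\R^n) \subset C_{*}^{r} S_{1,1}^{0}(\R^n)$ that is already recorded in the paper immediately after Definition \ref{symbolclass ZygSm}. Given $s>0$, I would simply fix any $r > s$ (for definiteness, say $r := s+1$), observe that $a\in S^0_{1,1}(\R^n)$ lies in $C_{*}^{r} S_{1,1}^{0}(\R^n)$ with this choice of $r$, and then invoke Theorem \ref{thm:sobolev} with this $r$, whose hypotheses on the phase ($\varphi\in\Phi^2$, SND) coincide with those of the corollary. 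The conclusion is exactly the claimed $H^s \to H^s$ boundedness.

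The only point one might wish to verify carefully, rather than take for granted, is the stated inclusion $S^{0}_{1,1}(\R^n) \subset C_{*}^{r} S_{1,1}^{0}(\R^n)$ for every $r>0$. For $a\in S^0_{1,1}(\R^n)$ one has $|\partial_\xi^\alpha \partial_x^\beta a(x,\xi)| \lesssim \langle \xi\rangle^{-|\alpha|+|\beta|}$; in particular $\|\partial_\xi^\alpha a(\cdot,\xi)\|_{L^\infty} \lesssim \langle\xi\rangle^{-|\alpha|}$, which is the first estimate required in Definition \ref{symbolclass ZygSm}. For the second, for any $r>0$ one picks an integer $N\geq r$, notes that $\|\partial_\xi^\alpha a(\cdot,\xi)\|_{\mathcal{C}^N} \lesssim \langle\xi\rangle^{-|\alpha|+N}$, and then uses the embedding $\mathcal{C}^N(\R^n) \subset C_*^{r}(\R^n)$ from \eqref{Zygmundegenskap} to obtain $\|\partial_\xi^\alpha a(\cdot,\xi)\|_{C_*^r} \lesssim \langle\xi\rangle^{-|\alpha|+r}$, which is precisely the $\delta=1$ estimate in Definition \ref{symbolclass ZygSm}. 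So there is no real obstacle here — the entire content of the corollary is contained in Theorem \ref{thm:sobolev} once this embedding is noted.
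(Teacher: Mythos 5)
Your proposal coincides with the paper's own proof: the corollary is obtained by fixing any $r>s$ and feeding the inclusion $S^0_{1,1}(\Rn)\subset C_*^r S^0_{1,1}(\Rn)$ (recorded right after Definition \ref{symbolclass ZygSm}) into Theorem \ref{thm:sobolev}, whose phase hypotheses are identical to those of the corollary. The only point to repair is your optional verification of that inclusion: the step in which you bound $\|\partial_\xi^\alpha a(\cdot,\xi)\|_{\mathcal{C}^N}\lesssim\langle\xi\rangle^{-|\alpha|+N}$ and then invoke $\mathcal{C}^N(\Rn)\subset C_*^r(\Rn)$ only gives $\|\partial_\xi^\alpha a(\cdot,\xi)\|_{C_*^r}\lesssim\langle\xi\rangle^{-|\alpha|+N}$, and since $N\geq r$ this exponent is too large for Definition \ref{symbolclass ZygSm}, which requires $\langle\xi\rangle^{-|\alpha|+r}$ when $m=0$, $\delta=1$. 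The correct argument is an interpolation between the $L^\infty$ and $\mathcal{C}^N$ bounds rather than a plain embedding: with $f:=\partial_\xi^\alpha a(\cdot,\xi)$ one has the Bernstein-type estimate $\|\psi_j(D)f\|_{L^\infty(\Rn)}\lesssim\min\bigl(\|f\|_{L^\infty(\Rn)},\,2^{-jN}\max_{|\beta|=N}\|\partial_x^\beta f\|_{L^\infty(\Rn)}\bigr)\lesssim\min\bigl(\langle\xi\rangle^{-|\alpha|},\,2^{-jN}\langle\xi\rangle^{-|\alpha|+N}\bigr)$, so multiplying by $2^{jr}$ and splitting the supremum in $j$ at $2^j\approx\langle\xi\rangle$ yields $\|f\|_{C_*^r(\Rn)}\lesssim\langle\xi\rangle^{-|\alpha|+r}$, which is exactly the $\delta=1$ estimate needed. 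With that one-line fix your deduction is complete and is the same route the paper takes.
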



\end{document}